\def\E{\ifmmode{\mathbb E}\else{$\mathbb E$}\fi} %natural numbers
\def\N{\ifmmode{\mathbb N}\else{$\mathbb N$}\fi} %natural numbers%
\def\R{\ifmmode{\mathbb R}\else{$\mathbb R$}\fi} %real numbers
\def\Q{\ifmmode{\mathbb Q}\else{$\mathbb Q$}\fi} %rational numbers
\def\C{\ifmmode{\mathbb C}\else{$\mathbb C$}\fi} %complex numbers
\def\H{\ifmmode{\mathbb H}\else{$\mathbb H$}\fi} %complex numbers
\def\Z{\ifmmode{\mathbb Z}\else{$\mathbb Z$}\fi} %integers
\def\P{\ifmmode{\mathbb P}\else{$\mathbb P$}\fi} %real numbers
\def\T{\ifmmode{\mathbb T}\else{$\mathbb T$}\fi} %real numbers
\def\SS{\ifmmode{\mathbb S}\else{$\mathbb S$}\fi} %real numbers
\def\DD{\ifmmode{\mathbb D}\else{$\mathbb D$}\fi} %real numbers
\newcommand{\e}{\varepsilon}
\newcommand{\del}{\partial}
\newcommand{\Cont}{{\operatorname{Cont}}}
\newcommand{\ben}{\begin{enumerate}}
\newcommand{\een}{\end{enumerate}}
\newcommand{\be}{\begin{equation}}
\newcommand{\ee}{\end{equation}}
\newcommand{\bea}{\begin{eqnarray}}
\newcommand{\eea}{\end{eqnarray}}
\newcommand{\beastar}{\begin{eqnarray*}}
\newcommand{\eeastar}{\end{eqnarray*}}
\newcommand{\bc}{\begin{center}}
\newcommand{\ec}{\end{center}}
\theoremstyle{theorem}
\newtheorem{thm}{Theorem}[section]
\newtheorem{cor}[thm]{Corollary}
\newtheorem{lem}[thm]{Lemma}
\newtheorem{prop}[thm]{Proposition}
\theoremstyle{definition}
\newtheorem{defn}[thm]{Definition}
\newtheorem{rem}[thm]{Remark}
\newtheorem{notation}[thm]{\rm\bfseries{Notation}}
\newtheorem*{thm*}{Theorem}
\numberwithin{equation}{section}
\def\R{{\mathbb R}}
\def\Crit{{\hbox{Crit}}}
\def\E{{\mathbb E}}
\def\Z{{\mathbb Z}}
\def\C{{\mathbb C}}
\def\R{{\mathbb R}}
\def\P{{\mathbb P}}
\def\N{{\mathbb N}}
\def\11{{\mathbb I}}
\def\delbar{{\overline \partial}}
\def\Fix{{\text{\rm Fix}}}
\def\id{{\text{\rm id}}}
\def\C{\mathbb{C}}
\def\Z{\mathbb{Z}}
\def\T{\mathbb{T}}
\def\Q{\mathbb{Q}}
\def\E{\ifmmode{\mathbb E}\else{$\mathbb E$}\fi} %natural numbers
\def\N{\ifmmode{\mathbb N}\else{$\mathbb N$}\fi} %natural numbers
\def\R{\ifmmode{\mathbb R}\else{$\mathbb R$}\fi} %real numbers
\def\Q{\ifmmode{\mathbb Q}\else{$\mathbb Q$}\fi} %rational numbers
\def\C{\ifmmode{\mathbb C}\else{$\mathbb C$}\fi} %complex numbers
\def\H{\ifmmode{\mathbb H}\else{$\mathbb H$}\fi} %complex numbers
\def\Z{\ifmmode{\mathbb Z}\else{$\mathbb Z$}\fi} %integers
\def\P{\ifmmode{\mathbb P}\else{$\mathbb P$}\fi} %real numbers
\def\SS{\ifmmode{\mathbb S}\else{$\mathbb S$}\fi} %real numbers
\def\DD{\ifmmode{\mathbb D}\else{$\mathbb D$}\fi} %real numbers
\def\R{{\mathbb R}}
\def\Crit{{\hbox{Crit}}}
\def\E{{\mathbb E}}
\def\Z{{\mathbb Z}}
\def\C{{\mathbb C}}
\def\R{{\mathbb R}}
\def\N{{\mathbb N}}
\def\LL{{\mathcal L}}
\def\CL{{\mathcal L}}
\def\MM{{\mathcal M}}
\def\JJ{{\mathcal J}}
\def\FF{{\mathcal F}}
\def\ev{{\text{\rm ev}}}
\def\delbar{{\overline \partial}}
\def\Int{{\text{\rm Int}}}
\def\e{\varepsilon}
\def\CA{{\mathcal A}}
\def\CB{{\mathcal B}}
\def\CC{{\mathcal C}}
\def\CD{{\mathcal D}}
\def\CF{{\mathcal F}}
\def\CH{{\mathcal H}}
\def\CJ{{\mathcal J}}
\def\CL{{\mathcal L}}
\def\CM{{\mathcal M}}
\def\CN{{\mathcal N}}
\def\CP{{\mathcal P}}
\def\CP{{\mathcal P}}
\def\CV{{\mathcal V}}
\def\CW{{\mathcal W}}
\def\supp{\operatorname{supp}}
\def\End{\operatorname{End}}
\def\coker{\operatorname{Coker}}
\def\span{\operatorname{span}}
\def\Image{\operatorname{Image}}
\def\Ev{\text{\rm Ev}}
\def\mq{\mathfrak{q}}
\def\mp{\mathfrak{p}}
\def\mH{\mathfrak{H}}
\def\mh{\mathfrak{h}}
\def\ma{\mathfrak{a}}
\def\ms{\mathfrak{s}}
\def\mm{\mathfrak{m}}
\def\mn{\mathfrak{n}}
\def\mz{\mathfrak{z}}
\def\mw{\mathfrak{w}}
\def\Hoch{{\tt Hoch}}
\def\mt{\mathfrak{t}}
\def\ml{\mathfrak{l}}
\def\mT{\mathfrak{T}}
\def\mL{\mathfrak{L}}
\def\mg{\mathfrak{g}}
\def\md{\mathfrak{d}}
\def\mr{\mathfrak{r}}
\def\End{\operatorname{End}}
\def\dim{\operatorname{dim}}
\def\supp{\operatorname{supp}}
\def\End{\operatorname{End}}
\def\coker{\operatorname{Coker}}
\def\span{\operatorname{span}}
\def\Cont{\operatorname{Cont}}
\def\Crit{\operatorname{Crit}}
\def\Spec{\operatorname{Spec}}
\def\Sing{\operatorname{Sing}}
\def\GFQI{\frak{G}}
\def\Index{\operatorname{Index}}
\def\Image{\operatorname{Image}}
\def\ev{\operatorname{ev}}
\def\Int{\operatorname{Int}}
\def\ben{\begin{enumerate}}
\def\een{\end{enumerate}}
\def\be{\begin{equation}}
\def\ee{\end{equation}}
\def\bea{\begin{eqnarray}}
\def\eea{\end{eqnarray}}
\def\beastar{\begin{eqnarray*}}
\def\eeastar{\end{eqnarray*}}
\def\bc{\begin{center}}
\def\ec{\end{center}}
\begin{document}

\quad \vskip1.375truein

\def\mq{\mathfrak{q}}
\def\mp{\mathfrak{p}}
\def\mH{\mathfrak{H}}
\def\mh{\mathfrak{h}}
\def\ma{\mathfrak{a}}
\def\ms{\mathfrak{s}}
\def\mm{\mathfrak{m}}
\def\mn{\mathfrak{n}}
\def\mz{\mathfrak{z}}
\def\mw{\mathfrak{w}}
\def\Hoch{{\tt Hoch}}
\def\mt{\mathfrak{t}}
\def\ml{\mathfrak{l}}
\def\mT{\mathfrak{T}}
\def\mL{\mathfrak{L}}
\def\mg{\mathfrak{g}}
\def\md{\mathfrak{d}}
\def\mr{\mathfrak{r}}
\def\Cont{\operatorname{Cont}}
\def\Crit{\operatorname{Crit}}
\def\Spec{\operatorname{Spec}}
\def\Sing{\operatorname{Sing}}
\def\GFQI{\text{\rm g.f.q.i.}}
\def\Index{\operatorname{Index}}
\def\Cross{\operatorname{Cross}}
\def\Ham{\operatorname{Ham}}
\def\Fix{\operatorname{Fix}}
\def\Graph{\operatorname{Graph}}
\def\id{\text\rm{id}}

\title[Fredholm theory of contact instantons]
{Bordered contact instantons and their Fredholm theory and generic transversalities}

\author{Yong-Geun Oh}
\address{Center for Geometry and Physics, Institute for Basic Science (IBS),
77 Cheongam-ro, Nam-gu, Pohang-si, Gyeongsangbuk-do, Korea 37673
\& POSTECH, Gyeongsangbuk-do, Korea}
\email{yongoh1@postech.ac.kr}
\thanks{This work is supported by the IBS project \# IBS-R003-D1}

\begin{abstract}  In this article, we first establish the Fredholm theory
 for the bordered contact instantons  defined on the punctured Riemann surfaces
 with prescribed asymptotic  condition near the boundary punctures.
 We then prove the generic mapping transversality under the perturbation of
 Legendrian boundary condition. We also establish their generic (0-jet) evaluation
 transversality results of their moduli space under the perturbations of
 CR almost complex structures and of Legendrian boundary conditions. These
 are fundamental ingredients of the construction of the moduli space of
 bordered contact instantons and their applications.
\end{abstract}

\keywords{Contact instantons, Fredholm theory, mapping transversality,
perturbation of Legendrian boundary condition,
 evaluation transversality}
\subjclass[2010]{Primary 53D42; Secondary 58J32}

\maketitle

\begin{center}
In memory of Bumsig Kim
\end{center}

\tableofcontents

\section{Introduction}

A contact manifold $(M,\xi)$ is a $2n+1$ dimensional manifold
equipped with a completely non-integrable distribution of rank $2n$,
called a contact structure. Complete non-integrability of $\xi$
can be (locally) expressed by the non-vanishing property
$$
\lambda \wedge (d\lambda)^n \neq 0
$$
for a one-form $\lambda$. When $\xi$ is coorientable, we can choose such a one-form globally
so that $\ker \lambda = \xi$ is called a contact form associated to $\xi$.
In the present article, we will always assume that $(M,\xi)$ is coorientable equipped with
a coorientation without mentioning further.

Each contact form $\lambda$ of $\xi$ canonically induces a splitting
$$
TM = \R\langle R_\lambda\rangle \oplus \xi.
$$
Here $R_\lambda$ is the Reeb vector field of $\lambda$,
which is uniquely determined by the equations
$$
R_\lambda \rfloor \lambda \equiv 1, \quad R_\lambda \rfloor d\lambda \equiv 0.
$$
We denote by $\Pi=\Pi_\lambda: TM \to TM$ the idempotent, i.e., an endomorphism satisfying
$\Pi^2 = \Pi$ such that $\ker \Pi = \R \langle R_\lambda\rangle$ and $\operatorname{Im} \Pi = \xi$.
Denote by $\pi=\pi_\lambda: TM \to \xi$ the associated projection.

\begin{defn} Let $(M,\xi)$ be a contact manifold and let $\lambda$ be a contact form of $\xi$.
Let $J \in \End(TM)$ be an endomorphism satisfying $J^2 = - \Pi$ such that
$d\lambda(\cdot,J\cdot)$ is positive definite on $\xi$. We say that such $J$ is \emph{adapted to} $\lambda$.
 We define the set
\be\label{eq:JJQlambda}
\CJ_\lambda(M,\xi) = \{J :\xi \to \xi \mid J^2 = - \Pi, \, J \, \text{adapted to } \, \lambda\}
\ee
\end{defn}
Following \cite{oh-wang:CR-map1}, we call any such triple $(M,\lambda,J)$ a contact triad
 of $(M,\xi)$.
For each given contact triad, we equip $M$ with the triad metric
$$
g = d\lambda(\cdot, J \cdot) + \lambda \otimes \lambda.
$$

We denote by $\Sigma$ be a compact Riemann surface $(\Sigma, j)$ with or without
boundary, and denote by $\dot \Sigma$ a punctured Riemann surface with
a finite number of punctures which may be either from the interior or from the boundary of $\Sigma$.

For a given map $w: \dot \Sigma \to M$,
we can decompose its derivative
$du$, regarded as a $w^*TM$-valued one-form on $\dot \Sigma$, into
\be\label{eq:du}
dw = d^\pi w + w^*\lambda \otimes R_\lambda
\ee
 where $d^\pi w := \pi dw$. Furthermore $d^\pi w$ is decomposed into
\be\label{eq:dpiu}
d^\pi w = \delbar^\pi w + \del^\pi w
\ee
where $\delbar^\pi w: = (dw^\pi)_J^{(0,1)}$ (resp. $\del^\pi w: = (dw^\pi)_J^{(1,0)}$) is
the anti-complex linear part (resp. the complex linear part) of $d^\pi w: (T\dot \Sigma, j) \to (\xi,J|_\xi)$.
(For the simplicity of notation, we will abuse our notation by often denoting $J|_\xi$ by $J$.
We also simply write $((\cdot)^\pi)_J^{(0,1)} = (\cdot)^{\pi(0,1)}$ and
 $((\cdot)^\pi)_J^{(1,0)} = (\cdot)^{\pi(1,0)}$ in general, unless there is a reason to
 specify $J$ in notation.)

\begin{defn}[Contact instanton] Let $\Sigma$ be as above.
We call a pair of $(j,w)$ of a complex structure on $\Sigma$ and a map $w:\dot \Sigma \to M$ a
a \emph{contact Cauchy-Riemann map} if $\delbar^\pi w = 0$, and a
\emph{contact instanton} if it satisfies
\be\label{eq:closed}
\delbar^\pi w = 0, \, \quad d(w^*\lambda \circ j) = 0.
\ee
\end{defn}

To avoid notional complexity and for the simplicity of exposition, we will assume that $\Sigma$ is
a compact surface of genus zero with or without boundary. We denote a marked Riemann surface by
$$
(\Sigma, (z_1, \ldots, z_k))
$$
where $(z_1, \ldots, z_k)$ are boundary marked points, unless said otherwise. They are ordered counterclockwise.
We denote by $\overline{z_iz_{i+1}}$ the arc segment between $z_i$ and $z_{i+1}$

In \cite{oh:contacton-Legendrian-bdy}, the present author introduced the open-string version, the
boundary value problem
\be\label{eq:contacton-Legendrian-bdy-intro}
\begin{cases}
\delbar^\pi w = 0, \, \quad d(w^*\lambda \circ j) = 0\\
w(\overline{z_iz_{i+1}}) \subset R_i, \quad i = 1, \ldots, k
\end{cases}
\ee
for a map $w:(\dot \Sigma, \del \dot \Sigma) \to (M,\vec R)$ for the Legendrian boundary
condition
$$
\vec R = \{R_1, \cdots, R_k\},
$$
with a suitable asymptotic boundary condition at the punctures,
and established its ellipticity by deriving relevant a priori
estimates. (See Subsection \ref{subsec:off-shell} for the precise description of the asymptotic
boundary condition.)

The Fredholm theory of contact instantons of closed-string version has been
established by the present author in \cite{oh:contacton}.
One of the main purposes of the present article is to extend the story to the open-string case and
establish the Fredholm theory for the bordered contact instantons. We also
establish the necessary generic transversality results of contact instantons under the perturbations of
\begin{enumerate}
\item contact forms $\lambda$,
\item the adapted CR almost
complex structure $J$ and
\item  the Legendrian boundary condition $R_i$'s.
\end{enumerate}
These are fundamental analytical ingredients needed for the applications to construct the moduli space of contact instantons with
prescribed asymptotic limits and to establish  the gluing theorem for the contact instanton
Floer trajectories similarly as in the Floer theory of Lagrangian intersections
\cite{floer-unregularized} under the change of $J$'s and \cite{oh:fredholm} under
the change of boundary condition.

\begin{rem} We also need to make these studies for the
Hamiltonian-perturbed contact instantons
\be\label{eq:perturbed-contacton-bdy-intro}
\begin{cases}
(du - X_H \otimes dt)^{\pi(0,1)} = 0, \quad d(e^{g_H(u)}(u^*\lambda + H\, dt)\circ j) = 0\\
u(\tau,0) \in R_0, \quad u(\tau,1) \in R_1
\end{cases}
\ee
under the change of $H$ too: Here
the function $g_H: \R\times [0,1] \to \R$ is some canonically defined function
associated to $H$. (See \cite{oh:contacton-Legendrian-bdy}.)
This equation is the contact counterpart of the celebrated Floer's Hamiltonian-perturbed
Cauchy-Riemann equation in symplectic geometry \cite{floer:fixedpoints}. Similarly as in symplectic geometry,
such an extension is an easy generalization of the arguments employed in the
present article in that the presence of $H$ does not play
much role and so omitted for the clarity and simplicity of the exposition.
\end{rem}

\subsection{Fredholm theory of moduli spaces
of bordered contact instantons}

To develop a relevant Fredholm theory of the moduli spaces of
 bordered contact instantons with Legendrian boundary,
we closely follow that of \cite{oh:contacton} by incorporating the
boundary condition in the off-shell function spaces. For this purpose, we
also need to establish  all generic transversality results
of the Reeb chords $\gamma^\pm$ and of the moduli space
$\CM(M,\lambda, J;R;\gamma^-,\gamma^+)$.
Since such a transversality result under the perturbation of boundary Legendrian submanifolds
are not considered in the general situation before, especially in relation to the
present context of bordered contact instantons, we give their full details
in Appendix \ref{sec:nondegeneracy-chords} for the Reeb chords
under the perturbation of contact forms (see \cite[Appendix]{ABW} for the proof of this generic nondegeneracy
for the closed Reeb orbits), and in Part I  for the moduli space of contact instantons
under the perturbation of CR almost complex structures and under that of  Legendrian boundary conditions,
respectively.

One point we would like to highlight in the study of generic nondegeneracy
of Reeb chords is that we  consider the chords \emph{in the sense of Moore paths}
whose  domains vary and so whose elements are represented by the pairs $(\gamma,T)$ 
such that
\be\label{eq:Moore-path}
T \in \R, \quad \gamma:[0,T] \to M, \quad T = \int \gamma^*\lambda
\ee
including $T = 0$. We emphasize here that we include the zero period
$T= 0$ and consider the constant paths $(0,\gamma)$.
Such a path exists only in the non-generic situation when $\psi(R)$ intersects $R$, e.g.,
when $\psi = id$. This transversality is important for the calculation of
contact instanton homology constructed in \cite{oh:entanglement1}, \cite{oh:contacton-gluing}
and its applications  \cite{oh:entanglement1}, \cite{oh:shelukhin-conjecture}
to Sandon-Shelukhin type quantitative contact topology
\cite{sandon:translated}, \cite{shelukhin:contactomorphism}.

\begin{rem} It is an interesting open problem to equip
Kuranishi structures on the compactified moduli spaces of contact instantons
which are suitably
compatible so that they give rise to the (Legendrian) contact DGA that appears in the
case of trivial symplectic cobordism, i.e., the case of symplectization of contact manifolds.
(See \cite{bao-honda-kuranishi}, \cite{pardon-contact}, \cite{ishikawa}.) We believe that
the general abstract framework of the Kuranishi structure from \cite{fooo:book-kuranishi}
or some variation of that of \cite{pardon-contact}, \cite{bao-honda-kuranishi} applies to
the current case of contact instantons too. We hope to come back to this elsewhere.
\end{rem}
We will closely follow the off-shell analytical framework
from  \cite{oh:contacton}, \cite{oh-savelyev} which handle the context
of closed strings. In particular in terms of the decomposition $d\pi = d^\pi w + w^*\lambda\, R_\lambda$
and $Y = Y^\pi + \lambda(Y) R_\lambda$, an explicit tensorial formula of the linearized operator
$D\Upsilon(w)$ is derived in \cite[Theorem 10.1]{oh:contacton} which is the starting point of the
Fredholm theory.
(See also \cite[Theorem 1.15]{oh-savelyev} and Theorem \ref{thm:linearization} in the present paper.)
We note that there are three kinds of perturbations we can think of as mentioned above, i.e., $J$, $\lambda$ and
the boundary Legendrian submanifolds $R_i$'s. The study of perturbations of $J$ is given in
\cite{oh:contacton} which is of no change in the present open string case, and perturbation of
contact forms is largely subsumed into that of $J$. \emph{Therefore we will
focus on the perturbation of the boundary in the present paper after establishment of the Fredholm
theory for the bordered contact instantons.}

For this purpose, similarly as in symplectic geometry
\cite{oh:fredholm}, we consider the universal moduli space
$$
\MM^{\text{\rm univ}}(M,\lambda;\overline \gamma, \underline \gamma)
$$
consisting of the triples $(w,(J,\vec R))$ satisfying \eqref{eq:contacton-Legendrian-bdy-intro}
with $\vec R = \{R_i\}_{i=1}^k$ \emph{with fixed asymptotics
$(\overline \gamma, \underline \gamma)$ at the punctures of $\dot \Sigma$}.
Denote by $\mathfrak{Led}(M,\xi)$ the set of smooth Legendrian
submanifolds the set of whose Reeb chords for this given fixed asymptotics.

We regard the assignment
$$
\Upsilon^{\text{\rm univ}}: w \mapsto \left(\delbar^\pi_J w, d(w^*\lambda \circ j)\right), \quad
\Upsilon: = (\Upsilon_1,\Upsilon_2)
$$
as a section of the (infinite dimensional) vector bundle
\be\label{eq:universal-CD}
\CC\CD^{\text{\rm univ}} \to \CF^{\text{\rm univ}}(M,\lambda; \overline \gamma, \underline \gamma)
\ee
where we put
$$
\CF^{\text{\rm univ}}(M,\lambda; \overline \gamma, \underline \gamma): =
\bigcup_{\vec R \in \mathfrak{Led}(M,\xi)} \CF\left(M,\lambda;\vec R; \overline \gamma, \underline \gamma\right)
\times  \CJ_\lambda(M,\xi).
$$
We denote by $\Upsilon^{\text{\rm univ}}$ the parameterized section of \eqref{eq:universal-CD}
defined by
$$
\Upsilon^{\text{\rm univ}}((w,J),\vec R) = \left(\delbar^\pi_J w, d(w^*\lambda \circ j)\right).
$$
Here $\CF(M,\lambda; \vec R; \overline \gamma, \underline \gamma)$ is the off-shell function space
associated to the moduli space $\MM(M,\lambda;J;\vec R; \overline \gamma, \underline \gamma)$.
(See Definition \ref{defn:Banach-manifold}.)
We refer readers to Notation \ref{nota:CD} for the definition of $\CC\CD$.
Then we summarize the main Fredholm results established in the present article into the following.
This is the open-string counterpart of the Fredholm result
established in \cite{oh:contacton} for the closed string case.

\begin{thm}\label{thm:trans-intro} Let $\ell > 0$ be a given sufficiently large integer.
Then
\begin{enumerate}
\item $\Upsilon^{\text{\rm univ}}$ is a smooth submersion on the open subset of $\CF^{\text{\rm univ}}$
consisting of somewhere injective map $w$.
\item The parameterized moduli space
$\MM^{\CJ_\lambda}(M,\lambda, \vec R;\overline \gamma, \underline \gamma)$ over $\CJ_\lambda(M,\xi)$
with $\vec R$ fixed is an infinite dimensional $C^\ell$ Banach manifold.
\item The projection
$$
 (\Upsilon^{\CJ_\lambda})^{-1}(0) \to \CJ_\lambda(M,\xi)
$$
(with $\vec R$ fixed) is a Fredholm map and its index is the same as that of $D\Upsilon(w)$
for a (and so any) $w \in \MM(M,\lambda, \vec R; J;\overline \gamma, \underline \gamma)$.
\end{enumerate}
\end{thm}

\begin{notation} We will denote by $\CM^{(\cdot)}$ or $\CM(\cdots; (\cdot))$
for various \emph{parameterized} moduli spaces
over the parameter space $(\cdot)$ such as $\CJ_\lambda = \CJ_\lambda(M,\xi)$, $\mathfrak{Leg}(M,\xi)$
and others. We apply similar notations for the associated off-shell function spaces. We also denote by
 $\Upsilon^{(\cdot)}$ the associated parameterized section of $\Upsilon$.
\end{notation}

\subsection{Transversality under the perturbation of boundary condition}

In \cite{oh:fredholm}, the present author established a transversality result
of the open string version of the Gromov-Witten-Floer theory under the
perturbation of Lagrangian boundary conditions.
We also need to study the transversality result under the perturbation of
Legendrian boundaries for the construction of Fukaya-type category on contact manifolds
\cite{oh:entanglement2}. Such a study is carried out by Ekholm-Etnyre-Sullivan \cite{EES-jdg}
in the study of Chekanov-Eliashberg DGA of Legendrian submanifolds through
symplectization adapting that of \cite{oh:fredholm}.

In the present section, we develop
the Legendrian counterpart for the contact instantons imitating the arguments
used in \cite{oh:fredholm}. Although the overall scheme of the proof largely follows
that of \cite{oh:fredholm} its details are much more subtle and nontrivial.
This is largely because the nature of contact instanton equation is more
complicated than the pseudoholomorphic curves, especially because
the study of adjoint problem of the linearized equation involves much more
nontrivial systematic tensorial calculations than that of \cite{oh:fredholm}.
We attract readers' attention that this is another place where
\emph{the framework of contact instantons
exhibits its naturality and compatibility with the existing contact geometry},
in that the contact distribution component and the Reeb component of the system
\eqref{eq:contacton-Legendrian-bdy-intro} well interact with each other
through the optimal covariant tensor calculus leading to the proof of
the parametric transversality result under the perturbation of boundaries.
(See Section \ref{sec:adjoint-eq} for such tensorial calculations.)

As in the proof of \cite[Theorem I]{oh:fredholm}, we transforming the problem
of perturbing boundaries by ambient contact isotopies, and study the following
fibration
$$
\Pi^{\CL eg}: \CM^{\CL eg}(M,\lambda;\overline \gamma, \underline \gamma) \to
\mathfrak{Led}(M,\xi)
$$
with $J$ fixed. We then prove the following generic transversality result.

\begin{thm}[Theorem \ref{thm:trans-under-bdy}]\label{thm:trans-under-bdy-intro}
 Let $(M,\xi)$ be a contact manifold, and let
$\lambda$ a contact form  be given. We consider \eqref{eq:contacton-Legendrian-bdy-intro}.
Fix $J$ and $k$ and consider transversal Legendrian link $\vec R$.
Then there exists a residual subset of $J$'s with $\vec R$ fixed
(resp. of $\vec R = (R_1, \ldots, R_k)$
of Legendrian submanifolds with $J$ fixed) such that  the moduli space
$\MM(M,\lambda, \vec R;\overline \gamma, \underline \gamma) $ is transversal.
\end{thm}
We refer to Appendix \ref{sec:nondegeneracy-chords} for the definition of \emph{transversal link}.

\subsection{Generic evaluation transversality}
\label{subsec:0jet-eval-transversality-intro}

Another crucial general analytical ingredient is
the \emph{evaluation map transversality}.
Such an evaluation transversality will be important for
the application to contact topology, for example,
in the proof of Shelukhin's conjecture \cite{oh:shelukhin-conjecture} and
in the construction of Fukaya-type category of
contact manifolds generated by Legendrian submanifolds in \cite{oh:entanglement2},
similarly as in the study of pseudoholomorphic curves in symplectic geometry.
A rigorous proof of the evaluation transversality is rather subtle even in the pseudoholomorphic
curve theory as  already mentioned in \cite{oh:highjet}.
A conceptually canonical proof of the evaluation map transversality is
given in \cite[Section 10.5]{oh:book1}, which in turn followed the scheme
provided by Le and Ono \cite{le-ono:perturbation} and Zhu and the author \cite{oh-zhu:ajm}
in their studies of one-jet evaluation transversality which
is based on a standard structure theorem of the distributions with point support.
(See Theorem \ref{thm:gelfand} below.)
Naturality of the proof in \cite{oh-zhu:ajm}, \cite{oh:highjet} enables us to adapt it
to the current context of contact instantons.  However the proof of the evaluation transversality
study for contact instantons is significantly more nontrivial in its details
than the case of pseudoholomorphic curves thanks to the different nature
of the equation which involves a system of partial differential equations of mixed degree.

We first recall the off-shell setting of the study of evaluation transversality.
For given Legendrian link $\vec R = (R_1,\cdots, R_k)$, we consider the moduli space
$$
\CM((\dot \Sigma,\del \dot \Sigma),(M,\vec R);J)
$$
 of finite energy maps $w: \dot \Sigma \to M$ satisfying the equation
\eqref{eq:contacton-Legendrian-bdy-intro} as before. (We refer readers \cite{oh:entanglement1,oh:shelukhin-conjecture}
for the definition of relevant energies.)

We will treat the two cases,  evaluation at an interior marked point and one at a boundary marked point,
separately. We denote by the subindex $(\ell,k)$ the number of interior and boundary marked points respectively.
Consider the parameterized moduli space
\beastar
&{}& \CM_{(1,0)}((\dot \Sigma, \del \dot \Sigma), (M,\vec R);\CJ_\lambda) \\
& = & \{((j,w),J, z) \mid w: \Sigma \to
M, \, \Upsilon(J,(j,w))  = 0,\, \, w(\del \dot \Sigma) \subset \vec R,\,  z \in \Int \dot \Sigma \}.
\eeastar
The evaluation map $\ev^+: \CM_{(1,0)}((\dot \Sigma,\del \dot \Sigma,(M, \vec R)) \to M$ is defined by
$$
\ev^+((j,w),z) = w(z).
$$
We then have the fibration
$$
\widetilde \CM_{(1,0)} ((\dot \Sigma,\del \dot \Sigma),(M, \vec R);\CJ_\lambda)  = \bigcup_{J \in \CJ_\lambda}
\widetilde \CM_{(1,0)} ((\dot \Sigma,\del \dot \Sigma),(M, \vec R);J)
\to \CJ_\lambda
$$
and
$$
\widetilde \CM_{(1,0)}^{\text{\rm inj}}((\dot \Sigma,\del \dot \Sigma),(M, \vec R))
$$
to be the open subset of
$\widetilde \CM_{(1,0)}(M,\lambda, \vec R)$ consisting of somewhere injective
contact instantons.
We have the universal ($0$-jet) evaluation map
$$
\Ev^+: \widetilde \CM_{(1,0)} ((\dot \Sigma,\del \dot \Sigma),(M, \vec R);\CJ_\lambda) \to M.
$$
We also consider the boundary evaluation map
$$
\Ev_\del: \widetilde \CM_{(0,1)}((\dot \Sigma,\del \dot \Sigma),(M, \vec R);\CJ_\lambda) \to \vec R.
$$
The basic generic transversality is the following.

\begin{thm}[$0$-jet evaluation transversality, Theorem \ref{thm:0-jet}]\label{thm:0jet-intro}
Both evaluation maps $\Ev^+$ and $\Ev_\del$ are
submersions on the open subset consisting of somewhere injective elements of
$\CM^{\CJ_\lambda}((\dot \Sigma,\del \dot \Sigma),(M, \vec R))$.
\end{thm}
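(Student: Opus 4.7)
The plan is to follow the scheme of Le--Ono and Oh--Zhu, as streamlined in \cite[Section~10.5]{oh:book1}, and adapt it to the mixed-degree contact instanton operator $\Upsilon$. Fix a somewhere injective $(w,J,z)$ in $\widetilde\CM_{(1,0)}((\dot\Sigma,\del\dot\Sigma),(M,\vec R))$. A tangent vector at this point is a triple $(\xi, Y, \zeta)$ with $\xi$ a section of $w^*TM$ in the appropriate Sobolev completion satisfying the linearized Legendrian boundary condition $\xi(z')\in T_{w(z')}R_i$, $Y \in T_J\CJ_\lambda$ a variation of the CR-almost complex structure, and $\zeta \in T_z\dot\Sigma$, subject to
$$
D\Upsilon(w)\cdot \xi + \partial_J\Upsilon(w)\cdot Y = 0.
$$
The linearized evaluation is $D\Ev^+(\xi, Y, \zeta) = \xi(z) + dw(z)\cdot \zeta$, and the goal is to prove it surjects onto $T_{w(z)}M$. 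I would pass to the dual formulation: if surjectivity failed, there would exist a nonzero covector $v^*\in T^*_{w(z)}M$ annihilating the image, and by functional-analytic duality applied to the pair $(\Upsilon,\ev_z)$ one produces a distributional section $\eta$ in the dual bundle satisfying
$$
D\Upsilon(w)^*\,\eta = v^*\otimes \delta_z
$$
in the distribution sense, together with the $L^2$ orthogonality $\langle \eta, \partial_J\Upsilon\cdot Y\rangle = 0$ for every admissible variation $Y$ of $J$.

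The core step is to derive a contradiction by showing $\eta \equiv 0$ and $v^*=0$. Since $w$ is somewhere injective, one chooses an injective embedded point $z_0\ne z$ of $w$ and a small open set $U\subset M$ around $w(z_0)$ whose preimage $w^{-1}(U)$ is a single disk around $z_0$ disjoint from $z$. Compactly supported variations $Y$ of $J$ localized in $U$, combined with the $L^2$ orthogonality, force $\eta$ to vanish on $w^{-1}(U)$. Elliptic regularity makes $\eta$ smooth on $\dot\Sigma\setminus\{z\}$, and an Aronszajn-type unique continuation for the formal adjoint extends the vanishing to all of $\dot\Sigma\setminus\{z\}$. Hence $\eta$ is supported at $\{z\}$, and by the structure theorem for distributions with point support (Theorem~\ref{thm:gelfand}) one may write
$$
\eta = \sum_{|\alpha|\le N} c_\alpha\,\partial^\alpha \delta_z
$$
in local coordinates around $z$. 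Substituting back into $D\Upsilon^*\eta = v^*\otimes\delta_z$ and comparing coefficients of $\partial^\alpha\delta_z$ from the top order downward inductively forces every $c_\alpha$ to vanish, and at the last step forces $v^*=0$. For the boundary evaluation $\Ev_\del$ the same scheme applies, with $z_0$ chosen on $\del\dot\Sigma$ and the $J$-perturbation localized near $w(z_0)\in R_i$, augmented by variations produced from ambient contact isotopies of $\vec R$ if needed.

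The step I expect to be the real obstacle is the verification of two analytic ingredients in the present mixed-degree setting. The contact instanton operator has the block form $\Upsilon(w) = \bigl(\delbar^\pi w,\; d(w^*\lambda\circ j)\bigr)$ whose two components live in bundles of different tensor degrees, so one must identify the correct target bundle, compute $D\Upsilon^*$ explicitly, and verify that the resulting operator is elliptic in a suitable graded sense in order to invoke elliptic regularity and unique continuation. In addition, one must check that the $J$-variations supply enough directions to kill $\eta$ on $w^{-1}(U)$: this is more delicate than for pseudoholomorphic curves because only the $\delbar^\pi$ block depends directly on $J$, so the required pointwise surjectivity must be extracted from that single component together with the Legendrian boundary transversality framework established in \cite{oh:entanglement1}. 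Once these two points are settled the Le-Ono/Oh-Zhu argument transfers essentially verbatim, and the theorem follows.
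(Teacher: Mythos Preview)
Your overall scheme matches the paper's: set up the dual problem via Hahn--Banach, use $J$-variations at an injective point plus unique continuation to force the cokernel element to be supported at the marked point, then invoke the structure theorem for point-supported distributions. Where you diverge is in the endgame, and this is exactly where the paper does the real work.

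You propose to write $\eta = \sum_{|\alpha|\le N} c_\alpha\,\partial^\alpha\delta_z$ and kill the coefficients by ``comparing from the top order down'' in $D\Upsilon^*\eta = v^*\otimes\delta_z$. In the present mixed-order setting this is not straightforward: the adjoint system couples a first-order block (the Cauchy--Riemann adjoint acting on the $\Omega^{(0,1)}$-dual component $\eta$) with a second-order block (the Laplacian acting on the $\Omega^2$-dual component $f$), plus off-diagonal zeroth- and first-order terms, so a naive order-by-order induction does not obviously close. The paper bypasses this entirely. First, the Sobolev setup is chosen with $k=2$, so $\eta\in (W^{1,p})^*\cong W^{-1,q}$; the structure theorem then forces the order of $\eta$ to be \emph{zero}, i.e.\ $\eta=\beta_{z_0}\delta_{z_0}$ with no derivative terms at all (Lemma~\ref{lem:dJbar=dbar}). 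Second, the paper splits $Y=Y^\pi+\lambda(Y)R_\lambda$ and uses the subtraction $\widetilde Y(z)=Y(z)-\chi(z)Y(z_0)$ to decouple the pairing into two independent equations (\eqref{eq:Reeb-term} and \eqref{eq:Xi-term} in the interior case), from which $(\eta,f)$ is shown to be a \emph{global} weak solution of $(\delbar^\pi_J)^\dagger\eta=0$, $\Delta f=0$; continuity plus the already-known vanishing off $\{z_0\}$ then gives $(\eta,f)=0$. Only after this does one read off $X_p=0$ from $(Y(z_0),X_p)=0$ for all $Y$.

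So your plan is the right one, but the step you flagged as ``the real obstacle'' is genuinely the obstacle, and your proposed resolution (top-down coefficient matching) is not the mechanism the paper uses and would require substantially more justification for a mixed-order system. The two missing ideas you should incorporate are: (i) the Sobolev regularity constraint that caps the distribution order at zero, and (ii) the $\xi$--Reeb splitting together with the $\widetilde Y$ subtraction that decouples the problem into separately tractable elliptic equations. For the boundary case the paper runs the same argument with the Legendrian boundary condition producing extra boundary integrals (Lemma~\ref{lem:-bdy-mid-summand}), which are handled by the Dirichlet-type condition $\lambda(Y)|_{\del\dot\Sigma}=0$.
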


\subsection{Review of the contact triad connection}
\label{subsec:connection}

In this subsection, we give a brief review of the notion of canonical connection of contact triad
$(M,\lambda, J)$ that was introduced in \cite{oh-wang:connection}. This connection suits best
for our tensorial calculations performed in the study of various component of the analyses of the moduli space of
contact instanons which lead to various output equations that enable us to analyse their
$L^2$-adjoint problem related to the application of Hahn-Banach theorem entering in the
transversality analysis of the moduli spaces. (See Section \ref{sec:adjoint-eq}
for the relevant tensor calculations.)

\begin{thm}[Contact Triad Connection \cite{oh-wang:connection}]\label{thm:connection}
For every contact triad $(M,\lambda,J)$, there exists a unique affine connection $\nabla$, called the contact triad connection,
 satisfying the following properties:
\begin{enumerate}
\item The connection $\nabla$ is  metric with respect to the contact triad metric, i.e., $\nabla g=0$;
\item The torsion tensor $T$ of $\nabla$ satisfies $T(R_\lambda, \cdot)=0$;
\item The covariant derivatives satisfy $\nabla_{R_\lambda} R_\lambda = 0$, and $\nabla_Y R_\lambda\in \xi$ for any $Y\in \xi$;
\item The projection $\nabla^\pi := \pi \nabla|_\xi$ defines a Hermitian connection of the vector bundle
$\xi \to M$ with Hermitian structure $(d\lambda|_\xi, J)$;
\item The $\xi$-projection of the torsion $T$, denoted by $T^\pi: = \pi T$ satisfies the following property:
\be\label{eq:TJYYxi}
T^\pi(JY,Y) = 0
\ee
for all $Y$ tangent to $\xi$;
\item For $Y\in \xi$, we have the following
$$
\del^\nabla_Y R_\lambda:= \frac12(\nabla_Y R_\lambda- J\nabla_{JY} R_\lambda)=0.
$$
\end{enumerate}
\end{thm}
From this theorem, we see that the contact triad connection $\nabla$ canonically induces
a Hermitian connection $\nabla^\pi$ for the Hermitian vector bundle $(\xi, J, g_\xi)$,
and we call it the \emph{contact Hermitian connection}.

Moreover, the following fundamental properties of the contact triad connection was
proved in \cite{oh-wang:connection}, which will be useful to perform tensorial calculations later.

\begin{cor}\label{cor:connection}
Let $\nabla$ be the contact triad connection. Then
\begin{enumerate}
\item For any vector field $Y$ on $M$,
\be\label{eq:nablaYX}
\nabla_Y R_\lambda = \frac{1}{2}(\CL_{R_\lambda}J)JY;
\ee
\item $\lambda(T)=d\lambda$.
\end{enumerate}
\end{cor}

We refer readers to \cite{oh-wang:connection} for more discussion on the contact triad connection and its relation with other related canonical type connections.

\bigskip

\noindent{\bf Acknowledgement:}  We would like to thank the referees for all her/his
careful reading of manuscript and pointing out multitude of inconsistent notations, providing
helpful suggestions to improve readability of the paper. Their suggestions and questions
much improve the exposition of the paper and hence readability thereof.

\part{Generic mapping transversality under the perturbation of boundaries}

\section{Fredholm theory of (relative) contact instantons}
\label{sec:Fredholm-theory}

We start with setting-up the proper framework for the study of generic
nondegeneracy results for the Reeb orbits and chords.

\subsection{Set-up for the study of generic nondegeneracy of Reeb orbits and chords}
\label{subsec:setup-nondegeneracy}

We first introduce the following definition
\begin{defn} \label{defn:Moore-path2}Let $T \geq 0$ and consider a curve $\gamma:[0,1] \to M$ be a
smooth curve. We say $(\gamma,T)$ an \emph{iso-speed  Reeb trajectory} if the pair satisfies
$$
\dot \gamma(t) = T R_\lambda(\gamma(t)), \, \int \gamma^*\lambda = T
$$
for all $t \in [0,1]$. If $\gamma(1) = \gamma(0)$, we call $(\gamma,T)$ an
iso-speed closed Reeb orbit and $T$ the \emph{action} of $\gamma$.
\end{defn}

\begin{rem} We remark that this representation of a Moore path is different from that of
the one \eqref{eq:Moore-path} given in the introduction of the present paper. The relationship
is via the coordinate transformation
$$
(\gamma,T) \mapsto (T, \gamma((\cdot)/T))
$$
which transforms the pair $(\gamma,T)$ in Definition \ref{defn:Moore-path2}
 to the one in \eqref{eq:Moore-path} as long as $T \neq 0$. This way of representing a Reeb chord as a Moore path
 is useful for the study of transversality of Reeb chords in that now the domains of 
 the paths $\gamma$ are fixed to $[0,1]$.
 (See Appendix \ref{sec:nondegeneracy-chords}.)
\end{rem}

We start with the case of closed orbits.

\begin{defn} Let $(\gamma,T)$ be an iso-speed closed Reeb orbit in the sense as above.
When $|T| > 0$ is minimal among such that $\gamma(1) = \gamma(0)$ with $\int \gamma^*\lambda \neq 0$,
we call the pair $(\gamma,T)$ a \emph{simple} iso-speed closed Reed orbit.
\end{defn}

We consider the relative version thereof.

\begin{defn}[Iso-speed Reeb chords \cite{oh:entanglement1}]\label{defn:isospeed-chords}
 Let $(R_0,R_1)$ be a pair of Legendrian submanifolds of $(M,\xi)$ and $T\geq 0$.
For given contact form $\lambda$, we say a pair $(\gamma, T)$ with $\gamma:[0,1] \to M$
is an iso-speed Reeb chord from $R_0$ to $R_1$
if
$$
\dot \gamma(t) = T  R_\lambda(\gamma(t)), \quad \gamma(0) \in R_0, \, \gamma(1) \in R_1.
$$
We call  such a pair $(\gamma, T)$  \emph{positive} (resp. \emph{negative}) if $T \geq 0$
(resp. if $T < 0$).
\end{defn}

 We alert readers that the constant
curve is not a Reeb trajectory in the standard sense in that it does not satisfy the Reeb trajectory equation $\dot x = R_\lambda(x)$, while
it satisfies $\dot x = 0 = 0\cdot R_\lambda(x)$ which shows that any constant curve valued at
a point from $R_0 \cap R_1$ is a iso-speed Reeb chord with speed 0. When $T > 0$, the reparameterization $\gamma_T:[0,T] \to M$
$$
\gamma_T(t): =  \gamma(t/T)
$$
satisfies $\dot x = R_\lambda(x)$ with the period $T> 0$, i.e., satisfies $\gamma_T(0) = \gamma_T(T)$.

\begin{rem}  Note that when $R_0 = R_1$, we have `lots of iso-speed Reeb chords'
arising from the constant chords. We will show that this component of constant chords
is nondegenerate in the Bott-Morse sense.
This is important in our study of contact instanton Legendrian Floer homology we introduce
in \cite{oh:entanglement1,oh:contacton-gluing} and \cite{oh-yso:J1B},
especially in its calculation when $R_1$ is contact isotopic to $R_0$ and $C^1$-close thereto.
This is the main reason why our generic nondegeneracy includes the constant trajectories
defined over the fixed domain $[0,1]$ and emphasizes the \emph{iso-speed} formulation of
the Reeb chords given in Definition \ref{defn:isospeed-chords}.
\end{rem}

We now study the property of nondegeneracy of the pair $(\gamma, T)$ by formulating the notion
of nondegeneracy precisely including the case of constant trajectories, i.e., the case with $T = 0$.

Let $(\gamma, T)$ be a closed Reeb orbit of action $T$.
By definition, we can write $\gamma(T) = \phi^T_{R_\lambda}(\gamma(0))$
for the Reeb flow $\phi^T= \phi^T_{R_\lambda}$ of the Reeb vector field $R_\lambda$.
In particular $p = \gamma(0)$ is a fixed point of the diffeomorphism $\phi^T$.
Since $\CL_{R_\lambda}\lambda = 0$, $\phi^T$ is a contact diffeomorphism and so
induces an isomorphism
$$
\Psi_\gamma : = d\phi^T(p)|_{\xi_p}: \xi_p \to \xi_p
$$
which is the linearization restricted to $\xi_p$ of the Poincar\'e return map.

\begin{defn} Let $T> 0$. We say a $T$-closed Reeb orbit $(T,\lambda)$ is \emph{nondegenerate}
if $\Psi_\gamma:\xi_p \to \xi_p$ with $p = \gamma(0)$ has not eigenvalue 1.
\end{defn}
When $T=0$, it is well-known that the constant loop is nondegenerate in the Bott-Morse sense.

For $T > 0$, the following generic nondegeneracy result is well-known to the experts, at least
for the case of closed Reeb orbits.
(See \cite[Appendix A]{ABW} for its proof.)

\begin{thm}[Albers-Braam-Wendl]\label{thm:ABW}
There exists a residual subset
$$
\CC^{\text{\rm reg}}(M,\xi) \subset \CC(M,\xi)
$$
such that for any $\lambda \in \CC^{\text{\rm reg}}(M,\xi)$ all the
closed Reeb orbits $\lambda$ are nondegenerate if $T> 0$.
\end{thm}

The main purpose of Appendix \ref{sec:nondegeneracy-chords}
is to prove the following generic nondegeneracy result
for Reeb chords which extends the above nondegeneracy results to
the case of  Reeb chords and to the Bott-Morse situation of constant chords.

\begin{thm} \label{thm:Reeb-chords}
Let $(M,\xi)$ be a contact manifold. Let  $(R_0,R_1)$ be a pair of Legendrian submanifolds
where either $R_0 \cap R_1 = \emptyset$ or $R_0 = R_1$.
\begin{enumerate}
\item For a given pair $(R_0,R_1)$, there exists a residual subset
$$
\CC^{\text{\rm reg}}(\xi;R_0,R_1) \subset \CC(M,\xi)
$$
such that for any $\lambda \in \CC^{\text{\rm reg}}(\xi;R_0,R_1) $ all
Reeb chords from $R_0$ to $R_1$ are nondegenerate for $T > 0$ when $R_0 \cap R_1 = \emptyset$,
and Bott-Morse nondegenerate for $R_0 = R_1$ with $T = 0$.
\item For a given contact form $\lambda$, there exists a residual subset of pairs $(R_0,R_1)$
of Legendrian submanifolds such that  all Reeb chords from $R_0$ to $R_1$ are
nondegenerate for $T > 0$ and
Bott-Morse nondegenerate when $T = 0$.
\end{enumerate}
\end{thm}

\subsection{Asymptotic convergence and vanishing of asymptotic charge}

Next we recall from \cite{oh-wang:CR-map1} (resp. from \cite{oh-yso:index})
the asymptotic convergence result of contact instantons
of finite energy $E(w) = E^\pi(w) + E^\perp(w) < \infty$
for the closed string case (resp. with Legendrian boundary
condition of pair $(R_0,R_1)$ for the open string case)
near the punctures of a Riemann surface $\dot \Sigma$, respectively.
(We refer to \cite{oh:contacton,oh:entanglement1,oh-yso:J1B}
for the precise definition of total energy.)

Let $\dot\Sigma$ be a punctured Riemann surface with punctures
$$
\{p^+_i\}_{i=1, \cdots, l^+}\cup \{p^-_j\}_{j=1, \cdots, l^-}
$$
equipped with a metric $h$ with cylinder-like ends
(resp. \emph{strip-like ends} for the open string case)
outside a compact subset $K_\Sigma$.
Let $w: \dot \Sigma \to M$ be any such smooth map.

 Under the hypotheses of nondegeneracy $\lambda$ (resp. of the pair
 $(\lambda,(R_0,R_1)$ for the open string case)
 and of asymptotic convergence at the punctures, we can associate two
natural asymptotic invariants at each puncture defined as
\bea
T & := & \lim_{r \to \infty} \int_{\{r\}\times S^1}(w|_{\{r\}\times S^1})^*\lambda
\label{eq:TQ-T-intro}\\
Q & : = & \lim_{r \to \infty} \int_{\{r\}\times S^1}((w|_{\{r\}\times S^1 })^*\lambda\circ j)\label{eq:TQ-Q-intro}
\eea
at each puncture.
(Here we only look at positive punctures. The case of negative punctures is similar.)
As in \cite{oh-wang:CR-map1}, we call $T$ the \emph{asymptotic contact action}
and $Q$ the \emph{asymptotic contact charge} of the contact instanton $w$ at the given puncture.

The proof of the following subsequence convergence result
is given in \cite[Theorem 6.4]{oh-wang:CR-map1}.

\begin{thm}[Subsequence Convergence-closed strings,
Theorem 6.4 \cite{oh-wang:CR-map1}]
\label{thm:subsequence-intro}
Let $w:[0, \infty)\times S^1 \to M$ satisfy the contact instanton equations \eqref{eq:contacton-Legendrian-bdy-intro}
of finite energy.
Then for any sequence $s_k\to \infty$, there exists a subsequence, still denoted by $s_k$, and a
massless instanton $w_\infty(\tau,t)$ (i.e., $E^\pi(w_\infty) = 0$)
on the cylinder $\R \times S^1$  that satisfies the following:
\begin{enumerate}
\item $\delbar^\pi w_\infty = 0$ and
$$
\lim_{k\to \infty}w(s_k + \tau, t) = w_\infty(\tau,t)
$$
in the $C^l(K \times S^1, Q)$ sense for any $l$, where $K\subset [0,\infty)$ is an arbitrary compact set.
\item $w_\infty^*\lambda = -Q\, d\tau + T\, dt$
\end{enumerate}
\end{thm}

In general $Q  = 0$ does not necessarily hold for the closed string case.
When $Q \neq 0$ combined with $T = 0$ happens, we say $w$ has
the bad limit of \emph{appearance of spiraling instantons along the Reeb core}. It is also proven in \cite{oh:contacton} that If $Q = 0 = T$,
then the puncture is removable. When $Q = 0$, which is always the case when contact instanton
is exact such as those arising from the symplectization case,
$w_\tau$ converges to a Reeb orbit of period $|T|$ exponentially fast.

Now we make the corresponding statement for
the open string case proved in
\cite{oh:contacton-Legendrian-bdy}, \cite{oh-yso:index}.

\begin{thm}[Subsequence Convergence; the case of open strings]\label{thm:subsequence-open-intro}
Let $w:[0, \infty)\times [0,1]\to M$ satisfy the contact instanton equations \eqref{eq:contacton-Legendrian-bdy-intro}.
Then for any sequence $s_k\to \infty$, there exists a subsequence, still denoted by $s_k$, and a
massless instanton $w_\infty(\tau,t)$ (i.e., $E^\pi(w_\infty) = 0$)
on the cylinder $\R \times [0,1]$  such that
$$
\lim_{k\to \infty}w(s_k + \tau, t) = w_\infty(\tau,t)
$$
in the $C^l(K \times [0,1], Q)$ sense for any $l$, where $K\subset [0,\infty)$ is an arbitrary compact set.
Furthermore, $w_\infty$ has $Q = 0$ and the formula $w_\infty(\tau,t)= \gamma(T\, t)$  with
asymptotic action $T$, where $\gamma$ is some Reeb chord joining $R_0$ and $R_1$ of period $|T|$.
\end{thm}

\begin{cor}[Vanishing Charge] Assume the pair $(\lambda, \vec R)$ is nondegenerate.
Let $w$ be as above with finite energy. Suppose that $w(\tau,\cdot)$ converges as $\tau \to \infty$ in the
strip-like coordinate at a puncture $p \in \del \Sigma$  with associated Legendrian pair $(R,R')$.
 Then its  asymptotic charge $Q$ vanishes at $p$.
 \end{cor}

\subsection{Off-shell description of moduli spaces}
\label{subsec:off-shell}

For the exposition of this section, we adapt the one given in
\cite{oh:contacton} to the current context of bordered contact instantons by
incorporating the Legendrian boundary condition. In particular, we consider general bordered
compact surfaces of arbitrary genus to be consistent with that of \cite{oh:contacton} (for the closed case),
and order the marked points starting from $k = 1$, not from $k=0$ as in \eqref{eq:contacton-Legendrian-bdy-intro}.

We will be mainly interested in the two cases:
\begin{enumerate}
\item A generic nondegenerate case of $R_1, \cdots, R_k$ which in particular
are mutually disjoint,
\item The case where $R_1, \cdots, R_k = R$.
\end{enumerate}

We now choose a $\lambda$-adapted CR-almost complex structure $J$.
Let $(\Sigma, j)$ be a bordered compact Riemann surface, and let $\dot \Sigma$ be the
punctured Riemann surface with $\{z_1,\ldots, z_k \} \subset \del \Sigma$, we consider the moduli space
$$
\CM((\dot \Sigma,\del \dot \Sigma),(M,\vec R);J), \quad \vec R = (R_1,\cdots, R_k)
$$
 of finite energy maps $w: \dot \Sigma \to M$ satisfying the equation
\eqref{eq:contacton-Legendrian-bdy-intro}.

The  second case is transversal in the Bott-Morse sense both for the Reeb
chords and for the moduli space of contact instantons, which is
rather straightforward and easier to handle, and so omitted.

For the first case, all the asymptotic Reeb chords  are nonconstant and have nonzero
action $T \neq 0$. In particular, the relevant punctures $z_i$
are not removable. Therefore we have the decomposition of the finite energy moduli space
$$
\CM((\dot \Sigma,\del \dot \Sigma),(M,\vec R);J) =
\bigcup_{\vec \gamma \in \prod_{i=0}^{k-1}\frak{Reeb}(R_i,R_{i+1})}
\CM((\dot \Sigma,\del \dot \Sigma),(M,\vec R);J;\vec \gamma)
$$
by the asymptotic convergence result from \cite{oh:contacton-Legendrian-bdy}.
Depending on the choice of strip-like coordinates we divide the punctures
$$
\{z_1, \cdots, z_k\} \subset \del \Sigma
$$
into two subclasses
$$
p_1, \cdots, p_{s^+}, q_1, \cdots, q_{s^-} \in \del \Sigma
$$
as the positive and negative boundary punctures. We write $k = s^+ + s^-$.

Let $\gamma^+_i$ for $i =1, \cdots, s^+$ and $\gamma^-_j$ for $j = 1, \cdots, s^-$
be two given collections of Reeb chords at positive and negative punctures
respectively. Following the notations from \cite{behwz}, \cite{bourgeois}
(but applied to the Reeb chords instead of closed Reeb orbits),
we denote by $\underline \gamma$ and $\overline \gamma$ the corresponding
collections
\beastar
\underline \gamma & = & \{\gamma_1^+,\cdots, \gamma_{s^+}^+\} \\
\overline \gamma & = & \{\gamma_1^-,\cdots, \gamma_{s^-}^-\}.
\eeastar
For each $p_i$ (resp. $q_j$), we associate the strip-like
coordinates $(\tau,t) \in [0,\infty) \times [0,1]$ (resp. $(\tau,t) \in (-\infty,0] \times [0,1]$)
on the punctured disc $D_{e^{-2 \pi K_0}}(p_i) \setminus \{p_i\}$
(resp. on $D_{e^{-2 \pi K_0}}(q_i) \setminus \{q_i\}$) for some sufficiently large $K_0 > 0$.

\begin{defn}\label{defn:Banach-manifold} We define
\be\label{eq:offshell-space}
\CF((\dot \Sigma, \del \dot \Sigma),(M, \vec R);\underline \gamma,\overline \gamma)
=: \CF(M,\vec R;\underline \gamma, \overline \gamma)
\ee
to be the set of smooth maps satisfying the boundary condition
\be\label{eq:bdy-condition}
w(z) \in R_i \quad \text{ for } \, z \in \overline{z_{i-1}z_i} \subset \del \dot \Sigma
\ee
and the asymptotic condition
\be\label{eq:limatinfty}
\lim_{\tau \to \infty}w((\tau,t)_i) = \gamma^+_i(T_i(t+t_i)), \qquad
\lim_{\tau \to - \infty}w((\tau,t)_j) = \gamma_j^-(T_j(t-t_j))
\ee
for some $t_i, \, t_j \in [0,1]$, where
$$
T_i = \int_0^1 (\gamma^+_i)^*\lambda, \quad T_j = \int_0^1 ( \gamma^-_j)^*\lambda.
$$
Here $t_i,\, t_j$ depends on the given analytic coordinate and the parameterizations of
the Reeb chords.
\end{defn}
We will fix the domain complex structure $j$ and its associated K\"ahler metric $h$.
We regard the assignment
\be\label{eq:Upsilon}
\Upsilon: w \mapsto \left(\delbar^\pi w, d(w^*\lambda \circ j)\right), \quad
\Upsilon: = (\Upsilon_1,\Upsilon_2)
\ee
as a section of the (infinite dimensional) vector bundle:
We first formally linearize and define a linear map
\be\label{eq:DUpsilonw}
D\Upsilon(w): \Omega^0(w^*TM,(\del w)^*T\vec R) \to \Omega^{(0,1)}(w^*\xi) \oplus \Omega^2(\Sigma)
\ee
where we have the tangent space
$$
T_w \CF = \Omega^0(w^*TM,(\del w)^*T\vec R).
$$
For the simplicity of notation, we also introduce the following notation.
\begin{notation}[Codomain of $\Upsilon$]\label{nota:CD} For given fixed $\vec R$, we define
\be\label{eq:CDw}
\CC\CD_{(J,\vec R),(j,w)}: = \Omega_J^{(0,1)}(w^*\xi) \oplus \Omega^2(\Sigma)
= \CH^{\pi(0,1)}_w \oplus \Omega^2(\Sigma)
\ee
and
\beastar
\CC\CD_{(J,\vec R)} & = & \bigcup_{(j,w) \in \CF} \{(j,w)\} \times \CC\CD_{(J,\vec R),(j,w)}\\
\CC\CD_{\vec R} & = & \bigcup_{J,(j,w) \in \CF} \{J\} \times \CC\CD_{(J,\vec R)}.
\eeastar
Here $\CC\CD$ stands for `codomain'.
\end{notation}

Since we will not vary $\vec R$ in the present discussion, we omit $\vec R$
from notation and simply write $\CC\CD = \CC\CD_{\vec R}$.
We also denote by $\CC\CD^{\text{\rm univ}}$ as the further union
\be\label{eq:CD-univ}
\CC\CD^{\text{\rm univ}} = \bigcup_{(J,\vec R)}\{(J,\vec R)\} \times  \CC\CD_{(J,\vec R)}.
\ee
Let $k \geq 2$ and $p > 2$. We denote by
\be\label{eq:CWkp}
\CW^{k,p}: = \CW^{k,p}((\dot \Sigma, \del \dot \Sigma),(M, \vec R); \underline \gamma,\overline \gamma)
\ee
the completion of the off-shell function space \eqref{eq:offshell-space}.
It has the structure of a Banach manifold modeled by the Banach space given by the following

\begin{defn}[Tangent space $T_w\CW^{k,p}$]\label{defn:tangent-space} We define
$$
W^{k,p} (w^*TM, (\del w)^*T\vec R; \underline \gamma,\overline \gamma)
$$
to be the set of vector fields $Y = Y^\pi + \lambda(Y) R_\lambda$ along $w$ that satisfy
\be\label{eq:tangent-element-pi}
\begin{cases}
Y^\pi \in W^{k,p}\left((\dot\Sigma, \del \dot \Sigma), (w^*\xi, (\del w)^*T\vec R)\right), \\
Y^\pi(z)\in T\vec R\quad \text{for }\, z \in \del \dot \Sigma
\end{cases}
\ee
and
\be\label{eq:tangent-element-lambda}
\begin{cases}
\lambda(Y) \in W^{k,p}((\dot \Sigma, \del \dot \Sigma),(\R, \{0\})),\\
\lambda(Y)(z) = 0 \quad \text{for }\, z \in \del \dot \Sigma
\end{cases}
\ee
\end{defn}
Here we use the splitting
$$
TM = \xi \oplus \span_\R\{R_\lambda\}
$$
where $\span_\R\{R_\lambda\}: = \CL$ is a trivial line bundle and so
$$
\Gamma(w^*\CL) \cong C^\infty\left((\dot \Sigma, \del \dot \Sigma), (\R,\{0\})\right).
$$
The above Banach space is decomposed into the direct sum
\be\label{eq:tangentspace}
W^{k,p}((\dot\Sigma,\del \dot \Sigma),( w^*\xi, (\del w)^*T\vec R))
\bigoplus W^{k,p}((\dot \Sigma,\del \dot \Sigma), ( \R, \{0\})) \otimes R_\lambda :
\ee
by writing $Y = (Y^\pi, g R_\lambda)$ with a real-valued function $g = \lambda(Y(w))$ on $\dot \Sigma$.
Here we measure the various norms in terms of the triad metric of the triad $(M,\lambda,J)$.

Now for each given $J$ and $w \in \CW^{k,p}((\dot \Sigma,\del \dot \Sigma), (M, \vec R);\underline \gamma,\overline \gamma)$,
we consider the Banach space
$$
\Omega^{(0,1)}_{k-1,p}(w^*\xi;J): = W^{k-1,p}(\Lambda_J^{(0,1)}(w^*\xi))
$$
the $W^{k-1,p} $-completion of $\Omega^{(0,1)}(w^*\xi) = \Gamma(\Lambda^{(0,1)}(w^*\xi)$ and form the bundle
\be\label{eq:CH01}
\CH_{k-1,p}^{(0,1)}(M,J): = \bigcup_{w \in \CW^{k,p}} \Omega^{(0,1)}_{k-1,p}(w^*\xi;J)
\ee
over $\CW^{k,p}$.

\begin{defn}\label{defn:CHCM01} We associate the Banach space
\be\label{eq:CH01-w}
\CC\CD^{(0,1)}_{k-1,p}(M,\lambda;J)|_w: = \Omega^{(0,1)}_{k-1,p}(w^*\xi;J) \oplus \Omega^2_{k-2,p}(\dot \Sigma)
\ee
to each $w \in \CW^{k,p}$ and form the bundle
\beastar
\CC\CD^{(0,1)}_{k-1,p}(M,\lambda;J)& : = & \bigcup_{w \in \CW^{k,p}} \CC\CD^{(0,1)}_{k-1,p}(M,\lambda)|_w\\
& \cong & \CH_{k-1,p}^{(0,1)}(M,J) \bigoplus \left(\CW^{k,p} \times \Omega^2_{k-2,p}(\dot \Sigma)\right)
\eeastar
over the Banach manifold $\CW^{k,p}$ given in \eqref{eq:CWkp}.
\end{defn}

Then we can regard the assignment
$$
\Upsilon_1: w \mapsto \delbar^\pi w
$$
as a smooth section of the bundle $\CH_{k-1,p}^{(0,1)}(M,\lambda) \to \CW^{k,p}$. Furthermore
the assignment
$$
\Upsilon_2: w \mapsto d(w^*\lambda \circ j)
$$
defines a smooth section of the trivial bundle
$$
\Omega^2_{k-2,p}(\Sigma) \times \CW^{k,p} \to \CW^{k,p}.
$$
We summarize the above discussion into the following lemma.

\begin{lem}\label{lem:Upsilon} Consider the vector bundle
$$
\CC\CD_{k-1,p}(M, \vec R;J) \to \CW^{k,p}.
$$
The map $\Upsilon$ continuously extends to a continuous section still denoted by
$$
\Upsilon: \CW^{k,p} \to \CC\CD_{k-1,p} (M,\vec R;J).
$$
\end{lem}

With these preparations, the following is a consequence of the exponential estimates established
in \cite{oh:contacton-Legendrian-bdy}.
(See \cite{oh-wang:CR-map1} for the closed string case of vanishing charge.)

\begin{prop}\label{prop:exp-decay}
Assume $\lambda$ is nondegenerate.
Let $w:\dot \Sigma \to M$ be a contact instanton and let $w^*\lambda = a_1\, d\tau + a_2\, dt$.
Suppose
\bea
\lim_{\tau \to \infty} a_{1,i} = 0, &{}& \, \lim_{\tau \to \infty} a_{2,i} = T(p_i)\nonumber\\
\lim_{\tau \to -\infty} a_{1,j} = 0, &{}& \, \lim_{\tau \to -\infty} a_{2,j} = T(q_j)
\eea
at each puncture $p_i$ and $q_j$.
Then $w \in \CW^{k,p}(M, \vec R;\underline \gamma,\overline \gamma)$.
\end{prop}

Now we are ready to define the moduli space of contact instantons with prescribed
asymptotic condition.
\begin{defn}\label{defn:tilde-modulispace} Consider the zero set of the section $\Upsilon$
\be\label{eq:defn-tildeMM}
\widetilde \CM(M,\lambda, \vec R;\underline \gamma,\overline \gamma;J) =  \Upsilon^{-1}(0)
\ee
in the Banach manifold $\CW^{k,p}(M,\lambda,\vec R;\underline \gamma,\overline \gamma)$.
We write $w \sim w'$ for two elements therefrom if there is a biholomorphism
$\varphi$ of the punctured bordered Riemann surfaces $(\dot \Sigma, \del \dot \Sigma)$ such that
$w' = w \circ \varphi$, and define the quotient space and
\be\label{eq:defn-MM}
\CM(M,\lambda, \vec R;\underline \gamma,\overline \gamma;J)
= \widetilde \CM(M,\lambda, \vec R;\underline \gamma,\overline \gamma;J)/\sim
\ee
to be the set of equivalence classes of contact instantons $w$ under the equivalence relation $\sim$.
\end{defn}
This definition does not depend on the choice of $k$ or $p$  as long as $k\geq 2, \, p>2$.
We call an equivalence class $[w]$ an isomorphism class of contact instantons and often just write it as $w$
by an abuse of notation whose meaning should be clear from the give context.

\subsection{Linearized operator and its ellipticity}
\label{subsec:fredholm}

Let $(\dot \Sigma, j)$ be a punctured Riemann surface, the set of whose punctures
may be empty, i.e., $\dot \Sigma = \Sigma$ is either a closed or a punctured
Riemann surface. In this subsection and the next, we lay out the precise relevant off-shell framework
of functional analysis, and  establish the Fredholm property of the linearization map.
(We recall that we have already computed the linearization of $\Upsilon$ for the closed string case
in \cite{oh:contacton}.)

We recall that both for the elliptic regularity esimates in
\cite{oh:contacton-Legendrian-bdy,oh-yso:index} and for the optimal expression of the linearization map
and its relevant calculations in \cite{oh:contacton}, we have been using the contact triad connection $\nabla$ of $(M,\lambda,J)$ and the contact Hermitian connection $\nabla^\pi$ for $(\xi,J)$ introduced in
\cite{oh-wang:connection,oh-wang:CR-map1}. Likewise we will utilize the contact triad connection
in the following presentation of the linearization of contact instantons.

Then we have the following explicit formulae thereof.

\begin{thm}[Theorem 10.1 \cite{oh:contacton}; See also Theorem 1.15
\cite{oh-savelyev}] \label{thm:linearization} In terms of the decomposition
$dw = d^\pi w + w^*\lambda\, R_\lambda$
and $Y = Y^\pi + \lambda(Y) R_\lambda$, we have
\bea
D\Upsilon_1(w)(Y) & = & \delbar^{\nabla^\pi}Y^\pi + B^{(0,1)}(Y^\pi) +  T^{\pi,(0,1)}_{dw}(Y^\pi)\nonumber\\
&{}& \quad + \frac{1}{2}\lambda(Y) (\CL_{R_\lambda}J)J(\del^\pi w)
\label{eq:DUpsilon1}\\
D\Upsilon_2(w)(Y) & = &  - \Delta (\lambda(Y))\, dA + d((Y^\pi \rfloor d\lambda) \circ j)
\label{eq:DUpsilon2}
\eea
where $B^{(0,1)}$ and $T_{dw}^{\pi,(0,1)}$ are the $(0,1)$-components of $B$ and
$T_{dw}^\pi$, where $B, \, T_{dw}^\pi: \Omega^0(w^*TM) \to \Omega^1(w^*\xi)$ are
 zero-order differential operators given by
\be\label{eq:B}
B(Y) =
- \frac{1}{2}  w^*\lambda \otimes \left((\CL_{R_\lambda}J)J Y\right)
\ee
and
\be\label{eq:torsion-dw}
T_{dw}^\pi(Y) = \pi T(Y,dw)
\ee
respectively.
\end{thm}

From the above expression of the covariant linearization of
of the section $\Upsilon = (\Upsilon_1,\Upsilon_2)$, the linearization
continuously extends to a bounded linear map
$$
D\Upsilon_{(\lambda,T)}(w): T\CW^{k,p} \to \CC\CD_{k-1,p}(M,\lambda)
$$
where we recall
\beastar
 T\CW^{k,p} & = &
\Omega^0_{k,p}\left(w^*TM,(\del w)^*T\vec R \right) \\
\CC\CD_{k-1,p}(M,\lambda) & = & \Omega^{(0,1)}_{k-1,p}(w^*\xi;J) \oplus \Omega^2_{k-2,p}(\Sigma)
\eeastar
for any choice of $k \geq 2, \, p > 2$. Using the decomposition
$$
\Omega^0_{k,p}(w^*TM,(\del w)^*T\vec R) \cong \Omega^0_{k,p}(w^*\xi,(\del w)^*T\vec R) \oplus
\Omega^0_{k,p}(\dot \Sigma,\del \dot \Sigma)\otimes R_\lambda,
$$
$D\Upsilon(w)$ can be written into the matrix form
\be\label{eq:matrixDUpsilon}
\left(\begin{matrix}\delbar^{\nabla^\pi} + T_{dw}^{\pi,(0,1)} + B^{(0,1)}
 & d\left((\cdot) \rfloor d\lambda) \circ j\right) \\
\frac{1}{2} \lambda(\cdot) (\CL_{R_\lambda}J)J \del^\pi w & -\Delta(\lambda(\cdot)) \,dA
\end{matrix}
\right).
\ee
It follows that the map $D\Upsilon(w)$ is a partial differential operator whose principal
symbol map is given by $\sigma(D\Upsilon) = \sigma(D\Upsilon_1) \oplus \sigma(D\Upsilon_2)$
where
\bea\label{eq:symbol}
\sigma(D\Upsilon_1(w))(\eta) & = & J\Pi^*\eta \nonumber\\
\sigma(D\Upsilon_2(w))(\eta) & = & \langle \lambda,\eta\rangle^2 = (\eta(R_\lambda))^2
\eea
where $\eta$ is a cotangent vector in $T^*M \setminus \{0\}$ and
has decomposition
\be\label{eq:decompose-eta}
\eta = \eta^\pi + \eta(R_\lambda)\otimes w^*\lambda.
\ee
(See \cite{lockhart-mcowen} for the discussion of general elliptic operators of mixed degree
on noncompact manifolds with cylindrical ends.)

In particular we note that the restriction $D\Upsilon_1(w)|_{\Omega^0(w^*\xi)}$ has the same
principal symbol as that of
$$
\delbar^{\nabla^\pi} : \Omega^0(w^*\xi, (\del w)^*\xi) \to \Omega^{(0,1)}(w^*\xi;J)
$$
which is the first order elliptic operator of Cauchy-Riemann type, and that
$D\Upsilon_2(w)$ has the symbol of the Hodge Laplacian acting on zero forms
$$
*\Delta: \Omega^0(\dot \Sigma,\del \dot \Sigma) \to \Omega^2(\dot \Sigma).
$$

\subsection{Fredholm theory on punctured bordered Riemann surfaces}

By the (local) ellipticity shown in the previous subsection, it remains to examine the
Fredholm property of the linearized operator $D\Upsilon(w)$. For this purpose,
we need to examine the asymptotic behavior of the operator near punctures in strip-like
coordinates.

We first decompose the section $Y \in w^*TM$ into
$$
Y = Y^\pi + \lambda(Y) \otimes R_\lambda
$$
as before. Then the matrix \eqref{eq:matrixDUpsilon} has its entries given by
\bea
D\Upsilon_1^1(w)(Y^\pi) & = & \delbar^{\nabla^\pi}Y^\pi + B^{(0,1)}(Y^\pi) +  T^{\pi,(0,1)}_{dw}(Y^\pi), \label{eq:D}\\
D\Upsilon_2^1(w)(Y^\pi) & = & d((Y^\pi \rfloor d\lambda) \circ j),\label{eq:DUpsilon21}\\
D\Upsilon_1^2(w)(\lambda(Y) R_\lambda) & = & \frac12 \lambda(Y) \CL_{R_\lambda}J J \del^\pi w,
\label{eq:DUpsilon12}\\
D\Upsilon_2^2(w)(\lambda(Y) R_\lambda) & = & - \Delta(\lambda(Y))\, dA. \label{eq:DUpsilon22}
\eea
Noting that $Y^\pi$ and $\lambda(Y)$ are independent of each other, we write
$$
Y = Y^\pi + f R_\lambda, \quad f: = \lambda(Y)
$$
where $f: \dot \Sigma \to \R$ is an arbitrary function satisfying the boundary condition
$$
Y^\pi(\del \dot \Sigma) \subset T\vec R, \quad f|_{\del \dot \Sigma} = 0
$$
by the Legendrian boundary condition satisfied by $Y$. The following is obvious from
the expression of the $D\Upsilon_i^j(w)$.
\begin{lem} Suppose that $w$ is a solution to \eqref{eq:contacton-Legendrian-bdy-intro}.
The operators $D\Upsilon_i^j(w)$ have the following continuous extensions:
\beastar
D\Upsilon_1^1(w)(Y^\pi)& : & \Omega^0_{k,p}(w^*\xi,(\del w)^*T\vec R) \to \Omega^{(0,1)}_{k-1,p}(w^*\xi;J) \\
D\Upsilon_2^1(w)(Y^\pi) &: & \Omega^0_{k,p}(w^*\xi,(\del w)^*T\vec R)
 \to
\Omega^2_{k-1,p}(\dot \Sigma) \hookrightarrow \Omega^2_{k-2,p}(\dot \Sigma) \\
D\Upsilon_1^2(w)((\cdot) R_\lambda) & : & \Omega^0_{k,p}(\dot \Sigma,\del \dot \Sigma)
 \to
\Omega^{(0,1)}_{k,p}(w^*\xi;J) \hookrightarrow  \Omega^{(0,1)}_{k-1,p}(w^*\xi;J)\\
D\Upsilon_2^2(w)((\cdot) R_\lambda) & : & \Omega^0_{k,p}(\dot \Sigma,\del \dot \Sigma) \to \Omega^2_{k-2,p}(\Sigma).
\eeastar
\end{lem}

We regard the domains of  $D\Upsilon_i^2$ for $i=1, \,2$
as $C^\infty(\dot \Sigma, \del \dot \Sigma)$ using the isomorphism
\be\label{eq:Reeb-component-RR}
C^\infty(\dot \Sigma, \del \dot \Sigma) \cong \Omega^0(\dot \Sigma, \del \dot \Sigma) \otimes R_\lambda.
\ee
We now establish the following Fredholm property of the linearized operator.

\begin{prop}\label{prop:closed-fredholm}
Suppose that $w$ is a solution to \eqref{eq:contacton-Legendrian-bdy-intro}.
Consider the completion of $D\Upsilon(w)$,
which we still denote by $D\Upsilon(w)$, as a bounded linear map
from $\Omega^0_{k,p}(w^*TM,(\del w)^*T\vec R)$ to
$\Omega^{(0,1)}(w^*\xi)\oplus \Omega^2(\Sigma)$
for $k \geq 2$ and $p \geq 2$. Then
\begin{enumerate}
\item The off-diagonal terms of $D\Upsilon(w)$ are relatively compact operators
against the diagonal operator.
\item
The operator $D\Upsilon(w)$ is homotopic to the operator
\be\label{eq:diagonal}
\left(\begin{matrix}\delbar^{\nabla^\pi} + T_{dw}^{\pi,(0,1)}+ B^{(0,1)} & 0 \\
0 & -\Delta(\lambda(\cdot)) \,dA
\end{matrix}
\right)
\ee
via the homotopy
\be\label{eq:s-homotopy}
s \in [0,1] \mapsto \left(\begin{matrix}\delbar^{\nabla^\pi} + T_{dw}^{\pi,(0,1)} + B^{(0,1)}
& s\, d\left((\cdot) \rfloor d\lambda) \circ j\right)\\
\frac{s}{2} \lambda(\cdot) (\CL_{R_\lambda}J)J (\pi dw)^{(1,0)}
 & -\Delta(\lambda(\cdot)) \,dA
\end{matrix}
\right) =: L_s
\ee
which is a continuous family of Fredholm operators.
\item And the principal symbol
$$
\sigma(z,\eta): w^*TM|_z \to w^*\xi|_z \oplus \Lambda^2(T_z\Sigma), \quad 0 \neq \eta \in T^*_z\Sigma
$$
of \eqref{eq:diagonal} is given by the matrix
\beastar
\left(\begin{matrix} \frac{\eta + i\eta \circ j}{2} Id  & 0 \\
0 & |\eta|^2
\end{matrix}\right).
\eeastar
\end{enumerate}
\end{prop}
\begin{proof} Statement (1) is a consequence of the exponential decay near the puncture \cite{oh-yso:index},
 and the compactness of Sobolev embeddings
$$
\Omega^2_{k-1,p}(\dot \Sigma) \hookrightarrow \Omega^2_{k-2,p}(\dot \Sigma), \quad
\Omega^2_{k,p}(\dot \Sigma) \hookrightarrow \Omega^2_{k-2,p}(\dot \Sigma).
$$
When $\del \dot \Sigma = \emptyset$, the same kind of statement is proved in \cite{oh:contacton}.
Essentially the same proof applies by incorporating the boundary condition.
We leave some details and explanation on the requirement $d(w^*\lambda \circ j) = 0$
to \cite{oh:contacton-gluing}.
\end{proof}

Now we are ready to wrap-up the discussion of the Fredholm property of
the linearization map
$$
D\Upsilon_{(\lambda,T)}(w): \Omega^0_{k,p}(w^*TM, (\del w)^*T\vec R;\underline \gamma,\overline \gamma) \to
\Omega^{(0,1)}_{k-1,p}(w^*\xi;J) \oplus \Omega^2_{k-2,p}(\dot \Sigma)
$$
by proving Statement (1) of Proposition \ref{prop:closed-fredholm}.

The following proposition can be derived from the arguments used by
Lockhart and McOwen \cite{lockhart-mcowen} with the incorporation of
Legendrian boundary condition which is an elliptic boundary valued problem
as shown in \cite{oh:contacton-Legendrian-bdy}.

\begin{prop}[Proposition 11.6 \cite{oh:contacton}]\label{prop:fredholm} Assume that $\underline \gamma, \, \overline \gamma$ are
nondegenerate. Then the operator
\eqref{eq:matrixDUpsilon} is Fredholm.
\end{prop}

Then by the continuous invariance of the Fredholm index, we obtain
\be\label{eq:indexDXiw}
\operatorname{Index} D\Upsilon_{(\lambda,T)}(w) =
\operatorname{Index} \left(\delbar^{\nabla^\pi} + T^{\pi,(0,1)}_{dw}  + B^{(0,1)}\right) + \operatorname{Index}(-\Delta).
\ee
The computation of index is given in \cite{oh:contacton} for the closed string case
and is given for the current open string case in \cite{oh-yso:index}.

\begin{rem}\label{rem:weighted-setting}
Suppose $\delta > 0$ satisfies the inequality
$$
0\leq \delta < \min\left\{\frac{\text{\rm gap}(\vec \gamma)}{p}, \frac{\pi}{p}\right\}
$$
where $\text{\rm gap}(\vec \gamma)$ is the spectral gap,
\be\label{eq:gap}
\text{\rm gap}(\overline \gamma,\underline \gamma)
: = \min_{\gamma_i,\gamma_j}\left\{d_{\text{\rm H}}(\text{\rm Spec}A_{(T_i,\gamma_i)}, 0),
d_{\text{\rm H}}(\text{\rm Spec}A_{(T_j,\gamma_j)}, 0)\right\}
\ee
of the asymptotic operators $A_{(T_j,z_j)}$ or $A_{(T_i,z_i)}$
associated to the corresponding punctures. Then the above Fredholm property
also holds in the weighted Sobolev space setting with exponential weight $e^{\delta|\tau|}$.
\end{rem}

\section{Generic mapping transversality under the perturbation of $J$'s}
\label{sec:moduli-space}

In this section, we briefly recall the perturbation result under the perturbation of
CR almost complex structures from \cite{oh:contacton}
proved for the closed string case, and adapt it to the case of
boundary perturbations.

Let a contact manifold $(M,\xi)$ be given.
We consider the contact forms $\lambda$ of $(M,\xi)$ such that
all Reeb chords are nondegenerate. The set of such contact forms is
residual in $\CC(M,\xi)$. (See Appendix \ref{sec:nondegeneracy-chords} for the proof.)

Then we involve the set $\CJ_\lambda(M,\xi)$ of adapted $J$'s.
We study the linearization of the map $\Upsilon^{\text{\rm univ}}$ which is the map $\Upsilon$ augmented by
the argument $J \in \CJ_\lambda(M,\xi)$. More precisely, we define the universal section
$$
\Upsilon^{\text{\rm univ}}: \CF \times  \CJ_\lambda(M,\xi) \to
\CC\CD^{\text{\rm univ}}(M,\lambda)
$$
given by
\be\label{eq:Upsilon-univ}
\Upsilon^{\text{\rm univ}}((j, w), J) = \left(\delbar_J^\pi w, d(w^*\lambda \circ j)\right)
\ee
and study its linearization at each $(j,w,J) \in (\Upsilon^{\text{\rm univ}})^{-1}(0)$.
In the discussion below, we will fix the complex
structure $j$ on $\Sigma$, and so suppress $j$ from the argument of $\Upsilon^{\text{\rm univ}}$.

The following universal linearization formula plays a crucial role in the generic transversality
result as in the case of pseudoholomorphic curves in symplectic geometry.

\begin{lem}[Theorem 1.10 \cite{oh:contacton}]
\label{lem:DY-univ} Denote by $L: = \delta J$ the first variation of $J$.
We have the linearization
$$
D_{(w,J)} \Upsilon^{\text{\rm univ}}: T_w \CF \oplus T_J \CJ_\lambda(M,\xi) \to \Omega^{(0,1)}(w^*\xi) \bigoplus \Omega^2(\dot \Sigma)\otimes R_\lambda
$$
whose explicit formula is given by
$$
D_{(w,J)} \Upsilon^{\text{\rm univ}}(Y,L) = D_1 \Upsilon^{\text{\rm univ}}(Y)  + D_2 \Upsilon^{\text{\rm univ}}(L)
$$
where we have partial derivatives
\be\label{eq:D2}
D_1 \Upsilon^{\text{\rm univ}}(Y) = D\Upsilon(Y), \quad D_2 \Upsilon^{\text{\rm univ}}(L) = \frac12 L( d^\pi u \circ j)
\ee
\end{lem}
\begin{proof} This is straightforward from the definition
$$
\delbar^\pi w = \frac{d^\pi w + J d^\pi w \circ j}{2}
$$
and the fact that the projection $\pi$ does not depend on the choice of $J$ but depends
only on $\lambda$. We omit its proof.
\end{proof}

Following the procedure of considering the set $\CJ^\ell(M,\lambda)$ of $\lambda$-adapted
$C^\ell$ CR-almost complex structures $J$ inductively as $\ell$ grows (see \cite{mcduff-salamon-symplectic}, \cite[Section 10.4]{oh:book1}
for the detailed explanation),
we denote the zero set $(\Upsilon^{\text{\rm univ}})^{-1}(0)$ by
$$
\MM(M,\lambda,\vec R;\overline \gamma, \underline \gamma;\CJ_\lambda) = \left\{ (w,J)
\in \CW^{k,p}( M, \vec R;\overline \gamma, \underline \gamma) \times \JJ_\lambda^\ell(M,\xi)
\, \Big|\, \Upsilon^{\text{\rm univ}}(w, J) = 0 \right\}
$$
which we call the universal moduli space. Denote by
$$
\Pi_2: \CW^{k,p}( M, \vec R;\underline \gamma, \overline \gamma) \times \JJ_\lambda^\ell(M,\xi)
\to \JJ_\lambda^\ell(M,\xi)
$$
the projection. Then we have
\be\label{eq:MMK}
\MM(J;\underline \gamma, \overline \gamma)
= \MM(M,\lambda, \vec R; \underline \gamma, \overline \gamma;J)
 = \Pi_2^{-1}(J) \cap \MM(M,\lambda,  \vec R;\underline \gamma, \overline \gamma).
\ee
We state the following standard statement that often occurs in this kind of generic
transversality statement via the Sard-Smale theorem.

\begin{thm}\label{thm:trans} Let $0 < \ell < k -\frac{2}{p}$.
Consider the moduli space $\MM(M,\lambda;\underline \gamma, \overline \gamma)$. Then
\begin{enumerate}
\item $\MM(M,\lambda, \vec R;\underline \gamma, \overline \gamma)$ is
an infinite dimensional $C^\ell$ Banach manifold.
\item The projection
$$
\Pi_2|_{(\Upsilon^{\text{\rm univ}})^{-1}(0)} : (\Upsilon^{\text{\rm univ}})^{-1}(0) \to \JJ_\lambda^\ell(M,\xi)
\JJ^\ell(M,\lambda)
$$
is a Fredholm map and its index is the same as that of $D\Upsilon(w)$
for a (and so any) $w \in  \MM(M,\lambda, \vec R; J;\underline \gamma, \overline \gamma)$.
\end{enumerate}
\end{thm}

An immediate corollary of Sard-Smale theorem is that for a generic choice of $J$
$$
\Pi_2^{-1}(J) \cap (\Upsilon^{\text{\rm univ}})^{-1}(0)= \MM(J;\underline \gamma, \overline \gamma)
$$
is a smooth manifold: One essential ingredient for the generic transversality under the perturbation of
$J \in \CJ_\lambda(M,\xi)$ is the usage of the following unique continuation result.

\begin{prop}[Unique continuation lemma; Proposition 12.3 \cite{oh:contacton}]
\label{prop:unique-conti}
Any non-constant contact Cauchy-Riemann map does not
have an accumulation point in the zero set of $dw$.
\end{prop}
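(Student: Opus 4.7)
The plan is to reduce the statement to a strong unique continuation theorem of Aronszajn--Hartman--Wintner type, applied to the coupled elliptic system satisfied by $dw$. Assume for contradiction that $\{dw=0\}$ has an accumulation point $z_0 \in \dot\Sigma$, so that there is a sequence $z_n \to z_0$ with $dw(z_n) = 0$; by continuity $dw(z_0)=0$ as well. The goal will be to show that $dw$ vanishes to infinite order at $z_0$, hence identically in a neighborhood, and then to promote local constancy of $w$ by a standard covering argument along paths in the connected surface $\dot\Sigma$, contradicting the non-constancy hypothesis.

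First I would work in a Darboux chart $(x_1,y_1,\dots,x_n,y_n,\eta)$ around $w(z_0)$ with $\lambda = d\eta - \sum y_i\, dx_i$ and $R_\lambda = \partial/\partial\eta$, together with a holomorphic coordinate $z=s+it$ at $z_0$. Decomposing $dw = d^\pi w + (w^*\lambda)\,R_\lambda$, the equation $\delbar^\pi w = 0$ becomes a perturbed Cauchy-Riemann system for the components of $d^\pi w$ with bounded coefficients depending smoothly on $w$ and $dw$, while $d(w^*\lambda \circ j) = 0$ yields a Laplace-type equation for the scalar $\lambda(dw)$; schematically,
\[
\delbar^{\nabla^\pi}(d^\pi w)^{(1,0)} = A\cdot d^\pi w + B \cdot w^*\lambda, \qquad \Delta(\lambda(dw)) = Q(d^\pi w, d^\pi w) + \text{l.o.t.},
\]
with the off-diagonal coupling genuinely of lower order relative to the block-diagonal principal symbol computed in Proposition \ref{prop:closed-fredholm}(3).

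Next I would promote the horizontal equation to second order by differentiating once and using the first equation to substitute, obtaining a unified Aronszajn-type differential inequality
\[
|\Delta u| \leq C\bigl(|u| + |\nabla u|\bigr), \qquad u = \bigl(d^\pi w,\ \lambda(dw)\bigr),
\]
on a punctured neighborhood of $z_0$. The Carleman estimate underlying Aronszajn's theorem then forces infinite-order vanishing of $u$ at $z_0$ given vanishing on a sequence accumulating there, and hence $u \equiv 0$ on some disc around $z_0$. Running this around any continuous path in $\dot\Sigma$ yields $dw \equiv 0$ globally, forcing $w$ to be constant, which is the desired contradiction. The main obstacle is the \emph{mixed order} of the contact instanton equation: the horizontal piece is first-order Cauchy-Riemann while the Reeb component is second-order Laplace-type, so Aronszajn's scalar theorem is not directly applicable and both components must be controlled by a \emph{single} Carleman inequality. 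The natural remedy is to exploit the Darboux normal form and the contact Hermitian connection, as in the linearization formulas of Theorem \ref{thm:linearization}, so that the coupling coefficients $A, B, Q$ are Lipschitz, after which the Aronszajn--Hartman--Wintner machinery applies to the augmented second-order system in the standard way.
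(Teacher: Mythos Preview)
The paper does not actually prove this proposition; it merely quotes it from \cite{oh:contacton}, Proposition 12.3. So there is no ``paper's own proof'' to compare against, and your proposal must be judged on its own merits.

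There is a genuine gap in your argument, and it sits precisely at the sentence ``The Carleman estimate underlying Aronszajn's theorem then forces infinite-order vanishing of $u$ at $z_0$ given vanishing on a sequence accumulating there.'' Aronszajn's strong unique continuation theorem takes infinite-order vanishing as its \emph{hypothesis} and concludes identical vanishing on a neighborhood; it does not manufacture infinite-order vanishing from a sequence of simple zeros. For a general second-order elliptic system this implication simply fails: already the harmonic function $u(s,t)=s$ on $\R^2$ has its zero set accumulating everywhere on $\{s=0\}$ without vanishing to infinite order anywhere. Your step of ``promoting the horizontal equation to second order'' therefore actively destroys the mechanism that makes the result true.

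The correct route, and the one used in the pseudoholomorphic curve literature and in \cite{oh:contacton}, is to stay at \emph{first order}. One shows that (a suitable component of) $dw$, written in a local frame, satisfies a perturbed Cauchy--Riemann system $\delbar\zeta + A\zeta = 0$ with $A$ bounded; the Carleman similarity principle then writes $\zeta = \Phi\cdot f$ with $\Phi$ invertible and $f$ genuinely holomorphic, so that zeros of $\zeta$ inherit the isolated-zeros property of holomorphic functions. That is what rules out accumulation. A secondary issue: the statement concerns contact \emph{Cauchy--Riemann} maps, i.e.\ solutions of $\delbar^\pi w = 0$ alone, so you are not entitled to invoke $d(w^*\lambda\circ j)=0$ for the Reeb component; the argument in \cite{oh:contacton} handles the Reeb direction through the structure equations derived from $\delbar^\pi w = 0$ itself rather than from the instanton closedness condition.
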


\begin{rem} The proof given in \cite{oh:contacton} utilizes the unique continuation result
through the (local) symplectization of contact instantons which become a (local) pseudoholomorphic
curves in the symplectization. We refer readers to the proof of Lemma \ref{lem:g=0}
for the unique continuation result  in the context of linearized problem whose proof is
given purely in terms of the analysis of contact instantons without taking the
symplectization. A similar proof can be also given to the nonlinear problem of
contact instantons by adapting the proof of Lemma \ref{lem:g=0}.
\end{rem}

\section{Generic transversality under the perturbation of boundaries}
\label{subsec:trans-under-bdy}

In this section, we study the problem of generic transversality under the boundary
Legendrian submanifolds imitating the arguments used in \cite{oh:fredholm} in the
framework of perturbations of Lagrangian submanifolds in symplectic geometry.

We put the following generic configuration of the Legendrian link $\vec R = (R_1, \cdots, R_n)$
for the study of generic transversality problems for the moduli space of bordered
contact instantons.

\begin{defn}[General position]\label{defn:general-position}
 We say that a Legendrian link $\vec R$ is in general position if the following hold:
\begin{enumerate}
\item Each pair $(R_i, R_j)$ is nondegenerate in the sense of Theorem \ref{thm:Reeb-chords}.
\item There is no triple $(R_i,R_j,R_k)$ for which no triple of Reeb chords that simultaneously overlaps on
an (relatively) open subsets of the images thereof.
\end{enumerate}
\end{defn}
This definition of general position being mentioned, we can perturb
the link $\vec R = \{R_1, \cdots, R_n\}$ componentwise
for the transversality studies under the perturbation of boundary conditions,
and may restrict ourselves to the case where $\vec R$ is a one-component link.

Similarly as in \cite[p.511]{oh:fredholm}, we fix a Legendrian submanifold $R_0$ and
represent each Legendrian submanifold $C^\infty$-close to the given $R_0$ as the one-jet graph
$$
\Image j^1f: = \{(x,df(x), f(x)) \in J^1R_0 \mid x \in R_0\}
$$
of a smooth function $f: R_0 \to \R$, contained  in a neighborhood of
the zero section $U_{R_0} \subset J^1R_0$ via the Darboux-Weinstein chart
$$
\Phi_{R_0}: V_{R_0} \subset M \to U_{R_0} \subset J^1R_0
$$
where $V_{R_0} \subset M$ is a neighborhood of $R_0$.
Then we can canonically represent each Legendrian submanifold $R$ $C^\infty$-close
to $R_0$ as $R = \phi_R(R_0)$ where
$$
\phi_R := \Phi_{R_0}^{-1} \circ j^1f: R_0 \to M
$$
where $f$ is the unique function on $R_0$ with $\Image j^1f= \Phi_{R_0}(R)$.
This map $\phi_R$ can be extended to an ambient contact isotopy $\psi_t: M \to M$
so that $\phi_R = \psi_1|_{R_0}$ so that  $\psi_1 \in \Cont_0(M,\xi)$ in particular.

We denote by $\CN(R_0) = \CN(R_0; \Phi_{R_0})$ the set of Legendrian submanifolds
given by
\be\label{eq:CNR0}
\CN(R_0): = \{ R \in \mathfrak{Leg}(M,\xi) \mid R = \Phi_{R_0}^{-1}(\Image j^1f), \,
f: R_0 \to \R, \, \Image j^1f \subset U_{R_0}\}.
\ee
Then we consider the subset
$$
\{(w,R) \mid R \subset U_{R_0},\, w(\del \dot \Sigma) \subset R \} \subset \CF \times \CN(R_0)
$$
and
\be\label{eq:moduli-near-R0}
\CM = (\CF  \times \CN(R_0))\cap \Upsilon^{-1}(0).
\ee

Following \cite{oh:fredholm}, we introduce the following notion.

\begin{defn}[Boundary somewhere injectivity] We say a map $w: \dot \Sigma \to M$ is
boundary somewhere injective if  there
exists a (open) subset $A \subset \del \dot \Sigma$ such that
$$
w^{-1}(w(z)) \cap \del \dot \Sigma = \{z\} \quad \text{\rm for $z \in A$}.
$$
\end{defn}

We consider the set of pairs
$$
(w,R) \in \CF((\dot \Sigma, \del \dot\Sigma), (M,R)) \times \mathfrak{Leg}(M,\xi)
$$
and the universal section map $\Upsilon = (\Upsilon_1, \Upsilon_2)$ of the bundle $\CC\CD \to \CF$
given by
$$
\Upsilon_1(w,R) = \delbar^\pi w, \quad \Upsilon_2(w,R) = d(w^*\lambda \circ j).
$$
Under the above general position hypothesis, we prove the following

\begin{thm}\label{thm:trans-under-bdy} Let $(M,\xi)$ be a contact manifold equipped with a
a contact form $\lambda$. We consider the same equation
considered in Theorem \ref{thm:trans}. Fix $J$ and $k$ and consider $\vec R$ in general position
given above. Then the subset
$$
\CM \subset \CF \times \CN(R_0)
$$
is a smooth $C^\infty$ submanifold (as a Frechet submanifold) near $w$ satisfying
the \emph{boundary somewhere injectivity}.
\end{thm}
\begin{proof} By the implicit function theorem, it is enough to prove
the universal section map $\Upsilon^{\text{\rm univ}}$ is a submersion at
$((w,J), \vec R)$ for any boundary somewhere injective $w$.

As usual in this kind of analysis, we consider the fiber bundle
$$
\CW^{k,p}(M, (\cdot);\underline \gamma, \overline \gamma ) \to {\mathcal Leg}(M,\xi)
$$
whose fiber at $(w,J, \vec{R})$ is given by
$$
\CW^{k,p}(M, \vec R;\underline \gamma, \overline \gamma).
$$
For the simplicity of exposition, we fix all $R_i$'s except $R_0$.
We consider a perturbation of $R_0$ under the contact isotopy of the type
$$
\psi(R_0), \quad \psi = \psi_H^1 \in \Cont_0(M,\xi)
$$
described as above.

Now we can modify the argument used in the proof of
the main theorem in \cite{oh:fredholm} as follows.
We denote by $\CL(M)$ and $\CL(R_0)$ the spaces of smooth Moore paths $(\gamma, T)$ on $M$ and $R_0$
respectively. (In \cite{oh:fredholm}, free loops are considered the set of which  are denoted by $\Omega(M)$ and $\Omega(L_0)$ respectively.)

We consider the parameterized smooth map
$$
\Upsilon^{\CL eg}: \CF \times \CN(R_0) \to \CC\CD \times \CL(M)
$$
by
$$
\Upsilon^{\CL eg}(w,R) : =\left (\Upsilon(w), \phi_R^{-1}\circ w|_{\del \dot \Sigma}\right).
$$
Then by definition we have
$$
\CM = (\Upsilon^{\CL eg})^{-1}(\{0\} \times \CL(R_0)).
$$
At this point, we can duplicate the rest of the proof of \cite[p.514-516]{oh:fredholm}
with obvious modification which is now in order.

To prove surjectivity, we will prove the $L^2$-cokernel vanishes, i.e.,
$$
\left(\Image D\Upsilon^{\CL eg}(w,R)
+ \{0\} \oplus T_{\phi_R^{-1} \circ w|_{\del \dot \Sigma}} \CL(R_0)\right)^\perp = 0
$$
as a subset of
$$
\Omega^{(1,0)}(w^*\xi) \oplus C^\infty(\dot \Sigma) \bigoplus T_{\phi_{R_0}^{-1} \circ w}\CL(M)
$$
after a suitable Sobolev completion: For the space $\Omega^{(1,0)}(w^*\xi)$,
we take the dual weighted Sobolev space $W_{-k,q;-\delta}$ for $W_{k,p;\delta}$
with $1/p + 1/q = 1$.
 (See \cite{lockhart-mcowen}, \cite[p.515]{oh:fredholm} for the detailed explanation
which however will not play much role in the calculation henceforth.)

Denote by $NR_0$ the normal bundle of $R_0$ (with respect to any given adapted metric $g$).
If
$$
((\eta,g), \alpha) \in \left(\Image D\Upsilon^{\CL eg}(w,R)
+ \{0\} \oplus T_{\phi_R^{-1} \circ w|_{\del \dot \Sigma}} \CL(R_0)\right)^\perp,
$$
it in particular implies that $\alpha$ satisfies
\be\label{eq:alphainNR0}
\alpha(z) \in N_{\phi_R\circ w(z)}R_0, \quad z \in \del \dot \Sigma.
\ee
(See \cite[p.516]{oh:fredholm} for the relevant calculations.)

Furthermore we denote by $\eta^{(1,0)}$ the value of the $(1,0)$-form $\eta$
evaluated against
$$
\frac{\del}{\del z}\cong
\frac12 \left(\frac{\del}{\del x} - j \frac{\del}{\del y}\right)
$$
\be\label{eq:eta10}
\eta^{(1,0)}: = \eta_x + J \eta_y
\ee
where $\eta = \eta_x dx + \eta_y dy$ in an isothermal coordinate $(x,y)$ of $\dot \Sigma$
adapted to $\del \dot \Sigma$.

By somewhat more complicated calculations involving
the integration by parts similarly as in \cite[p.516]{oh:fredholm},
we derive the following $L^2$-adjoint equation of associated linearized equation.
(See  the calculations given later in Section \ref{sec:proof}).)
We postpone its proof till the next section, Section \ref{sec:adjoint-eq}.

\begin{prop}\label{prop:L2-cokernel} Consider the pair
$$
((\eta,g),\alpha) \in \CC\CD \times T_{\phi_R^{-1} \circ w|_{\del \dot \Sigma}} \CL(M)
$$
and recall the correspondence
$$
(\eta,g) \longleftrightarrow \eta + g\, R_\lambda
$$
given by the identification \eqref{eq:Reeb-component-RR}.

Assume that $w$ is boundary somewhere injective and
 let $A \subset \del \dot \Sigma$ be a  nonempty (open) subset such that
$$
w^{-1}(w(z)) \cap \del \dot \Sigma = \{z\} \quad \text{\rm for $z \in A$}.
$$
Suppose that the pair $((\eta,g), \alpha)$ lies in the $L^2$-cokernel of $D\Upsilon^{\CL eg}(w,R)$.
Then the following hold and vice versa:
\begin{enumerate}
\item
$(Y,\alpha)$ satisfies
\be\label{eq:adjoint-eta}
\begin{cases}
(\delta^{\nabla^{\pi(0,1)} } + T^{\pi(1,0)} + B^{(1,0)}) (\eta^{(1,0)})
- J  \langle dg, dw \rangle  = 0 & \quad \text{\rm on }\,\dot \Sigma\\
(J \eta^{(1,0)} + g dw)\left(\frac{\del}{\del \nu}\right) -\alpha = 0
& \quad \text{\rm on }\, \del \dot \Sigma
\end{cases}
\ee
and
$$
\alpha^\perp = 0 \quad \text{\rm on }\, A.
$$
\item  The function $g$ satisfies
\be\label{eq:adjoint-g}
\begin{cases}
\Delta g -  \frac 12 g \left \langle (\CL_{R_\lambda}J)( J d^\pi w), \eta
\right \rangle  = 0, & \quad \text{\rm on }\, \dot \Sigma\\
g = 0 = \frac{\del g}{\del \nu}  & \quad \text{\rm on }\, \del \dot \Sigma.
\end{cases}
\ee
\end{enumerate}
\end{prop}

The equations \eqref{eq:adjoint-eta} and \eqref{eq:adjoint-g}
are nothing but the analog of the one right above \cite[Equation (3.13)]{oh:fredholm}.
In terms of the isothermal coordinates $(x,y)$ adapted to $\del \dot \Sigma$ and
in the usual convention for the calculus of vector valued differential forms
(see \cite{wells} or \cite[Appendix B]{oh-wang:CR-map1}), we can
express
\bea\label{eq:eta10}
\langle dg, dw \rangle &: =&  dg \wedge * dw
= \left(\frac{\del g}{\del x} \frac{\del w}{\del x} + \frac{\del g}{\del y} \frac{\del w}{\del y}\right)
\nonumber\\
J \eta^{(1,0)} \left(\frac{\del}{\del \nu}\right) & = & \eta_x - J\eta_y
\eea
for the one-form $\eta = \eta_x \, dx + \eta_y \, dy$.

Towards our goal of proving submersion property of the universal section map
$\Upsilon^{\text{\rm univ}}$, i.e., surjectivity of the linearization map
$\Upsilon^{\text{\rm univ}}_{(w,J),\vec R)}$ for boundary somewhere injective $w$,
we first derive the following from the equation \eqref{eq:adjoint-g}.

\begin{lem}\label{lem:g=0} $g = 0$.
\end{lem}
\begin{proof} We note that $g$ satisfies the following 2nd order linear elliptic equation
of the Laplcian type
\be\label{eq:Deltag+Ag}
\Delta g + A g = 0, \quad A = - \frac 12 \left \langle (\CL_{R_\lambda}J)( J d^\pi w), \eta
\right \rangle
\ee
where the coefficient function $A$ is smooth.
We apply the unique continuation as follows.

Let $z_0 \in \del \dot \Sigma$ be a point and $U \subset \dot \Sigma$ be its neighborhood.
Using the isothermal coordinate $(x,y)$ centered at $z_0$ we may identify $U$
with a semi-disc $D_\delta = \{(x,y) \mid |x| < \delta, \, y\geq 0\}$. Then the boundary
condition becomes
\be\label{eq:Cauchy-data}
g(x,0) = 0 = \frac{\del g}{\del y}(x,0)
\ee
for all $(x,0) \in \{y=0\} \cap D_\delta$. Then by Aronszajin's
unique continuation, more precisely \cite[Remark 2]{aronszajin} applied to
 \eqref{eq:Deltag+Ag} \emph{with the Cauchy data \eqref{eq:Cauchy-data}},
we conclude $g \equiv 0$. (Note that the Cauchy data together with the equation \eqref{eq:Deltag+Ag}
implies vanishing of infinity jets of $g$ along the boundary $U \cap \del \dot \Sigma$,
in particular at $z_0$. Since $z_0$ lies in the boundary of the domain $D_\delta$,
we apply \cite[Remark 2]{aronszajin}), not the commonly used version that 
usually applies to an interior point.)
\end{proof}

Then \eqref{eq:adjoint-eta} is reduced to
\be\label{eq:adjoint-eta-2}
\begin{cases}
\left(\delta^{\nabla^{\pi(0,1)} } + T^{\pi(1,0)} + B^{(1,0)}\right) (\eta^{(1,0)}) = 0
& \quad \text{\rm on }\,\dot \Sigma\\
-J \eta^{(1,0)} -\alpha = 0  & \quad \text{\rm on }\, \del \dot \Sigma,
\end{cases}
\ee
and
$$
\alpha^\perp = 0 \quad \text{\rm on }\, A.
$$
By combining \eqref{eq:alphainNR0} and the last
two boundary conditions, we derive
$$
\eta^{(1,0)}  = 0 \quad  \text{\rm on }\, A \subset \del \dot \Sigma.
$$
Then we derive $\eta = 0$ by the unique continuation from the first equation of
\eqref{eq:adjoint-eta}
$$
-\del^{\nabla^\pi} \eta + T_{dw}^{\pi, (1,0)} \eta + B^{(1,0)}\eta = 0
$$
which is of Cauchy-Riemann type. Then by back substitution of $\eta = 0$, we derive
$\alpha = 0$ again from $-J \eta^{(1,0)} - \alpha = 0$ on $\del \dot \Sigma$.
(See the proof of Proposition 3.4 in \cite[p.516]{oh:fredholm} for a similar argument
used.)

Finally we examine \eqref{eq:adjoint-g}.
Recall $\dot \Sigma$ has non-empty boundary punctures each of which has
strip-like coordinates $(\tau,t) \in \pm [0,\infty) \times [0,1]$. By the asymptotic convergence result of finite energy contact instantons $w$
and the given hypothesis, there exists a sufficiently large $R > 0$ such that
$dw|_{\del \dot \Sigma} \neq 0$ on $\dot \Sigma \cap \{(\tau,t) \mid t = 0, \, 1, \, |\tau| \geq R\}$
for a sufficiently large $R > 0$.  It is immediate to check the vanishing of $g$ by the
finite energy condition that such a harmonic function satisfying $g = 0 = \frac{\del g}{\del \nu}$
 must vanish which can be seen by expanding the finite energy harmonic function $g = g(\tau,t)$
 by a trigonometric series on the strip-like region.

Combining all the above, we have finished the proof of Theorem \ref{thm:trans-under-bdy}.
\end{proof}

An immediate corollary of Sard-Smale theorem then is the following generic
transversality result.

\begin{thm}
There exists a residual subset of $\vec R = (R_1, \ldots, R_k)$
of Legendrian submanifolds such that  the moduli space
$\MM(M,\lambda, \vec R;\underline \gamma, \overline \gamma;J) $ is transversal
so that it becomes a finite dimensional smooth manifold of given Fredholm index.
\end{thm}
It remains to prove that the $L^2$-cokernel element $((\eta,g),\alpha)$ is
characterized by the equations \eqref{eq:adjoint-eta} and \eqref{eq:adjoint-g}
which is now in order.

\section{Derivation of parametric $L^2$-adjoint equation}
\label{sec:adjoint-eq}

Our primary goal is to prove Proposition \ref{prop:L2-cokernel}, i.e., to show that
 whenever $((\eta,g), \alpha)$ lies in the $L^2$-cokernel of
 $D\Upsilon^{\CL eg}(w, \vec R)$ satisfies
$$
\langle D_w\Upsilon^{\CL eg}(w, \vec R)(Y, X_f), ((\eta,g), \alpha) \rangle = 0
$$
for all $Y$, then it satisfies \eqref{eq:adjoint-eta} and \eqref{eq:adjoint-g}
in Proposition \ref{prop:L2-cokernel}.

The entirety of the section will be occupied by the proof of this claim.
We first recall the correspondence \eqref{eq:Reeb-component-RR}.
Let $((\eta, g), \alpha) $ satisfy
$$
\left\langle \left(D\Upsilon^{\CD eg}(w,\vec R)\right)^\dagger((\eta,g),\alpha), (Y, X_f)
\right \rangle
= 0
$$
for all $Y \in T_w \CF$ and $f \in C^\infty(R_0)$. More explicitly, the equation is
\be\label{eq:int-L2-adjoint}
\int_{\dot \Sigma} \langle D_w\Upsilon^{\CL eg}(w,\vec R)(Y,X_f), (\eta,g) \rangle
+ \int_{\del \dot \Sigma} \langle \alpha, Y|_{\del \dot \Sigma}- X_f(w)|_{\del \dot \Sigma} \rangle = 0.
\ee
By definition, we have
$$
D_w\Upsilon^{\CL eg}(w,\vec R)(Y,X_f) = D_\Upsilon(w)(Y)
$$
where $D\Upsilon(w)$ is the linearization map \eqref{eq:DUpsilonw} of the section $\Upsilon$
defined by \eqref{eq:Upsilon} for the given Legendrian boundary condition $\vec R$.

Utilizing this, we  decompose the first integral of
\eqref{eq:int-L2-adjoint} into
\bea\label{eq:full-integral}
\int_{\dot \Sigma} \langle D_w\Upsilon^{\CL eg} (w,\vec R)(Y,X_f), (\eta,g) \rangle
& = & \int_{\dot \Sigma} \langle D\Upsilon_1(w)(Y), \eta \rangle
\nonumber\\
&{}& + \int_{\dot \Sigma} \langle D\Upsilon_2(w)(Y),\, g \, R_\lambda \rangle.
\eea
By the elliptic regularity, $(\eta,g)$ is smooth and so we can do the integration by parts.

For the calculation of the first integral, we recall
$$
D\Upsilon_1((w)(Y) =  \left(\delbar^{\nabla^\pi} + B^{(0,1)}
+ T^{\pi(0,1)}_{dw}\right)(Y^\pi)
+\frac12 \lambda(Y)  \left\langle (\CL_{R_\lambda}J)(J \del^\pi w), \eta \right \rangle.
$$
Here we recall the property of the linear map $\CL_{R_\lambda} J(\xi) \subset (\xi)$.

We take an isothermal coordinate $(x,y)$
near the boundary so that $h = dx^2 + dy^2$, $\del_x$ is tangent to $\del \dot \Sigma$ and $\del_y$ is
inward normal thereto. In particular, for the area element $dA$ on $\dot \Sigma$ and
the arc-length element $ds$ of $\dot \Sigma$ which are globally
defined on $\dot \Sigma$, we have
\be\label{eq:ds-dx}
dA = dx \wedge dy, \quad ds = dx
\ee
for any isothermal coordinate $(x,y)$ along the boundary $\del \dot \Sigma$ chosen as above.

Then we write
$\eta = \eta_x\, dx + \eta_y\, dy$ and
\beastar
\nabla^\pi Y^\pi & = & \nabla^\pi_{\del_x}Y^\pi \, dx +  \nabla^\pi_{\del_y}Y^\pi \, dy \\
(\nabla^\pi Y^\pi)\circ j & = & - \nabla^\pi_{\del_x}Y^\pi \, dy +  \nabla^\pi_{\del_y}Y^\pi \, dx.
\eeastar
Therefore we obtain
\beastar
\langle \delbar^{\nabla^\pi} Y^\pi, \eta\rangle \, dA & = &
\langle \delbar^{\nabla^\pi} Y^\pi, \eta\rangle dx \wedge dy \\
& = & d \left(- \langle Y^\pi, \eta_y - J\eta_x\rangle \, dx
+ \langle Y^\pi,\eta_x + J \eta_y \rangle dy\right) \\
& {}& + \langle Y^\pi, (- \nabla_{\del_x}^\pi + J\nabla^\pi_{\del_y}) (\eta_x + J \eta_y)  \rangle
\, dx\wedge dy.
\eeastar
We write
$$
(- \nabla_{\del_x}^\pi + J\nabla^\pi_{\del_y}) (\eta_x + J \eta_y)
=: \delta^{\nabla^{\pi(0,1)}}(\eta^{(1,0)}).
$$
Substituting these into the first integral of \eqref{eq:full-integral}, we obtain
\beastar
 \int_{\dot \Sigma} \langle D\Upsilon_1(w)(Y), (\eta,g) \rangle
 & = &
\int_{\dot \Sigma} \langle Y^\pi, (\delta^{\nabla^{\pi(0,1)} } + T^{\pi(1,0)}
+ B^{(1,0)}) (\eta^{(1,0)})
\rangle\\
&{}& + \int_{\del \dot \Sigma} - \langle Y^\pi, \eta_y - J\eta_x\rangle \, dx
 \\
 &{}& + \int_{\dot \Sigma} \frac12 g\, \lambda(Y)
 \left\langle (\CL_{R_\lambda}J)(J \del^\pi w), \eta \right \rangle.
\eeastar
Rearranging the terms around, we get
\bea
&{}& \int_{\dot \Sigma} \langle D\Upsilon_1((w,J),\vec R)(Y), (\eta,g)\rangle \nonumber\\
& = &
\int_{\dot \Sigma} \langle Y^\pi, (\delta^{\nabla^{\pi(0,1)} } + T^{\pi(1,0)} + B^{(1,0)}) (\eta^{(1,0)})
\rangle \nonumber\\
&{}& + \int_{\dot \Sigma}  \frac12 g\, \lambda(Y)
 \left\langle (\CL_{R_\lambda}J)(J \del^\pi w), \eta \right \rangle\, dA
\nonumber\\
&{}& + \int_{\del \dot \Sigma} - \langle Y^\pi, \eta_y - J\eta_x\rangle \, dx
\label{eq:first-integral}
\eea

Now we compute the second integral of \eqref{eq:full-integral}
$$
 \int_{\dot \Sigma} \langle D\Upsilon_2((w,J),\vec R)(Y)\, g \rangle
 = \int_{\dot \Sigma} g (-\Delta (\lambda(Y)))\, dA + g d((Y^\pi \rfloor d\lambda)\circ j).
$$
For the first, we apply Green's formula and the Legendrian boundary condition
which implies $\lambda(Y) = 0$ on $\del \dot \Sigma$, and get
\be\label{eq:Green's}
\int_{\dot \Sigma} g (-\Delta (\lambda(Y)))\, dA =
\int_{\dot \Sigma} -\Delta g\,  (\lambda(Y)))\, dA + \int_{\del \dot \Sigma}
- g  \frac{\del \lambda(Y)}{\del \nu} + \frac{\del g}{\del \nu} \lambda(Y).
\ee
For the second, we get
\beastar
 \int_{\dot \Sigma} g d((Y^\pi \rfloor d\lambda)\circ j) & = &
 -  \int_{\dot \Sigma} dg \wedge (Y^\pi \rfloor d\lambda)\circ j)
+ \int_{\del \dot \Sigma} g (Y^\pi \rfloor d\lambda)\circ j) \\
& = &  \int_{\dot \Sigma} dg \circ j \wedge (Y^\pi \rfloor d\lambda)
+ \int_{\del \dot \Sigma} g d\lambda \left(Y^\pi, \frac{\del w}{\del \nu}\right).
\eeastar
By summing the above two, we get
\bea\label{eq:second-integral}
&{}& \int_{\dot \Sigma} g (-\Delta (\lambda(Y)))\, dA + g d((Y^\pi \rfloor d\lambda)\circ j)
\nonumber\\
& = & \int_{\dot \Sigma} -\Delta g\,  (\lambda(Y)))\, dA + \int_{\del \dot \Sigma}
- g  \frac{\del \lambda(Y)}{\del \nu} + \frac{\del g}{\del \nu} \lambda(Y) \nonumber\\
&{}& + \int_{\dot \Sigma} dg \circ j \wedge (Y^\pi \rfloor d\lambda)
+ \int_{\del \dot \Sigma} g d\lambda \left(Y^\pi, \frac{\del w}{\del \nu}\right)
\nonumber \\
& = & \int_{\dot \Sigma} -\Delta g\,  (\lambda(Y)))\, dA
+  \int_{\dot \Sigma} d\lambda \left(Y^\pi, \frac{\del g}{\del x} \frac{\del w}{\del x}
+ \frac{\del g}{\del y} \frac{\del w}{\del y} \right) \nonumber\\
&{}& + \int_{\del \dot \Sigma} - g  \frac{\del \lambda(Y)}{\del \nu}+ \frac{\del g}{\del \nu} \lambda(Y)
+ \int_{\del \dot \Sigma} g d\lambda \left(Y^\pi, \frac{\del w}{\del \nu}\right)
\eea
By adding \eqref{eq:first-integral} and \eqref{eq:second-integral}, we obtain
\beastar
&{}& \int_{\dot \Sigma} \langle D\Upsilon((w,J),\vec R)(Y), (\eta,g) \rangle \nonumber\\
& = & \int_{\dot \Sigma} \langle Y^\pi, (\delta^{\nabla^{\pi(0,1)} } + T^{\pi(1,0)} + B^{(1,0)}) (\eta^{(1,0)})
\rangle \nonumber\\
&{}& + \int_{\dot \Sigma} -\Delta g\,  (\lambda(Y)))\, dA
+ \int_{\del \dot \Sigma} - g  \frac{\del \lambda(Y)}{\del \nu}+ \frac{\del g}{\del \nu} \lambda(Y) \nonumber\\
&{}& + \int_{\dot \Sigma} \frac12 g\, \lambda(Y)
 \left\langle (\CL_{R_\lambda}J)(J \del^\pi w), \eta \right \rangle \, dA
\nonumber\\
&{}& + \int_{\del \dot \Sigma} - \langle Y^\pi, \eta_y - J\eta_x\rangle \, dx
   \nonumber\\
 &{}& + \int_{\dot \Sigma} d\lambda \left(Y^\pi, \frac{\del g}{\del x} \frac{\del w}{\del x}
+ \frac{\del g}{\del y} \frac{\del w}{\del y} \right)
+ \int_{\del \dot \Sigma} g d\lambda \left(Y^\pi, \frac{\del w}{\del \nu}\right)\, ds.
\eeastar
 By rearranging terms, we get
 \bea\label{eq:1st-row-integral}
&{}& \int_{\dot \Sigma} \langle D\Upsilon((w,J),\vec R)(Y), (\eta,g) \rangle \nonumber\\
& = & \int_{\dot \Sigma} \langle Y^\pi, (\delta^{\nabla^{\pi(0,1)} } + T^{\pi(1,0)} + B^{(1,0)}) (\eta^{(1,0)})
\rangle \nonumber\\
 &{}& + \int_{\dot \Sigma} d\lambda \left(Y^\pi, \frac{\del g}{\del x} \frac{\del w}{\del x}
+ \frac{\del g}{\del y} \frac{\del w}{\del y} \right) \, dx \wedge dy \nonumber \\
&{}& + \int_{\dot \Sigma} -\Delta g\,  \,\lambda(Y)\, dA
 + \int_{\dot \Sigma}  \frac12 g\, \lambda(Y) \left\langle (\CL_{R_\lambda}J)(J \del^\pi w), \eta \right \rangle \, dA \nonumber \\
&{}& + \int_{\del \dot \Sigma} \left(- g  \frac{\del \lambda(Y)}{\del \nu} + \frac{\del g}{\del \nu} \lambda(Y)\right)\,
ds
\nonumber\\
&{}& + \int_{\del \dot \Sigma}  - \langle Y^\pi, \eta_y - J\eta_x\rangle \, dx
  + \int_{\del \dot \Sigma} g \, d\lambda \left(Y^\pi, \frac{\del w}{\del \nu}\right)\, ds.
\eea
Substituting this into \eqref{eq:int-L2-adjoint} followed by some rearrangement, we obtain
\beastar
0 & = & \int_{\dot \Sigma} \left\langle Y^\pi, (\delta^{\nabla^{\pi(0,1)} }
 + T^{\pi(1,0)} + B^{(1,0)}) (\eta^{(1,0)}) - J  \left(\frac{\del g}{\del x} \frac{\del w}{\del x}
+ \frac{\del g}{\del y} \frac{\del w}{\del y} \right)\right\rangle \, dA \\
&{}& +\int_{\dot \Sigma} \left(-\Delta g
 + \frac12 g\, \left\langle (\CL_{R_\lambda}J)(J\del^\pi w),\eta \right \rangle
 \right)\,  \lambda(Y)\ \, dA
\nonumber\\
&{}& + \int_{\del \dot \Sigma} \left(- g  \frac{\del \lambda(Y)}{\del \nu}+ \frac{\del g}{\del \nu} \lambda(Y) \right)\, ds\\
 &{}& + \int_{\del \dot \Sigma} - \left\langle Y^\pi, \eta_y - J\eta_x + g J\frac{\del w}{\del \nu}
-  \alpha \right \rangle \, ds \nonumber\\
&{}& - \int_{\del \dot \Sigma} \langle \alpha, X_f(w|_{\del \dot \Sigma})  \rangle\, ds
\eeastar
for all $(Y,f)$. From this and the fact that the choices of $\eta$ and of $\lambda(Y)$
are completely independent of each other,
we have derived the following equation
\be
\begin{cases}
(\delta^{\nabla^{\pi(0,1)} } + T^{\pi(1,0)} + B^{(1,0)}) (\eta^{(1,0)})
- J  \left(\frac{\del g}{\del x} \frac{\del w}{\del x}  + \frac{\del g}{\del y} \frac{\del w}{\del y} \right) = 0
& \quad \text{\rm on }\,\dot \Sigma\\
\eta_y - J\eta_x - g J\frac{\del w}{\del \nu} -\alpha= 0  & \quad \text{\rm on }\, \del \dot \Sigma
\end{cases}
\ee
and
\be\label{eq:g}
\begin{cases}
\Delta g -\frac12\mathcal{} g\, \left\langle (\CL_{R_\lambda}J)(J\del^\pi w) ,\eta \right\rangle
= 0 &  \quad \text{\rm on }\, \dot \Sigma\\
g = 0 = \frac{\del g}{\del \nu}  & \quad \text{\rm on }\, \del \dot \Sigma
\end{cases}
\ee
and
$$
\alpha^\perp = 0 \quad \text{\rm on }\, A.
$$
Here the vanishing $g = 0 = \frac{\del g}{\del \nu} $ follows from the integral
$$
 \int_{\del \dot \Sigma} \left(- g  \frac{\del \lambda(Y)}{\del \nu}+ \frac{\del g}{\del \nu} \lambda(Y)
 \right) \, dA
$$
since we can freely choose the function $f: = \lambda(Y)$ so that
the value of $\frac{\del f}{\del \nu}$ can be made arbitrary with the value of $f$ fixed
and vice versa.

Recalling the coordinate expression given in \eqref{eq:eta10},
we have finished the proof of Proposition \ref{prop:L2-cokernel}.

\part{Generic evaluation transversality}

\section{The evaluation transversality: statement}
\label{sec:ev-transverse-statement}

In this section, we start with the discussion on another important general ingredient
of the applications of contact instantons
to contact topology,  the evaluation map transversality,
 similarly as in the case of pseudoholomorphic curves.

We first recall the off-shell setting of the study of linearized operator in Theorem \ref{thm:linearization}:
Let $(M,\xi)$ be a contact manifold and consider contact triads $(M,\lambda,J)$ and
let $\vec R = (R_1, R_2, \ldots, R_k)$ be a Legendrian link.
We consider the associated contact instanton
equation
\be\label{eq:contacton-Legendrian-bdy}
\begin{cases}
\delbar^\pi w = 0, \, \quad d(w^*\lambda \circ j) = 0\\
w(\overline{z_iz_{i+1}}) \subset R_i, \quad i = 1, \ldots, k
\end{cases}
\ee
for a map $w:(\dot \Sigma, \del \dot \Sigma) \to (M,\vec R)$ with the boundary
condition given as above.

We consider the moduli space
$$
\CM(M,\lambda,\vec R;J):= \CM((\dot \Sigma,\del \dot \Sigma),(M,\vec R);J),
\quad \vec R = (R_1,\cdots, R_k)
$$
 of finite energy maps $w: \dot \Sigma \to M$ satisfying the equation
\eqref{eq:contacton-Legendrian-bdy}.

We also consider the space given in \eqref{eq:offshell-space}
$$
\CF(M,\lambda, \vec R;\underline \gamma,\overline \gamma)
$$
consisting of smooth maps satisfying the boundary condition \eqref{eq:bdy-condition}
and the asymptotic condition \eqref{eq:limatinfty}. Since we will not vary the pair $\underline \gamma,\overline \gamma$, we will often just write
$$
\CF = \CF(M,\lambda,\vec R) = \CF(M,\lambda, \vec R;\underline \gamma,\overline \gamma)
$$
and denote its marked version with $\ell$ interior and $k$ boundary marked points by
\be\label{eq:CFkell}
\CF_{(\ell,k)}(M,\lambda,\vec R).
\ee
We again consider the covariant linearized operator
$$
D\Upsilon(w): \Omega^0(w^*TM,(\del w)^*T\vec R) \to
\Omega^{(0,1)}(w^*\xi) \oplus \Omega^2(\Sigma)
$$
of the section
$$
\Upsilon: w \mapsto \left(\delbar^\pi w, d(w^*\lambda \circ j)\right), \quad
\Upsilon: = (\Upsilon_1,\Upsilon_2)
$$
as before.

We will treat the two cases,  evaluation at an interior marked point and one
at a boundary marked point,
separately. We denote by the subindex $(\ell,k)$ the number of interior and boundary marked points respectively.

Consider the parameterized marked moduli space
\beastar
&{}& \CM_{(1,0)}(M,\lambda, \vec R;\CJ_\lambda) \\
& = & \{((j,w),J, z) \mid w: \Sigma \to
M, \, \Upsilon(J,(j,w))  = 0,\, \, w(\del \dot \Sigma) \subset \vec R,\,  z \in \Int \dot \Sigma \}.
\eeastar
The evaluation map $\ev^+: \CM_{(1,0)}(M,\lambda, \vec R;J) \to M$ is defined by
$$
\ev^+((j,w),z) = w(z).
$$
We then have the fibration
$$
\widetilde \CM_{(1,0)}(M,\lambda, \vec R;\CJ_\lambda)  = \bigcup_{J \in \CJ_\lambda}
\widetilde \CM_{(1,0)} ((\dot \Sigma,\del \dot \Sigma),(M, \vec R);J)
\to \CJ_\lambda
$$
and
$$
\widetilde \CM_{(1,0)}^{\text{\rm inj}}(M, \lambda, \vec R;\CJ_\lambda)
$$
to be the open subset of
$\widetilde \CM_{(1,0)}(M,\lambda, \vec R;\CJ_\lambda)$ consisting of somewhere injective
contact instanton pairs $((j,w),J)$.
We have the universal ($0$-jet) evaluation map
$$
\Ev^+: \widetilde \CM_{(1,0)}(M,\lambda, \vec R;\CJ_\lambda) \to M.
$$
The basic generic transversality is the following.

\begin{thm}[$0$-jet evaluation transversality]\label{thm:0-jet}
The evaluation map
$$
\Ev^+: \widetilde \CM_{(1,0)}(M,\lambda, \vec R;\CJ_\lambda) \to M
$$
is a submersion. The same holds for the boundary evaluation map
$$
\Ev_\del: \widetilde \CM_{(0,1)}(M,\lambda, \vec R;\CJ_\lambda) \to \vec R.
$$
\end{thm}

\section{The interior evaluation transversality: proof}
\label{sec:ev-transverse-proof}

We closely follow the scheme exercised for the proof of
evaluation transversality given in \cite[Section 10.5]{oh:book1} which
in turn follows the scheme of the generic 1-jet transversality results proved
in \cite{oh-zhu:ajm}, \cite{oh:highjet} for the case of pseudoholomorphic curves in symplectic geometry.

An important ingredient in their proofs is the following
structure theorem  of the distributions with point support from
\cite[Section 4.5]{gelfand},  \cite[Theorem 6.25]{rudin} whose proof we refer readers thereto.
\begin{thm}[Distribution with point support]\label{thm:gelfand}
\index{distribution with point support}
Suppose $\psi$ is a distribution on open
subset $\Omega \subset \R^n$ with $\supp \psi \subset \{p\}$ and of finite
order $N < \infty$. Then $\psi$ has the form
$$
\psi = \sum_{|\alpha| \leq N} D^\alpha \delta_p
$$
where $\delta_p$ is the Dirac-delta function at $p$ and $\alpha =
(\alpha_1,\ldots, \alpha_n)$ is the multi-indices.
\end{thm}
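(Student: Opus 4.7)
The plan is to follow the classical scheme for distributions with point support. Without loss of generality take $p=0$. By the finite order hypothesis, there is a neighborhood $U$ of $0$ and a constant $C>0$ such that
$$
|\langle \psi, \phi \rangle| \leq C \sum_{|\a| \leq N} \sup_{U}|D^\a \phi|
$$
for every test function $\phi \in C_c^\infty(U)$. The strategy is two-fold: first reduce the problem to showing that $\psi$ annihilates every $\phi$ that vanishes to order $N$ at $0$, and then deduce the representation formula by linear algebra on the finite-dimensional jet space.

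The reduction step is purely algebraic. Suppose the vanishing property is known. Then $\psi$ descends to a linear functional on the finite-dimensional quotient of $C_c^\infty(U)$ by the subspace of test functions with $D^\a \phi(0)=0$ for all $|\a| \leq N$. That quotient is canonically identified with the space of $N$-jets at $0$ via the map $\phi \mapsto (D^\a \phi(0))_{|\a|\leq N}$, so there exist constants $c_\a$ with
$$
\langle \psi, \phi \rangle = \sum_{|\a| \leq N} c_\a\, D^\a \phi(0).
$$
Absorbing signs into the $c_\a$'s, this is exactly $\psi = \sum_{|\a|\leq N} c_\a D^\a \delta_0$.

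The analytical core is the vanishing step. Fix $\phi$ with $D^\a\phi(0)=0$ for $|\a|\leq N$, choose a bump $\chi \in C_c^\infty(\R^n)$ with $\chi \equiv 1$ near $0$, and set $\chi_\e(x):=\chi(x/\e)$. Since $(1-\chi_\e)\phi$ vanishes on a neighborhood of $\supp\psi = \{0\}$, we have $\langle\psi,\phi\rangle = \langle\psi,\chi_\e \phi\rangle$ for every $\e>0$. Taylor's theorem combined with the assumed vanishing gives $|D^\b \phi(x)| = O(|x|^{N+1-|\b|})$ for $|\b|\leq N$ and $x$ small; together with the scaling bound $|D^\g \chi_\e| \leq C_\g\, \e^{-|\g|}$ and Leibniz, this yields, on $\supp \chi_\e$ (a ball of radius $O(\e)$),
$$
\sup|D^\a(\chi_\e \phi)| = O(\e^{N+1-|\a|}), \qquad |\a|\leq N.
$$
Plugging into the order-$N$ estimate gives $|\langle \psi, \chi_\e \phi\rangle| = O(\e)$, and letting $\e \to 0$ produces $\langle\psi,\phi\rangle = 0$, completing the reduction.

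The main obstacle is the bookkeeping in the Leibniz/Taylor step: each derivative that falls on $\chi_\e$ costs a factor $\e^{-1}$, and one must verify that this is strictly overcompensated by the vanishing of $\phi$ at $0$ to the appropriate order, uniformly in $\a$ with $|\a|\leq N$. The fact that $N$ is precisely the order of $\psi$ is essential here: with only $N$ derivatives allowed in the continuity estimate, forcing $\phi$ to vanish to order $N$ at $0$ is exactly enough to produce a net positive power of $\e$. For the fully detailed exposition, the paper's references \cite[Section 4.5]{gelfand} and \cite[Theorem 6.25]{rudin} suffice, and no further input is required.
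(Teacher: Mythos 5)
Your argument is correct and is exactly the classical proof that the paper itself defers to (it cites \cite[Section 4.5]{gelfand} and \cite[Theorem 6.25]{rudin} rather than giving a proof): the cutoff--scaling estimate, using the order-$N$ bound and Taylor's theorem to show $\psi$ annihilates every test function whose $N$-jet vanishes at $p$, followed by factoring $\psi$ through the finite-dimensional jet space. The only point worth flagging is that the theorem as printed omits the coefficients, and your conclusion $\psi = \sum_{|\alpha|\le N} c_\alpha D^\alpha \delta_p$ is the correct form of the statement.
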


We start with the case of interior marked point and consider the map
\bea\label{eq:aleph0}
\aleph_0 & : & \CJ_\lambda \times \CM_{\dot \Sigma}
\times \widetilde \CF_{(1,0)}(\dot \Sigma,M) \to \CC\CD
\times M \nonumber \\
&{}& (J,(j,w),z_0)  \mapsto  (\Upsilon^{\text{\rm univ}}(J,(j,w)),w(z_0)).
\eea
Here the subindex $0$ in $\aleph_0$ stands for the `0-jet', and the map $\Upsilon^{\text{\rm univ}}$
is the map given in \eqref{eq:Upsilon-univ}. (The higher-jet transversality can be also proved
by adapting the proof of \cite{oh:highjet} and the present proof. We postpone its proof and
an application elsewhere.)
Since the boundary condition is irrelevant for the discussion of the present section,
we omit $\vec R$ from the notation.

We denote by $\pi_i$ the projection from
$\CJ_\lambda \times \widetilde \CF_{(1,0)}(\dot \Sigma,M)$ to the $i$-th factor with $i=1, \, 2$.
Then we introduce
 \beastar
\widetilde \CM_{(1,0)}(\dot \Sigma,M;\{p\};\CJ_\lambda) & = & \aleph_0^{-1}(o_{\CC\CD} \times \{p\} )\\
\widetilde \CM_{(1,0)}(\dot \Sigma, M;\{p\};J)
& = & \widetilde \CM_{(1,0)}(\dot \Sigma, M;\{p\};\CJ_\lambda) \bigcap \pi_1^{-1}(J).
\eeastar

The following is a fundamental proposition for the proof of Theorem \ref{thm:0jet-intro}
as in the standard strategy exercised in the similar transversality result for the study of
pseudoholomorphic curves in  \cite[Section 10.5]{oh:book1} which in turn follows the scheme used in
\cite{oh-zhu:ajm}  for the 1-jet transversality proof for the case of pseudoholomorphic curves.
We apply the same scheme with the replacement of pseudoholomorphic curves by contact instantons
first for the 0-jet case in this part. Because the nature of equation is different, especially \emph{because the contact instanton
equation involves the second derivatives}, the proof involves additional complication beyond
that of \cite{oh-zhu:ajm}.

\begin{prop}\label{prop:0-jet}
The map $\aleph_0$ is transverse to the submanifold
$$
o_{\CC\CD} \times \{p\} \subset \CC\CD \times M.
$$
\end{prop}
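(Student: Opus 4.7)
The plan is to establish surjectivity of the differential $d\aleph_0$ at each preimage point (modulo the tangent space to the submanifold $o_{\CC\CD}\times\{p\}$) by a duality argument combined with the distribution structure theorem, Theorem \ref{thm:gelfand}. At a point $(J_0,(j_0,w_0),z_0)$ with $\Upsilon(J_0,(j_0,w_0)) = 0$ and $w_0(z_0) = p$, the linearization reads
\begin{equation*}
d\aleph_0(\delta J, \delta(j,w), \delta z_0) = \bigl( D_J \Upsilon \cdot \delta J + D_{(j,w)} \Upsilon \cdot \delta(j,w),\; dw_0(z_0)\, \delta z_0 + \delta w(z_0)\bigr),
\end{equation*}
so transversality is equivalent to surjectivity of this map onto $\CC\CD_{(J_0,(j_0,w_0))} \oplus T_pM$. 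I proceed by contradiction: assume there exists a nonzero continuous functional $(\eta^\ast, v^\ast)$ on the target that annihilates the image, where $\eta^\ast = (\eta_1^\ast,\eta_2^\ast)$ is a distribution dual to $\CH^{\pi(0,1)}(w_0^\ast\xi) \oplus \Omega^2(\Sigma)$.

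Testing the annihilation identity on the three types of variations separately, I extract: (i) $\eta_1^\ast \circ D_J\Upsilon = 0$ for every $\delta J$; (ii) $v^\ast \circ dw_0(z_0) = 0$; and (iii) $\langle \eta^\ast, D_{(j,w)}\Upsilon \cdot \delta(j,w)\rangle + v^\ast(\delta w(z_0)) = 0$ for every $\delta(j,w)$. Identity (iii), restricted to $\delta(j,w) = (0,\delta w)$ with $\supp \delta w \subset \dot\Sigma\setminus\{z_0\}$, shows that the distribution $D_{(j,w)}\Upsilon^\ast \eta^\ast$ has support contained in $\{z_0\}$; the full identity (iii) then pins it down to $-v^\ast \otimes \delta_{z_0}$. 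Next, condition (i) is used together with somewhere injectivity of $w_0$ to eliminate $\eta_1^\ast$ away from $z_0$: choosing $\delta J$ localized near $w_0(z_\ast)$ for a somewhere injective point $z_\ast \neq z_0$, the scheme of \cite{le-ono:perturbation} and \cite{oh-zhu:ajm} forces $\eta_1^\ast$ to vanish on an open neighborhood of $z_\ast$ in $\dot\Sigma$.

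Elliptic unique continuation applied to the formal adjoint of $D_{(j,w)}\Upsilon$ then propagates the vanishing to all of $\dot\Sigma \setminus \{z_0\}$, so that $\eta^\ast$ itself is supported in $\{z_0\}$ and, by Theorem \ref{thm:gelfand}, is a finite sum of derivatives of $\delta_{z_0}$. Testing (iii) against $\delta w$ with arbitrarily prescribed $k$-jet at $z_0$ but $\delta w(z_0) = 0$ forces every higher-derivative coefficient in this representation to vanish; the remaining order-zero term, pitted against (ii) and a last test with $\delta w(z_0) \ne 0$, gives both $v^\ast = 0$ and $\eta^\ast = 0$, the desired contradiction. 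The main obstacle is the \emph{mixed-order} nature of $\Upsilon = (\Upsilon_1,\Upsilon_2)$: because $\Upsilon_2 = d(w^\ast\lambda \circ j)$ is second order in $w$ whereas $\Upsilon_1 = \delbar^\pi w$ is first order, the formal adjoint system is of mixed order, so both the unique continuation step and the matching of distributional coefficients must be executed componentwise, a complication absent in the pseudoholomorphic-curve setting of \cite{oh-zhu:ajm} and \cite[Section 10.5]{oh:book1} and explicitly anticipated by the author in the remark preceding the proposition.
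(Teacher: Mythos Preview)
Your proposal follows the same overall strategy as the paper: Hahn--Banach duality, localization of the annihilator via the $J$-variation at a somewhere injective point, unique continuation, and the point-support structure theorem (Theorem~\ref{thm:gelfand}). The paper's endgame, however, differs from your jet-prescription argument. Rather than testing against $\delta w$ with prescribed higher jets, the paper invokes the Sobolev duality $\eta \in (W^{1,p})^\ast \cong W^{-1,q}$ to conclude \emph{directly} that the point-supported distribution $\eta$ has order zero, i.e.\ $\eta = \beta_{z_0}\,\delta_{z_0}$ for a constant $\beta_{z_0}$ (this is Lemma~\ref{lem:dJbar=dbar} and equation~\eqref{eq:eta=adelta}). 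It then exploits a local Darboux coordinate system in which the lower-order coefficient matrices $E,F$ of $D\delbar^\pi_J$ vanish at $z_0$, reducing the pairing with $\eta$ to that against the flat $\delbar$; a cut-off decomposition $Y = \widetilde Y + \chi\, Y(z_0)$ with $\widetilde Y(z_0)=0$, together with the explicit separation of \eqref{eq:coker-simple} into the Reeb-direction and $\xi$-direction equations \eqref{eq:Reeb-term}--\eqref{eq:Xi-term}, then shows $(\eta,f)$ is a weak solution of the adjoint on all of $\Sigma$, hence continuous, hence identically zero (Lemma~\ref{lem:eta=0}).

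Your jet-testing route is a legitimate alternative, but to make it rigorous you must verify that the map from jets of $\delta w$ at $z_0$ (subject to $\delta w(z_0)=0$) to jets of $D\Upsilon\cdot\delta w$ at $z_0$ is surjective in the relevant range---this is where ellipticity of the mixed-order symbol enters, and it is precisely what the paper circumvents by the Sobolev-order bound. Note also that the paper does not use the marked-point variation $\delta z_0$ at all (it sets $b=0=v$), so your condition (ii) is superfluous.
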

\begin{proof} Its linearization $D\aleph_0(J,(j,w),z)$ is given by the map
\be\label{eq:DUpsilon}
(L,(b,Y),v) \mapsto
\left(D_{J,(j,w)}\Upsilon^{\text{\rm univ}}(L,(b,Y)), Y(w(z)) + dw(z)(v)\right) \ee
for
$$
L  \in T_J\CJ_\lambda, \, b \in T_j\CM_{\dot \Sigma}, \, v \in
T_z \dot \Sigma , \, Y \in T_w\FF(\Sigma, M).
$$
This defines a linear map
$$
T_J\CJ_\lambda \times T_j\CM_{\dot \Sigma} \times T_w\FF(\Sigma, M)
\times T_z \dot \Sigma  \to \CC\CD_{(J,(j,w))} \times T_{w(z)}M
$$
on $W^{1,p}$. But for the map $\aleph_0$ to be differentiable, we need to choose the
completion $W^{k,p}(\dot \Sigma,M)$ of $\FF(\dot \Sigma,M)$ with $k\geq 2$.

We take the Sobolev completion in the $W^{k,p}$-norm for at least $k \geq 2$. We take $k = 2$.
We would like to prove that this linear map is a submersion
at every element $(J, j, w, z_0) \in \widetilde\CM_1(\dot \Sigma,M)$ i.e.,
at the pair $(w,z_0)$ satisfying
$$
\Upsilon^{\text{\rm univ}}(J,(j,w)) = 0, \quad w(z_0) = p.
$$
For this purpose, we need to study solvability of the system of
equations
 \be D_{J,(j,w)}\Upsilon^{\text{\rm univ}}(L,(b,Y)) = (\gamma, \omega), \quad
Y(w(z_0)) + dw(v) = X_0
\ee
for any given $(\gamma, \omega) \in \CC\CD_w$ and $X_0$, i.e.,
$$
\gamma \in \Omega_{(j,J)}^{\pi(0,1)}(w^*TM), \,
\omega \in \Omega^2(\dot \Sigma), \quad X_0 \in T_{w(z_0)}M.
$$
For the current study of evaluation transversality,
the domain complex structure $j$ does not play much
role in our study. Especially it does not play any role throughout
our calculations except that it appears as a parameter. Therefore we will fix $j$
throughout the proof. Then it
will be enough to consider the case $b = 0 $. Then the above equation
is reduced to
\be\label{eq:b=0}
D_{J,w}\Upsilon^{\text{\rm univ}} (L,Y) = (\gamma,\omega), \quad Y(w(z_0)) + dw(v) = X_0.
\ee

Firstly,  we study \eqref{eq:b=0} for $Y \in W^{2,p}$. We regard
$$
\CC\CD_{(J,(j,w))} \times T_{w(z_0)}M
$$
as a Banach space with the norm $\|\cdot \|_{1,p} + \|\cdot \|_p +  |\cdot|$,
where $|\cdot|$ is any norm induced by an inner product on $T_xM$.

We will show that the image of the map
\eqref{eq:DUpsilon} restricted to the elements of the form
$$
(L,(0,Y),v)
$$
is onto as a map
$$
T_J \CJ_\lambda \times \Omega^0_{2,p}(w^*TM)
\to \CC\CD_{J,w}^{1,p} \times T_{w(z_0)}M
$$
where $(w,j,z_0,J)$ lies in $(\Upsilon_1^{\text{\rm univ}})^{-1}(o_{\CH^{''}}\times \xi)$, and we set
\be\label{eq:CD1p}
\CC\CD_{J,w}^{1,p} := \Omega^{(0,1)}_{1,p}(w^*\xi) \times \Omega^2_p (\dot \Sigma).
\ee
For the clarification of notations, we denote the natural pairing
$$
\CB \times \CB^* \to \R
$$
by $\langle \cdot, \cdot \rangle$ for any Banach space $\CB$ and the inner product on $T_xM$ by $(\cdot,
\cdot)_{x}$.

We now prove the following which will then finish the proof by the ellipticity
of the linearization map. The remaining part of proof will be occupied by the proof
of this statement.

\begin{prop}\label{prop:0jet-dense}
The subspace
$$
\Image \aleph_0 \subset \CC\CD_0 \oplus T_{w(z_0)}R
$$
is dense.
\end{prop}
\begin{proof} For the proof, we will use the Hahn-Banach lemma in an essential way.
Let $((\eta, f), X_p) \in \CC\CD^* \times T_pM$ satisfy
\be\label{eq:0}
\left\langle D_w\Upsilon(Y) + \left(\frac{1}{2}L \cdot d^\pi w \circ j,0\right),(\eta,f) \right\rangle
+ \langle Y, \delta_{z_0} X_p \rangle = 0
\ee
for all $Y \in \Omega^0_{2,p}(w^*TM)$ and $L$ where $\delta_{z_0}$ is the
Dirac-delta function supported at $z_0$.
By the Hahn-Banach lemma, it will be enough to prove
\be\label{eq:vanishing}
(\eta,f) = 0, \quad  X_p = 0.
\ee
In the derivation of \eqref{eq:0},  we have used the formula
$$
\Upsilon(J,(j,w)) = (\delbar^\pi_J w, d(w^*\circ \lambda \circ j))
$$
to compute the linearization of $\aleph_0$. In particular, we have
$$
D_1\aleph_0(L) = \left(\frac{1}{2}L \cdot d^\pi w \circ j,0\right)
$$
in the direction of $J$ where the second factor of the value of $\Upsilon$
does not depend on $J$.  Obviously, we have
$D_2\aleph_0(Y) = D\Upsilon_{J,j}(Y)$.

Under this assumption, we would like to show \eqref{eq:vanishing}.
Without loss of any generality, we
may assume that $Y$ is smooth since $C^\infty(w^*TM)
\hookrightarrow \Omega^0_{2,p}(w^*TM)$ is dense.
Taking $L=0$ in \eqref{eq:0}, we obtain
\be \label{eq:coker} %
\langle D_w\Upsilon(Y), (\eta,f) \rangle + \langle  Y,
\delta_{z_0} X_p \rangle = 0 \quad \mbox{ for all $Y$
of $C^{\infty}$ }.
\ee %
Therefore by definition of the distribution derivatives, $\eta$ satisfies
$$
(D_w\Upsilon)^\dagger (\eta,f) - \delta_{z_0} X_p = 0
$$
as a distribution, i.e.,
$$
(D_w\Upsilon)^\dagger (\eta,f) = \delta_{z_0} X_p
$$
where
$$
(D_w\Upsilon)^\dagger = (D_w \Upsilon(J,(j,w)))^\dagger
$$
is the formal adjoint of $D_w
\Upsilon(J,(j,w))$ whose symbol is of the same type as $D_w \Upsilon_{(j,J)}$ and so is
an elliptic first order differential operator. (See \eqref{eq:matrixDUpsilon} for the linearization formula and
recall that $(\delbar_J^\pi)^\dagger = - \del_J^\pi$ modulo zero order operators.)
By the elliptic regularity, $(\eta,f)$ is a classical solution on $\Sigma \setminus \{z_0\}$.

On the other hand, by setting $Y = 0$ in \eqref{eq:0},  we get
\be
\label{eq:1/2L} \langle L\cdot dw\circ j, \eta \rangle=0
\ee
for all $L\in T_{J} \CJ_\lambda$. From this identity, the
argument used in the transversality proven in the previous section
shows that $\eta=0$ in a small neighborhood of any somewhere injective point in $\Sigma
\setminus \{z_0\}$. Such a somewhere injective point exists by the
hypothesis of $w$ being somewhere injective and the fact that the
set of somewhere injective points is open and dense in the domain
under the given hypothesis. Then by the unique
continuation theorem, we conclude that $\eta = 0$ on $ \Sigma
\backslash\{z_0\}$ and so the support of $\eta$ as a distribution
on $\Sigma$ is contained at the one-point subset $\{z_0\}$ of $\Sigma$.

The following lemma will conclude the proof of Proposition \ref{prop:0jet-dense}.
We postpone the proof of the lemma till the next section.

\begin{lem} \label{lem:eta=0}  $(\eta,f)$ is a distributional solution of
$(D_w \Upsilon)^\dagger (\eta,f) = 0$ on $\Sigma$
and so continuous.
In particular, we have $(\eta,f) = 0$ in $(\CC\CD)^*$.
\end{lem}

Once we know $(\eta,f) = 0$, the equation \eqref{eq:0} is reduced to
the finite dimensional equation
\be\label{eq:simple0}
(Y(z_0),X_p)_{z_0} = 0
\ee
It remains to show that $X_p = 0$. For this, we have only to
show that the image of the evaluation map
$$
Y \mapsto  Y(z_0)
$$
is surjective onto $T_{p}M$, which is now obvious.

Now it remains to prove Lemma \ref{lem:eta=0}.

\section{Proof of Lemma \ref{lem:eta=0}.}
\label{sec:proof}

Our primary goal is to prove
\be\label{eq:tildexi} \langle D_w\Upsilon(Y), (\eta,f) \rangle = 0
\ee
for all smooth $Y \in \Omega^0(w^*TM)$, i.e., $(\eta,f)$ is a distributional solution of
$$
(D_w\Upsilon(J,(j,w)))^\dagger (\eta,f) = 0
$$
\emph{on the whole $\Sigma$}, not just on $\Sigma \setminus \{z_0\}$. This will imply that
$(\eta,f)$ is a solution smooth everywhere by the elliptic regularity.

We start with \eqref{eq:coker}
\be\label{eq:coker-append} \langle
D_w\Upsilon(Y), (\eta,f) \rangle + \langle Y,
\delta_{z_0}X_p \rangle = 0 \quad \mbox{for all $Y \in
C^{\infty}$}.
\ee
We first simplify the expression of the
pairing $\langle D_w\Upsilon(Y), (\eta,f) \rangle$ knowing that
$\supp (\eta,f) \subset \{z_0\}$.

Let $z$ be a complex coordinate centered at a fixed marked point $z_0$
and
$$
(x_1,y_1,x_2,y_2,\cdots,x_n,y_n, \eta)
$$
be a Darboux coordinates so that $\lambda = d\eta - \sum_{i=1}y_i dx_i$
on a neighborhood of $p \in M$.
\begin{rem} In the proof of \cite[Section 10.5]{oh:book1}, we chose a complex coordinates
$(w_1, \cdots, w_n)$ identifying a neighborhood of $p$ with an open subset of $\C^n$.
\end{rem}

We consider the standard metric
$$
h = \frac{\sqrt{-1}}{2} dz d\bar z
$$
on a neighborhood $U \subset \dot \Sigma$ of $z_0$.

The following lemma will be crucial in our proof.
\begin{lem}\label{lem:dJbar=dbar} Let $\eta$ be as above.
For any smooth section $Y$ of $w^*(TM)$ and $\eta$ of
$\left(\Omega^{(0,1)}_{1,p}(w^*\xi)\right)^*$
$$
\langle D\delbar_J^\pi(Y), \eta \rangle = \langle \delbar Y^\pi, \eta \rangle
$$
where $\delbar$ is the standard Cauchy-Riemann operators on $\R^{2n}\cong \C^n$ in the above coordinate.
\end{lem}
\begin{proof} We have already shown that
$\eta$ is a distribution with $\supp (\eta,f) \subset \{z_0\}$. By the
structure theorem on the distribution supported at a point $z_0$ Theorem \ref{thm:gelfand}, we have
$$
\eta = P\left(\frac{\del}{\del s}, \frac{\del}{\del t}\right)(\delta_{z_0})
$$
where $z = s + it$ is the given complex coordinates at $z_0$ and
$P\left(\frac{\del}{\del s}, \frac{\del}{\del t}\right)$ is a differential
operator associated by the polynomial $P$ of two variables with coefficients
in
$$
\Lambda^{(0,1)}_{(j_{z_0},J_p)}(w^*\xi).
$$

Furthermore since $\eta \in (W^{1,p})^*\cong W^{-1,q}$,
the degree of $P$ \emph{must be zero}
and so we obtain
\be\label{eq:eta=adelta}
\eta = \beta_{z_0}\cdot \delta_{z_0}
\ee
for some constant vector $\beta_{z_0} \in \Lambda^{(0,1)}(\xi_p) $.

We then have the expression
$$
D\delbar_J^\pi Y = \delbar Y + E \cdot \del Y + F\cdot Y
$$
near $z_0$ in coordinates  where $E$ and $F$ are zero-order matrix operators
satisfying
$$
E(z_0) = 0 = F(z_0).
$$
(See \cite[p.331]{oh-zhu:ajm} and \cite{sikorav:holo} for such a derivation.)
Therefore by \eqref{eq:eta=adelta}, we derive
\beastar
&{}& \langle E \cdot \del Y^\pi + F \cdot Y^\pi, \eta \rangle  =  \langle E \cdot \del Y^\pi
+ F\cdot Y^\pi, \beta_{z_0} \delta_{z_0} \rangle\\
&{}& \quad  = (E(z_0) \del Y^\pi(z_0) + F(z_0) Y^\pi(z_0), \beta_{z_0})_{z_0} = 0
\eeastar
and we obtain
$$
\langle D_w \delbar_J(Y), \eta \rangle = \langle \delbar Y^\pi + E \cdot \del^\pi Y^\pi + F\cdot Y^\pi,
\eta \rangle = \langle \delbar^\pi Y^\pi, \eta\rangle
$$
since $\supp \eta \subset \{z_0\}$.
This finishes the proof. \end{proof}

By this lemma, \eqref{eq:coker-append} becomes
\be\label{eq:coker-simple}
\langle \delbar Y^\pi, \eta \rangle + \langle -\Delta (\lambda(Y)) dA
+ d((Y^\pi \rfloor d\lambda) \circ j, f \rangle
+ \langle Y, \delta_{z_0} X_p \rangle = 0
\ee
for all $Y$. We next rewrite the middle summand by integration by parts
as in Part I but with the letter $g$ replaced by $f$ here.

\begin{lem}\label{lem:mid-summand} We have
$$
 \langle -\Delta (\lambda(Y)) dA
+ d((Y^\pi \rfloor d\lambda) \circ j, f \rangle
 = - \int \lambda(Y)\, \Delta f \, dA + \int df \circ j \wedge (Y^\pi \rfloor d\lambda)
 $$
 \end{lem}
 Recalling $Y = Y^\pi + \lambda(Y) \, R_\lambda$ and
noting that $\lambda(Y)$ and $Y^\pi$ are independent and arbitrary,
we rearrange the summand of
\eqref{eq:coker-simple} into
\beastar
 0 &= & - \int \lambda(Y)\, \Delta f \, dA  + \langle  \lambda(Y)\,  R_\lambda, \delta_{z_0} X_p \rangle \\
  &{}& + \int \langle  \delbar^\pi Y^\pi, \eta \rangle
  + \int df \circ j \wedge (Y^\pi \rfloor d\lambda) \\
  &{}& + \langle Y^\pi, \delta_{z_0} X_p \rangle
\eeastar
Recall the decomposition $Y = Y^\pi + \lambda(Y)\, R_\lambda$. By considering $Y$
with $Y^\pi = 0$ and with $\lambda(Y) = 0$ separately which are arbitrary,
we have derived
\bea
0 &= &- \int \lambda(Y)\, \Delta f \, dA +\langle  \lambda(Y)\, R_\lambda, \delta_{z_0} X_p \rangle
\label{eq:Reeb-term}\\
0 & = & \int \langle  \delbar Y^\pi, \eta \rangle
  + \int df \circ j \wedge (Y^\pi \rfloor d\lambda)
  + \langle Y^\pi , \delta_{z_0} X_p \rangle  \label{eq:Xi-term}.
 \eea
 We decompose $Y$ as
$$
Y(z) = (Y(z) - \chi(z) Y(z_0)) + \chi(z) Y(z_0)
$$
on $U$ where $\chi$ is a cut-off function with $\chi \equiv 1$ in a
small neighborhood $V \subset U$ of $z_0$ and satisfies $\supp \chi \subset
U$. It induces the corresponding decomposition of $Y^\pi$ and $\lambda(Y)$.

We first examine the equation \eqref{eq:Xi-term}.
Then the first summand $\widetilde Y^{\pi}$ defined by
$$
\widetilde Y^{\pi}(z):= Y^{\pi}(z) - \chi(z) Y^{\pi}(z_0)
$$
is a smooth section on $\Sigma$, and satisfies
$$
\widetilde Y^{\pi}(z_0) = 0, \quad \delbar \widetilde Y^{\pi} = \delbar{Y^{\pi}} \quad \mbox{on $V$}
$$
since $\chi(z) Y^{\pi}(z_0) \equiv Y^{\pi}(z_0)$ on $V$.
Therefore applying \eqref{eq:Xi-term} to $\widetilde Y^{\pi}$ instead of $Y^{\pi}$
and recalling $\supp (\eta,f) \subset \{p\}$, we obtain
$$
\langle \delbar \widetilde Y^{\pi}, \eta \rangle + \langle \widetilde Y^{\pi}, \delta_{z_0} X_p
\rangle = 0.
$$
Again using the support property $\supp \eta \subset \{z_0\}$
and \eqref{eq:coker-append},  we derive
\be\label{eq:delxi-adz}
\langle \widetilde Y^{\pi}, \delta_{z_0}X_p \rangle  =
\langle \widetilde Y^{\pi}(z_0), X_p\rangle= 0
\ee
and so $\langle \delbar \widetilde Y^{\pi}, \eta \rangle = 0$.
But we also have
\be\label{eq:tildexi=xi}
\langle \delbar Y^\pi, \eta \rangle=\langle \delbar \widetilde Y^\pi, \eta \rangle
\ee
since $\delbar^\pi \widetilde Y^\pi = \delbar{Y^\pi}$ on $V$ and $\supp \eta \subset \{z_0\}$.
Hence we obtain
$$
\langle \delbar^\pi_J Y^\pi, \eta \rangle = 0
$$
for all $Y^\pi$.

Applying similar reasoning to \eqref{eq:Reeb-term}, we have
derived
$$
\int \lambda(Y) \Delta f\, dA = 0
$$
for all $\lambda(Y)$. Combining the two, we have proved $(\eta,f)$ is a weak solution of
$$
(\delbar^\pi_J)^\dagger \eta = 0, \quad  \Delta f = 0
$$
on whole $\Sigma$. Therefore a we have finished the proof of \eqref{eq:tildexi} by Lemma \ref{lem:dJbar=dbar}.
By the elliptic regularity, $(\eta,f)$ is a smooth solution. In particular
it is continuous.  Since we have
already shown $(\eta,f) = 0$ on $\Sigma \setminus \{z_0\}$, continuity
of $\eta$ proves $(\eta,f) = 0$ on the whole $\Sigma$.
This finishes the proof.
 \end{proof}

This in turn finishes the proof of Proposition \ref{prop:0jet-dense}.
 \end{proof}

\section{The case of the boundary evaluation map}

In this section, we explain how we can augment the arguments used in the proof of
generic evaluation transversality to handle the case of boundary evaluation maps.
We will also write $R$ for $\vec R$ in the present section.

We now consider the map
\bea\label{eq:aleph1}
\aleph_0^\del & : & \CJ_\lambda \times \CM_{\dot \Sigma}
\times \widetilde \CF_{(0,1)}(\dot \Sigma, R) \to \CC\CD
\times R \nonumber \\
&{}& (J,(j,w),z_0)  \mapsto  (\Upsilon(J,(j,w)),w(z_0)).
\eea
Then for a given point $p \in R$,
 \beastar
\widetilde \CM_{(0,1)}(M, \lambda, R;\{p\};\CJ_\lambda) & = & \aleph_0^{-1}(o_{\CC\CD} \times \{p\} )\\
\widetilde \CM_{(1,0)}(M, \lambda, R;\{p\};J)
& = & \widetilde \CM_{(1,0)}( M, \lambda, R;\{p\};\CJ_\lambda) \cap \pi_1^{-1}(J).
\eeastar
We now establish the following boundary analog to Proposition \ref{prop:0-jet}.

\begin{prop}\label{prop:bdy-0jet}
The map $\aleph_0$ is transverse to the submanifold
$$
o_{\CC\CD} \times \{p\} \subset \CC\CD \times R.
$$
\end{prop}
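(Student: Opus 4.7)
The plan is to follow the scheme of Proposition \ref{prop:0-jet}, substituting boundary features at each step. By Hahn-Banach it suffices to show that any pair $((\eta,f), X_p) \in (\CC\CD)^* \times T_pR$ with
\begin{equation*}
\bigl\langle D_w\Upsilon(J,(j,w))Y + (\tfrac{1}{2}L\cdot d^\pi w\circ j, 0), (\eta,f)\bigr\rangle + \langle Y, \delta_{z_0} X_p\rangle = 0
\end{equation*}
for every $L\in T_J\CJ_\lambda$ and every admissible $Y$ (a smooth section of $w^*TM$ with $Y|_{\partial\dot\Sigma}\in (\partial w)^*T\vec R$) must vanish. Setting $Y=0$ and varying $L$, the somewhere-injective hypothesis combined with the unique continuation theorem forces $\supp(\eta,f)\subset\{z_0\}$, exactly as in the interior case; this step is insensitive to whether $z_0$ lies on the boundary since the argument runs on the open complement of $z_0$.

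Next, I work in a half-disc chart around $z_0\in\partial\dot\Sigma$ identified with $0\in\mathbb{H}=\{s+it:t\ge 0\}$, and choose Darboux coordinates $(x_i,y_i,\eta)$ near $p=w(z_0)$ adapted to the Legendrian so that $\lambda = d\eta - \sum y_i\,dx_i$ and $R$ is cut out locally by $\{y_i=0,\,\eta=0\}$; in such coordinates $T_{p'}R = \operatorname{span}\{\partial/\partial x_i\}$ for all nearby $p'\in R$. A boundary-point version of Theorem \ref{thm:gelfand}, obtained by Schwartz reflection across $\partial\mathbb{H}$ or by repeating Rudin's argument directly on $\mathbb{H}$, together with the Sobolev constraint $\eta\in W^{-1,q}$ forces
\begin{equation*}
\eta = \beta_{z_0}\cdot\delta_{z_0}, \qquad \beta_{z_0}\in\Lambda^{(0,1)}(\xi_p),
\end{equation*}
and similarly $f = c\cdot\delta_{z_0}$ for a scalar $c$. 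Because the zero-order correction operators $E,F$ appearing in the coordinate expression of $D\delbar^\pi_J$ satisfy $E(z_0)=F(z_0)=0$, the analogue of Lemma \ref{lem:dJbar=dbar} reduces the $\delbar^\pi$-part of the pairing to $\langle \delbar Y^\pi,\eta\rangle$.

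To exploit this, I decompose $Y=(Y-\chi Y(z_0))+\chi Y(z_0)$ using a cut-off $\chi$ supported in the chart with $\chi\equiv 1$ on a smaller neighborhood of $z_0$. The crucial observation is that in the Legendrian-adapted coordinates, the constant section $\chi(z) Y(z_0)$ automatically lies in $T_{w(z)}R$ along $\partial\dot\Sigma\cap U$ whenever $Y(z_0)\in T_pR$, so $\widetilde Y := Y-\chi Y(z_0)$ remains admissible and vanishes at $z_0$. The usual cut-off/support manipulation then yields the boundary analogs of \eqref{eq:Reeb-term} and \eqref{eq:Xi-term}, and shows that $(\eta,f)$ is a distributional solution of $(D_w\Upsilon(J,(j,w)))^\dagger(\eta,f)=0$ on all of $\dot\Sigma$ with the formal-adjoint boundary conditions along $\partial\dot\Sigma$. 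Elliptic regularity up to the boundary upgrades $(\eta,f)$ to a continuous section, which together with its vanishing off $\{z_0\}$ forces $(\eta,f)\equiv 0$. The cokernel relation then collapses to $(Y(z_0),X_p)_{w(z_0)}=0$ for every admissible $Y$, and surjectivity of the boundary evaluation $Y\mapsto Y(z_0)$ onto $T_pR$, obtained by extending any fixed vector in $T_pR$ to a compactly supported admissible variation, forces $X_p=0$.

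The principal obstacle will be the two points where the boundary genuinely intervenes: first, establishing the boundary-point version of the Gelfand structure theorem on $\mathbb{H}$ with the correct Sobolev order bound; and second, ensuring that the cut-off decomposition preserves the totally real boundary condition. The latter is precisely what dictates the use of Darboux coordinates adapted to $R$, so that $\chi(z) Y(z_0)$ respects $T\vec R$ along the boundary without further correction. Once these two points are handled, the remaining analytic content is parallel to the interior case.
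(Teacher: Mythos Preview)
Your proposal is correct and follows the same Hahn--Banach/point-support/elliptic-regularity scheme as the paper. The one place where the two diverge in emphasis is the handling of the boundary contribution. The paper does not invoke a half-space Gelfand theorem or adapted Darboux coordinates; instead it integrates by parts explicitly (Lemma~\ref{lem:-bdy-mid-summand}), picking up the boundary terms $\int_{\partial\dot\Sigma} f\,\tfrac{\partial}{\partial\nu}(\lambda(Y))\,d\theta$ and $-\int_{\partial\dot\Sigma} f\,(Y^\pi\rfloor d\lambda)$, and then reads off from these the adjoint boundary conditions $f|_{\partial\dot\Sigma}=0$ and $\eta|_{\partial\dot\Sigma}\perp T\vec R$. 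It then solves the resulting Dirichlet problem $\Delta f=0$, $f|_{\partial}=0$ first to kill $f$, and afterwards the boundary value problem $(\delbar^\pi)^\dagger\eta=0$, $\eta|_{\partial}\perp T\vec R$ to kill $\eta$. Your route via Legendrian-adapted Darboux coordinates is a cleaner way to ensure the cut-off section $\chi(z)Y(z_0)$ stays admissible along $\partial\dot\Sigma$---a point the paper glosses over---while the paper's explicit boundary integrals have the advantage of making the adjoint boundary value problem transparent without any appeal to a boundary structure theorem. One small correction: your claim $f=c\,\delta_{z_0}$ is not quite right, since $f$ lives in the dual of $\Omega^2_p$, i.e.\ in $L^q$, not in $W^{-1,q}$; but this only helps, as an $L^q$ function with point support vanishes outright.
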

\begin{proof} Its linearization $D\aleph_0(J,(j,w),z)$ is given by the map
\be\label{eq:DUpsilon-eval}
(L,(b,Y),v) \mapsto
\left(D_{J,(j,w)}\Upsilon(L,(b,Y)), Y(w(z)) + dw(z)(v)\right) \ee
for
$$
L  \in T_J\CJ_\lambda, \, b \in T_j\CM_{\dot \Sigma}, \, v \in
T_z \dot \Sigma , \, Y \in T_w\FF(M,\lambda,R)).
$$
But this time, $(L, (b,Y), v)$ satisfies the boundary condition
\be\label{eq:bdy-cond}
Y(\del \dot \Sigma) \subset T R, \quad v \in T \del \dot \Sigma.
\ee
This defines a linear map
$$
T_J\CJ_\lambda \times T_j\CM_{\dot \Sigma} \times T_w
\FF(M, \lambda,R)
\times T_z \dot \Sigma  \to \CC\CD_{(J,(j,w))} \times T_{w(z)} R.
$$

We take the Sobolev completion in the $W^{k,p}$-norm for with $k = 2$.
We would like to prove that this linear map is a submersion.
For this purpose, we again need to study solvability of the system of
equations
 \be D_{J,(j,w)}\Upsilon(L,(b,Y),v) = (\gamma, \omega), \quad
Y(w(z_0)) + dw(v) = X_0
\ee
for any given $(\gamma, \omega) \in \CC\CD_w$ and $X_0$, i.e.,
$$
\gamma \in \Omega_{(j,J)}^{\pi(0,1)}(w^*TM), \,
\omega \in \Omega^2(\dot \Sigma), \quad X_0 \in T_{w(z_0)} R.
$$
Again we put $b = 0 = v$ obtain the equation
\be\label{eq:bdy-b=0}
D_{J,w}\Upsilon (L,Y) = (\gamma,\omega), \quad Y(w(z_0)) = X_0
\ee
for $Y \in T_w \FF(M, \lambda,R)$.

We will show that the image of the map
\eqref{eq:DUpsilon-eval} restricted to the elements of the form
$$
(L,(0,Y),v)
$$
is onto as a map
$$
T_J \CJ_\lambda \times \Omega^0_{2,p}(w^*TM) \times T_z(\del \dot \Sigma)
\to \CC\CD_{J,w}^{1,p} \times T_{w(z_0)} R
$$
where $(w,j,z_0,J)$ lies in $\Upsilon_1^{-1}(o_{\CH^{''}}\times \xi)$.

We now prove the following boundary analog to Proposition \ref{prop:0jet-dense}
which will then finish the proof.

\begin{prop}\label{prop:bdy-0jet-dense}
The subspace
$$
\Image \aleph_0^\del \subset \CC\CD \oplus T_{w(z_0)}R
$$
is dense.
\end{prop}
\begin{proof}
Let $((\eta, f), X_p) \in (\CC\CD)^* \times T_p \vec R$ satisfy
\be\label{eq:0-bdy}
\left\langle D_w\Upsilon(Y) + \left(\frac{1}{2}L \cdot d^\pi w \circ j,0\right),(\eta,f) \right\rangle
+ \langle Y, \delta_{z_0} X_p \rangle = 0
\ee
for all $Y \in \Omega^0_{2,p}(w^*TM, (\del w)^*TR)$ and $L$ where $\delta_{z_0}$ is the
Dirac-delta function supported at $z_0$.
We again would like to show
\be\label{eq:bdy-vanishing}
(\eta,f) = 0, \quad X_p = 0.
\ee
Taking $L=0$ in \eqref{eq:0}, we obtain
\be \label{eq:bdy-coker}
\langle D_w\Upsilon(Y), (\eta,f) \rangle + \langle  Y,
\delta_{z_0} X_p \rangle = 0
\ee
for all $Y$ of $C^{\infty}$ satisfying the boundary condition
\be\label{eq:bdy-Y}
Y(\del \dot \Sigma) \subset T R.
\ee

Therefore by definition of the distribution derivatives, $\eta$ satisfies
$$
(D_w\Upsilon(J,(j,w)))^\dagger (\eta,f) - \delta_{z_0} X_p = 0
$$
as a distribution, i.e.,
$$
(D_w\Upsilon(J,(j,w)))^\dagger (\eta,f) = \delta_{z_0} X_p
$$
where \emph{$(D_w \Upsilon(J,(j,w)))^\dagger$ is the formal $L^2$-adjoint of $D_w
\Upsilon(J,(j,w))$.}
The following lemma provides a description of  the formal adjoint.

\begin{lem} The $L^2$-adjoint $(D_w \Upsilon(J,(j,w)))^\dagger$ is an linear elliptic
operator whose domain is given by the pairs $(\eta,f)$ such that
$$
\eta \in \Omega^{(1,0)}_{-1,q}(M,\lambda;J), \quad f \in L^q
$$
satisfying the elliptic boundary condition.
\end{lem}

By the similar reasoning by considering the  variations $L$ with $Y = 0$,
we again arrive at the following which will finish the proof by the same reason as
for the interior case.

\begin{lem} \label{lem:bdy-eta=0}  $\eta$ is a distributional solution of
$(D_w \Upsilon(J,(j,w)))^\dagger (\eta,f) = 0$ on $\Sigma$
and so continuous.
In particular, we have $(\eta,f) = 0$ in $(\CC\CD)^*$.
\end{lem}

Now it remains to prove Lemma \ref{lem:bdy-eta=0}. (In fact, we have only to establish
just near $\{z_0\}$ since the support of $(\eta,f)$ is concentrated at a point $z_0$.)
Our primary goal is to prove $(\eta,f)$ is a distributional solution of
$$
(D_w\Upsilon(J,(j,w)))^\dagger (\eta,f) = 0
$$
\emph{on an open set including $z_0$} (and so
on the whole $\Sigma$) by the same reason. The rest of the section will be occupied by
the proof of this goal.

We start with \eqref{eq:bdy-coker}
$$
\langle
D_w\Upsilon(Y), (\eta,f) \rangle + \langle Y,
\delta_{z_0}X_p \rangle = 0
$$
for all $Y \in C^{\infty}$ satisfying \eqref{eq:bdy-condition}, $Y(\del \dot \Sigma) \subset TR$.
Again knowing that
$$
\supp (\eta,f) \subset \{z_0\},
$$
we can  simplify the expression of the
pairing $\langle D_w\Upsilon(Y), (\eta,f) \rangle$ to
\be\label{eq:-bdy-coker-simple}
\langle \delbar^\pi Y^\pi, \eta \rangle + \langle -\Delta (\lambda(Y))\, dA
+ d((Y^\pi \rfloor d\lambda) \circ j, f \rangle
+ \langle Y, \delta_{z_0} X_p \rangle = 0
\ee
for all $Y$ satisfying $Y(\del \dot \Sigma) \subset T R$.

Now the boundary analog to Lemma \ref{lem:mid-summand} involves the boundary contribution which is
again by integration by parts combined with $\lambda(Y) \equiv 0$ on $\del \dot \Sigma$ by
the Legendrian boundary condition of $Y$.
(\emph{This is different from \eqref{eq:Green's} in that $\lambda(Y) = 0$ here while it
was arbitrary therein.})

\begin{lem}\label{lem:-bdy-mid-summand} We have
\beastar
&{}& \langle -\Delta (\lambda(Y)) dA
+ d((Y^\pi \rfloor d\lambda) \circ j, f \rangle\\
& = & - \int \lambda(Y)\, \Delta f \, dA + \int df \circ j \wedge (Y^\pi \rfloor d\lambda)
+ \int_{\del \dot \Sigma} f \frac{\del}{\del \nu}(\lambda(Y))\, d\theta  - f\, Y^\pi \rfloor d\lambda.
\eeastar
\end{lem}

Now we derive the boundary analogs to \eqref{eq:Reeb-term} and \eqref{eq:Xi-term} respectively:
\bea
0 &= &- \int \lambda(Y)\, \Delta f \, dA  + \int_{\del \dot \Sigma} f \frac{\del}{\del \nu}(\lambda(Y))\, d\theta \nonumber\\
&{}& +\langle  \lambda(Y)\, R_\lambda, \delta_{z_0} X_p \rangle
\label{eq:bdy-Reeb-term}\\
0 & = & \int \langle  \delbar^\pi Y^\pi, \eta \rangle
  + \int df \circ j \wedge (Y^\pi \rfloor d\lambda) -\int f\, Y^\pi \rfloor d\lambda \nonumber \\
&{}& \quad + \langle Y^\pi , \delta_{z_0} X_p \rangle
   \label{eq:bdy-Xi-term}.
 \eea
Again by replacing $Y$ by $\widetilde Y$ as before, we have now derived the following
boundary analogs to  \eqref{eq:Reeb-term} and \eqref{eq:Xi-term}
$$
\langle \delbar^\pi_J Y^\pi, \eta \rangle + \int df \circ j \wedge (Y^\pi \rfloor d\lambda) -\int f\, Y^\pi \rfloor d\lambda= 0
$$
and
$$
\int \lambda(Y) \Delta f\, dA + \int_{\del \dot \Sigma} f \frac{\del}{\del \nu}(\lambda(Y))\, d\theta = 0
$$
respectively for all $Y$ satisfying $Y(\del \dot \Sigma) \subset R$. Knowing that $Y^\pi$ and $\lambda(Y)$ are
independent functions, we have derived the equation
$$
\Delta f = 0, \quad f|_{\del \dot \Sigma} = 0.
$$
By the elliptic regularity, $f$ is continuous and hence $f \equiv 0$. (Recall $\supp f \subset \{p\}$.
Substituting this into the first, we get the equation for $\eta$ which satisfies
$$
(\delbar^\pi)^\dagger \eta = 0, \quad \eta|_{\del \dot \Sigma} \perp TR
$$
which is again an elliptic boundary value problem. Therefore $\eta$ is also smooth at $z_0$ and hence continuous.
Again we conclude $\eta \equiv 0$. Combining the two, we have finished the proof of Proposition \ref{prop:bdy-0jet-dense}.
\end{proof}

This in turn finishes the proof of Proposition \ref{prop:bdy-0jet}. \end{proof}

\appendix

\section{Generic nondegeneracy of Reeb chords}
\label{sec:nondegeneracy-chords}

Let $(M,\xi)$ be a contact manifold and $(R_0,R_1)$ be a pair of
Legendrian submanifolds.

We consider contact triads $(M,\lambda,J)$ and  consider the boundary value problem
for $(\gamma, T)$ with $\gamma:[0,1] \to M$
\be\label{eq:chord-equation}
\begin{cases}
\dot \gamma(t) = T R_\lambda(\gamma(t)),\\
\gamma(0) \in R_0, \quad \gamma(1) \in R_1.
\end{cases}
\ee
First we introduce the following nondegeneracy definition.
\begin{defn}\label{defn:nondegeneracy-chords} We say a Reeb chord $(\gamma, T)$ of $(R_0,R_1)$ is nondegenerate if
the linearization map $\Psi_\gamma = d\phi^T(p): \xi_p \to \xi_p$ satisfies
$$
\Psi_\gamma(T_{\gamma(0)} R_0) \pitchfork T_{\gamma(1)} R_1  \quad \text{\rm in }  \,  \xi_{\gamma(1)}.
$$
\end{defn}

\begin{rem} In \cite{oh:entanglement1}, the notion of \emph{Reeb trace} denoted by $Z_R$
of a Legendrian submanifold is introduced
$$
Z_R: = \bigcup_{t \in \R} \phi_{R_\lambda}^t(R)
$$
which is an immersed submanifold of dimension $\dim R + 1$. Then the above nondegneracy is
equivalent to the transversal intersection property
$$
\Psi_\gamma(T_{\gamma(0)} R_0) \pitchfork T_{\gamma(1)} Z_{R_1} \quad \text{\rm in}
\quad T_{\gamma(1)}M.
$$
\end{rem}

Similarly as in the problem of closed Reeb chords, we first consider the following
relative version of Reeb spectrum.

\begin{defn}\label{defn:spectrum} Let $\lambda$ be a contact form of contact manifold $(M,\xi)$ and $R \subset M$ a
connected Legendrian submanifold.
Denote by $\frak{Reeb}(M,\lambda)$ (resp. $\frak{Reeb}(M,R;\lambda)$) the set of closed Reeb chords
(resp. the set of self Reeb chords of $R$).
\begin{enumerate}
\item
We define $\operatorname{Spec}(M,\lambda)$ to be the set
$$
\operatorname{Spec}(M,\lambda) = \left\{\int_\gamma \lambda \mid \lambda \in \frak Reeb(M,\lambda)\right\}
$$
and call the \emph{action spectrum} of $(M,\lambda)$.
\item We define the \emph{period gap} to be the constant given by
$$
T(M,\lambda): = \inf\left\{\int_\gamma \lambda \mid \lambda \in \frak Reeb(M,\lambda)\right\} > 0.
$$
\end{enumerate}
We define $\operatorname{Spec}(M,R;\lambda)$ and the associated $T(M,\lambda;R)$ similarly using the set
$\frak{Reeb}(M,R;\lambda)$ of Reeb chords of $R$.
\end{defn}
We set $T(M,\lambda) = \infty$ (resp. $T(M,\lambda;R) = \infty$) if there is no closed Reeb orbit (resp. no $(R_0,R_1)$-Reeb chord). Then we define
\be\label{eq:TMR}
T_\lambda(M;R): = \min\{T(M,\lambda), T(M,\lambda;R)\}
\ee
and call it the \emph{(chord) period gap} of $R$ in $M$.

We denote by
$$
{\mathcal Leg}(M,\xi)
$$
the set of Legendrian submanifold and by ${\mathcal Leg}(M,\xi;R)$ its connected component
containing $R \in {\mathcal Leg}(M,\xi)$, i.e, the set of Legendrian submanifolds Legendrian isotopic to
$R$. We denote by
$$
\CP({\mathcal Leg}(M,\xi))
$$
the monoid of Legendrian isotopies $[0,1] \to {\mathcal Leg}(M,\xi)$. We have
natural evaluation maps
$$
\ev_0, \, \ev_1:\CP({\mathcal Leg}(M,\xi)) \to {\mathcal Leg}(M,\xi)
$$
and denote by
$$
\CP({\mathcal Leg}(M,\xi), R) = \ev_0^{-1}(R) \subset \CP({\mathcal Leg}(M,\xi))
$$
and
$$
\CP({\mathcal Leg}(M,\xi), (R_0,R_1)) = (\ev_0\times \ev_1)^{-1}(R_0,R_1) \subset \CP({\mathcal Leg}(M,\xi)).
$$
Finally we vary $\lambda$ $\mathfrak{Reeb}(M;\lambda)$
(resp. $\mathfrak{Reeb}(M,R;\lambda)$) for the given $(M,\xi)$ (resp. $((M,R),\xi)$) and form the
union
\be\label{eq:Reeb-xi}
\mathfrak{Reeb}(M,\xi) = \bigcup_{\lambda \in \mathfrak{Cont}(M,\xi)} \mathfrak{Reeb}(M;\lambda)
\ee
and
\be\label{eq:Reeb-Rxi}
\mathfrak{Reeb}(M,R, \xi) = \bigcup_{\lambda \in \mathfrak{Cont}(M,R, \xi)}
\mathfrak{Reeb}(M, R, \lambda).
\ee
\subsection{Under the perturbation of contact forms}
\label{subsec:Reeb-chords-lambda}

In this subsection, we prove the following relative version of Theorem \ref{thm:ABW}.

\begin{thm} \label{thm:Reeb-chord-lambda}
Let $(M,\xi)$ be a contact manifold. Let  $(R_0,R_1)$ be a pair of Legendrian submanifolds
allowing the case $R_0 = R_1$.  There
exists a residual subset $\operatorname{Cont}^{\text{\rm reg}}_1(M,\xi) \subset \CC(M,\xi)$
such that for any $\lambda \in \operatorname{Cont}^{\text{\rm reg}}_1(M,\xi)$ all
Reeb chords from $R_0$ to $R_1$ are nondegenerate for $T > 0$ and
Bott-Morse nondegenerate when $T = 0$.
\end{thm}

The case $R_0 = R_1$ with $T = 0$ is easy to prove which we omit referring its details
to \cite{oh:contacton-gluing}.  This being mentioned, we will focus on the case
$R_0 \cap R_1 = \emptyset$ in the following discussion.

Denote by $\LL(M;R_0,R_1)$ the space of paths
$$
\gamma: ([0,1], \{0,1\}) \to (M;R_0,R_1).
$$
We consider the assignment
\be\label{eq:Phi-TR}
\Phi: (T,\gamma,\lambda) \mapsto \dot \gamma - T \,R_\lambda(\gamma)
\ee
as a section of the Banach vector bundle over
$$
(0,\infty) \times \CL^{1,2}(M;R_0,R_1) \times \Cont(M,\xi)
$$
where $\CL^{1,2}(M;R_0,R_1)$
is the $W^{1,2}$-completion of $\CL(M;R_0,R_1)$. We have
$$
\dot \gamma - T\, R_\lambda(\gamma) \in \Gamma(\gamma^*TM; T_{\gamma(0)}R_0, T_{\gamma(1)}R_1).
$$
We  define the vector bundle
$$
\CL^2(R_0,R_1) \to (0,\infty) \times \CL^{1,2}(M;R_0,R_1) \times \Cont(M,\xi)
$$
whose fiber at $(T,\gamma,\lambda)$ is $L^2(\gamma^*TM)$. We denote by
$\pi_i$, $i=1,\, 2, \, 3$ the corresponding projections as before.

We denote $\frak{Reeb}(M,\lambda;R_0,R_1) = \Phi_\lambda^{-1}(0)$,
where
$$
\Phi_\lambda: = \Phi|_{ (0,\infty) \times \CL^{1,2}(M;R_0,R_1) \times \{\lambda\}}.
$$
Then by definition \eqref{eq:Reeb-xi}, we have
$$
\frak{Reeb}(\lambda;R_0,R_1) =  \Phi_\lambda^{-1}(0) = \frak{Reeb}(M,\xi) \cap \pi_3^{-1}(\lambda).
$$
\begin{prop} \label{prop:nondegeneracy-chords} Suppose $R_0 \cap R_1 = \emptyset$.
A Reeb chord $(\gamma, T)$ of $(R_0,R_1)$ is nondegenerate if and only if
the linearization
$$
d_{(\gamma, T)}\Phi: \R \times W^{1,2}(\gamma^*TM;T_{\gamma(0)}R_0,T_{\gamma(1)}R_1)
\to L^2(\gamma^*TM)
$$
is surjective.
\end{prop}
\begin{proof} We have the formula for $d_{(\gamma, T)}\Phi$ from \eqref{eq:Phi-TR}
$$
d_{(\gamma, T)}\Phi(a,\xi) = \frac{D\xi}{dt} - TDR_\lambda(\gamma)(\xi) - a R_\lambda(\gamma)
$$
acting on $\xi$ satisfying the boundary condition
$$
\xi(0) \in T_{\gamma(0)} R_0, \quad \xi(1) \in T_{\gamma(1)}R_1.
$$
Then  by the Fredholm alternative, we derive the $L^2$-cokernel of the operator
$d_{(\gamma, T)}\Phi$ is given by
\bea
\ker (d_{(\gamma, T)}\Phi)^\dagger &= & \Big \{ \eta \in \Gamma(\gamma^*TM) \mid  \nonumber \\
&{}& \quad \begin{cases}
\frac{D\eta}{d t}  + T\, DR_\lambda(\gamma)^\dagger \eta = 0, \label{eq:cokernel-1}
\\
\eta(0) \in N_{\gamma(0)}R_0, \quad \eta(1) \in N_{\gamma(1)}R_1
\end{cases}\\
&{}& \quad \int_0^1 \langle a R_\lambda(\gamma(t)),\eta(t)\rangle \, dt = 0
\forall a \in \R \Big\}. \label{eq:cokernel-2}
\eea

We first derive the following lemma.
\begin{lem} We have
$$
DR_\lambda(\gamma)^\dagger = J(\gamma) DR_\lambda(\gamma) J(\gamma)
$$
pointwise, where $J \in \operatorname{End}(TM)$ is given by $J = J_\xi \oplus id$
with respect to the splitting $TM = \xi \oplus \span\{R_\lambda\}$.
\end{lem}
\begin{proof} Since $\phi^t$ preserves $\lambda$, we obtain
$$
(\phi^t)^*d\lambda = d\lambda
$$
i.e., we have
$$
d\lambda(\phi^t(p))(d\phi^t(v_1),d\phi^t(v_2)) = d\lambda(p)(v_1,v_2).
$$
Regard $t \mapsto d_x\phi^t$ as a section of
$\operatorname{Hom}(T_xM, T_{\phi^{(\cdot)}(x)}M)\to \R$
of the vector bundle over $\R$. Then by
taking the covariant derivative with respect to the connection $\nabla$ preserving
$J$ and $d\lambda$ and utilizing the identity
$$
d(\phi^t)^{-1}  \frac{D}{dt}d\phi^t + \frac{D}{dt}d\phi^t d(\phi^t)^{-1}  = 0,
$$
we obtain
$$
d\lambda\left(\frac{D}{dt}d\phi^t (v_1), d\phi^t(v_2)\right) + d\lambda\left(d\phi^t (v_1), \frac{D}{dt}d\phi^t(v_2)\right) = 0.
$$
for all $v_1, \, v_2$. Therefore  and so
$$
d\lambda\left(\frac{D}{dt}d\phi^t\circ (d\phi^t)^{-1} (v_1), v_2\right) + d\lambda\left(v_1, \frac{D}{dt}d\phi^t\circ (d\phi^t)^{-1}(v_2)\right) = 0.
$$
Since we have $DR_\lambda(p)(v) =\frac{D}{dt}d\phi^t(p)\circ (d\phi^t)^{-1}(v)$ by definition,
we obtain
$$
d\lambda(DR_\lambda(p)(v_1), v_2) + d\lambda(v_1,DR_\lambda(p)(v_2)) = 0.
$$
In terms of the metric $g= d\lambda(\cdot, J_\xi\cdot)$, this can be rewritten as
$$
- g(DR_\lambda(p)(v_1),J_\xi v_2) + g(Jv_1, DR_\lambda(p)(v_2)) = 0.
$$
Hence we have
$$
g(DR_\lambda(p)(v_1),J_\xi v_2) = - g(v_1, JDR_\lambda(p)(v_2))
$$
By setting $v_2' = J_\xi v_2$, this is equivalent to
$$
g(DR_\lambda(p)(v_1),v_2') = g(v_1, JDR_\lambda(p)J(v_2')).
$$
This proves $ DR_\lambda(p)^\dagger = J(p)DR_\lambda(p)J(p)$.
\end{proof}

Using this we derive

\begin{lem} For any $\eta \in \ker (d_{(\gamma, T)}\Phi)^\dagger$, we have
$\eta(t) \perp R_\lambda(\gamma(t))$ for all $t \in [0,1]$.
\end{lem}
\begin{proof} By the hypothesis $R_0 \cap R_1 = \emptyset$,  $\gamma$ cannot be
a closed orbit and there exists an interval $I$ open in $[0,1]$ such that
$$
\# \gamma^{-1}(t) \equiv 1 \mod 2
$$
for all $t \in I$. Note that we can choose $I$ so that it is either $I = [0, b)$ or $I = (a, 1]$.
Then \eqref{eq:cokernel-2} implies $\eta(t) \perp R_\lambda(\gamma(t))$ for all $t \in I$.

On the other hand, using \eqref{eq:cokernel-1}, we compute
$$
\frac{d}{dt} \langle R_\lambda(\gamma(t)), \eta(t) \rangle = \langle R_\lambda(\gamma(t)),
\frac{D \eta}{dt} \rangle = \langle R_\lambda(\gamma(t)), T DR_\lambda(\gamma)^\dagger \eta(t) \rangle
$$
for all $t \in [0,1]$, with respect to the contact triad connection $\nabla$.
We write
$$
\eta(t) = \eta^\pi(t) + \lambda(\eta(t)) R_\lambda(\gamma(t))
$$
and recall $\nabla_{R_\lambda} (\xi) \subset (\xi)$ and $\nabla_{R_\lambda} R_\lambda = 0$.
Then, substituting $DR_\lambda(\gamma)^\dagger = J DR_\lambda(\gamma) J$ on $\xi$ and noting
$DR_\lambda(\gamma) (\xi) \subset \xi$, we derive that the operator $DR_\lambda(\gamma)^\dagger$
preserves the splitting
$TM = \xi \oplus \R \langle R_\lambda \rangle$.
Then we rewrite
\beastar
\langle R_\lambda(\gamma(t)), T DR_\lambda(\gamma)^\dagger \eta(t) \rangle
& = & \langle R_\lambda(\gamma(t)), T \lambda(\eta) R_\lambda(\gamma(t)) \rangle\\
& = & T \lambda(\eta)  = T \langle R_\lambda, \eta \rangle.
\eeastar
In conclusion, the function $g(t): = \langle R_\lambda(\gamma(t)), \eta (t) \rangle$ satisfies
the linear 1-st order ODE
\be\label{eq:dgdt}
\dot g(t) - T g (t)= 0.
\ee
On the other hand, on $I$, we have $\lambda(\eta(t)) = \langle R_\lambda(\gamma(t)), \eta(t) \rangle = 0$ for
all $t \in I$. This implies $g(t) \equiv 0$ for all $t \in I$. Since $g$ satisfies \eqref{eq:dgdt}, this implies
$g(t) = 0$ for all $t \in [0,1]$, which finishes the proof of the lemma.
\end{proof}

This lemma implies $\eta(t) \in \xi_{\gamma(t)}$. Then
using the identity
$$
DR_\lambda(\gamma)^\dagger = J(\gamma)DR_\lambda(\gamma)J(\gamma)
$$
on $\xi$, it follows
$$
\frac{D\eta}{d t}  + T\,J DR_\lambda(\gamma) J \eta = 0, \quad \eta(t) \in \xi_{\gamma(t)}, \,  \eta(0)
\in N_{\gamma(0)}R_0 \cap \xi_{\gamma(0)}
$$
i.e.,
\be\label{eq:adjoint}
\frac{DJ\eta}{d t} - T DR_\lambda(\gamma) J \eta = 0, \quad \eta(t) \in \xi_{\gamma(t)}, \, \eta(0)
\in N_{\gamma(0)}R_0 \cap \xi_{\gamma(0)}.
\ee
We consider the family
$$
v(t) = (d\phi^{Tt})^{-1} (\gamma(t)) J(\gamma)\eta(t)) \in \xi_p
$$
and differentiate
\beastar
\frac{d v}{dt}& = &(d\phi^{Tt})^{-1} \frac{D (J \eta)}{dt} - T (d\phi^{Tt})^{-1} \frac{D (d\phi^{Tt})}{dt} (d\phi^t)^{-1}(J \eta(t))\\
& = & (d\phi^{Tt})^{-1} \left(\frac{D (J\eta)}{dt} - T \frac{D (d\phi^{Tt})}{dt} (d\phi^{Tt})^{-1}(J \eta(t))\right).
\eeastar
But by definition, we have
$$
\frac{D (d\phi^{Tt})}{dt} (d\phi^{Tt})^{-1} = DR_\lambda(\gamma(t))
$$
and hence we obtain $\frac{d v}{dt} \equiv 0$. Therefore we have
\be\label{eq:eigenvector}
v(1) = v(0),\,  \text{\rm i.e.,} \,  (d\phi^T)^{-1}(J \eta(1)) = J \eta(0).
\ee
Since $\eta(0) \in N_{\gamma(0)}R_0 \cap \xi_{\gamma(0)}$ and
$\eta(1) \in N_{\gamma(1)}R_0 \cap \xi_{\gamma(1)}$, we have
$$
J\eta(0) \in T_{\gamma(0)} R_0, \quad J\eta(1) \in T_{\gamma(1)}R_1.
$$
Then \eqref{eq:eigenvector} shows that
$$
\ker DR_\lambda(\gamma)^\dagger = 0
$$
is equivalent to
$$
d\phi^T(T_{\gamma(0)}R_0) \cap T_{\gamma(1)} R_1 = \{0\}
$$
which is equivalent to saying that $\gamma$ is a nondegenerate Reeb chord
from $R_0$ to $R_1$. The converse also holds by reading the above proof backwards.
This finishes the proof of Proposition \ref{prop:nondegeneracy-chords}.
\end{proof}

Motivated by Proposition \ref{prop:nondegeneracy-chords},
we now consider the full derivative $d\Phi$.
It remains to compute $d_\lambda \Phi$. For this purpose, we recall the defining equation
of $R_\lambda$:
$$
X \rfloor \lambda = 1, \, X \rfloor d\lambda = 0
$$
Consider the small perturbation $\lambda_\e = \lambda + \e \mu$ and write the
corresponding Reeb vector field by $R_{\lambda_\e} = R_\lambda + \e Y (\mod o(\e))$.
Then we have
$$
(R_\lambda + \e Y) \rfloor (\lambda + \e \mu) = 1, \, (R_\lambda + \e Y)
\rfloor d(\lambda + \e \mu)=0 \quad \mod o(\e).
$$
By collecting the terms of order $\e$, we obtain
$$
Y \rfloor \lambda + R_\lambda \rfloor \mu = 0, \, R_\lambda \rfloor d\mu
+ Y \rfloor d\lambda = 0.
$$
Hence the variation $\delta_\lambda R_\lambda (\mu) =: Y_\mu$ is uniquely determined by the
equation
\be\label{eq:Ymu}
Y_\mu \rfloor \lambda = - R_\lambda \rfloor \mu, \quad  Y_\mu \rfloor d\lambda = -  R_\lambda \rfloor d\mu.
\ee
Now we are ready to study $d_\lambda\Phi(\mu)$. We have
$$
d_\lambda\Phi(\mu) =  -T\, Y_\mu(\gamma).
$$
Therefore if $\eta \in \coker (d\Phi(\gamma))$, we must have
$$
\int_0^1 \langle Y_\mu (\gamma(t)), \eta(t) \rangle\, dt = 0
$$
for all $\mu$. Since $\eta(t) \in \xi_{\gamma(t)}$, we have
$\langle Y_\mu (\gamma(t)), \eta(t) \rangle = d\lambda(Y_\mu(t),J_\xi(\gamma(t))\eta(t))$.
And \eqref{eq:Ymu} implies
$$
d\lambda(Y_\mu(t),J_\xi(\gamma(t))\eta(t)) = - d\mu(R_\lambda(\gamma),J_\xi(\gamma)\eta).
$$
Therefore we have
\be\label{eq:intvanishing}
0= \int_0^1 \langle Y_\mu (\gamma(t)), \eta(t) \rangle\, dt
= -\int_0^1 d\mu(R_\lambda(\gamma(t)),J_\xi(\gamma(t))\eta(t))\, dt
\ee
for any one-form $\mu$. Now the following lemma will
finish the proof.

\begin{lem} Let $q \in M$ and consider $\xi_q \subset T_q M$. Denote by
$\{R_\lambda(q)\}^\perp \subset T_q^*M$ the annihilator of $R_\lambda(q)$.
Then we have
$$
\{d\mu(R_\lambda(q),\cdot) \in T^*_q M \mid d\mu \in \Gamma(S^2(T_q^*M))
\} = \{R_\lambda(q)\}^\perp \cong \xi_q^*.
$$
\end{lem}
\begin{proof} Obviously we have
$$
\{d\mu(R_\lambda(q),\cdot) \in T^*_q M \mid d\mu \in \Gamma(S^2(T_q^*M))
\} \subset \{R_\lambda(q)\}^\perp.
$$
For the opposite inclusion, it is enough to note in terms of
local coordinates that for any nonzero vector
$v \in \R^n$, the map
$$
A \mapsto Av; \, \quad \Lambda^2(\R^n) \to \R^n
$$
is surjective. Here $\Lambda^2(\R^n)$ is the set of skew-symmetric
matrices and $n \geq 2$. This finishes the proof.
\end{proof}

Once we have this lemma, we can conclude \eqref{eq:intvanishing}
and the unique continuation for the equation \eqref{eq:adjoint} imply $\eta \equiv 0$.
This finishes the proof of the theorem. \qed

For the later purpose, we also need the following theorem.

\begin{thm}\label{thm:opneN} Let
 $\Cont^{\text{\rm reg}}(M,\xi;R_0,R_1;< N)$ be the set
of $\lambda$'s such that all $\lambda$-Reeb chords $\gamma$ from $R_0$ to $R_1$
with $\CA_\lambda(\gamma) < N$. Then
it is open in $\Cont(M,\xi)$ for each given $N > 0$.
\end{thm}
\begin{proof} Consider the two projection
$$
\xymatrix{& \ar[dl]_{\Pi_1} \Phi^{-1}(0) \ar[dr]^{\Pi_3}& \\
\R_+ & &\Cont(M,\xi;R_0,R_1)}
$$
where $\Pi_i = \pi_i|_{\Phi^{-1}(0)}$. We denote
$$
\frak{Reeb}(M,\xi; R_0,R_1;<N): = \Pi_1^{(0,N)}.
$$
Then $\Cont^{\text{\rm reg}}(M,\xi;R_0,R_1;N)$ is the set of regular values of the map
$$
\Pi_3|_{\frak{Reeb}(M,\xi; R_0,R_1;<N)} : \frak{Reeb}(M,\xi; R_0,R_1;< N) \to \Cont(M,\xi;R_0,R_1).
$$
Now let $\lambda \in \Cont^{\text{\rm reg}}(M,\xi;R_0,R_1;N)$. Then the set
$$
\frak{Reeb}(M,\xi; R_0,R_1;<N) \cap \Pi_3^{-1}(\lambda)
$$
is compact.
Therefore by the tube lemma, there exists an open neighborhood $\CV$ of $\lambda$ in $\Cont(M,\xi;R_0,R_1)$
such that all $\lambda'$-Reeb chords in $(\Pi_3|_{\frak{Reeb}(M,\xi; R_0,R_1;<N})^{-1}(\CV)$ are nondegenerate
and hence $\CV \subset \Cont^{\text{\rm reg}}(M,\xi;R_0,R_1;<N)$. This finishes the proof.
\end{proof}

\begin{thm} \label{thm:Reeb-chords-MB}
Let $(M,\xi)$ be a contact manifold. Let  $(R_0,R_1)$ be a pair of Legendrian submanifolds
allowing the case $R_0 = R_1$.
 For a given contact form $\lambda$, there exists a residual subset of pairs $(R_0,R_1)$
of Legendrian submanifolds such that  all
Reeb chords from $R_0$ to $R_1$ are nondegenerate for $T > 0$ and
Bott-Morse nondegenerate when $T = 0$.
\end{thm}

\subsection{Under the perturbation of boundaries}

In this section, we prove the following generic perturbation problem of the boundary
Legendrian submanifolds by transforming the problem to that of
perturbation of contact forms. Since we gave complete details of the proof of
Theorem \ref{thm:Reeb-chord-lambda}, we will just indicate the differences in the proof of
the following theorem therefrom.

\begin{thm}\label{thm:Reeb-chord-bdy} Let $(M,\xi)$ be a contact manifold,
$\lambda$ a contact form and $R_1 \in {\mathcal Leg}(M,\xi)$. Then there exists a residual subset
$$
R_0 \in{\mathcal Leg}^{\text{\rm reg}}(M,\xi) \subset {\mathcal Leg}(M,\xi)
$$
of Legendrian submanifolds such that for all $R_0 \in {\mathcal Leg}(M,\xi)$ all
Reeb chords from $R_0$ to $R_1$ are nondegenerate for $T > 0$ and Bott-Morse nondegenerate when $T = 0$.
\end{thm}
\begin{proof} This time we consider the fiber bundle $\CL^{1,2}(M;R_1)$ over
$$
(0,\infty) \times {\mathcal Leg}(M,\xi)
$$
whose fiber at $(T,R_0)$ is given by
$$
\CL^{1,2}_{(T,R_0)}(M;R_1) = \CL^{1,2}(M;R_0,R_1).
$$
Then we consider the assignment
$$
\Phi: (T,\gamma, R_0) \mapsto \dot \gamma - T R_\lambda(\gamma)
$$
as a section of the Banach vector bundle
$$
\CL^2(M;R_1) \to \CL^{1,2}(M;R_1)
$$
whose fiber at $(T,\gamma,R_0)$ is given by the vector space
$$
L^2(\gamma^*TM, T_{\gamma(0)}R_0, T_{\gamma(1)}R_1).
$$
Now we consider a perturbation of $R_0$ under the contact isotopy of the type
$$
\psi(R_0), \quad \psi = \psi_H^1 \in \Cont_0(M,\xi).
$$
Then for given $\gamma \in \CL^{1,2}(M;\psi(R_0),R_1)$, the composition path
$$
\widetilde \gamma(t): = (\psi_H^t)^{-1}(\gamma(t))
$$
satisfies the perturbed equation
$$
\begin{cases}
\dot {\widetilde \gamma}(t) = - X_H(\widetilde \gamma(t)) + (\psi_H^t)^*R(\widetilde \gamma(t)\\
\widetilde \gamma(0) \in R_0, \quad \widetilde \gamma(1) \in R_1
\end{cases}
$$
with \emph{fixed} boundary condition. Then we can duplicate the proof of Theorem \ref{thm:Reeb-chord-lambda}
by replacing perturbation of $\lambda$ by that of Hamiltonian $H$ above
with almost same kind computation and so we omit the details.
This then finishes the proof of
Theorem \ref{thm:Reeb-chord-bdy}.
\end{proof}

\bibliographystyle{amsalpha}

\bibliography{biblio2}

\end{document}